\documentclass[a4paper,12pt]{article}
\usepackage{amssymb,amsmath,amsthm,times,mathrsfs,fullpage}
\usepackage[pdftex]{hyperref}
\hypersetup{plainpages=True, pdfstartview=FitH, bookmarksopen=true,
colorlinks=true,linkcolor=blue,citecolor=blue}
\usepackage{tikz,pgf}
\usepackage{epstopdf}
\usepackage{enumerate}
\usepackage{array}
\usepackage{cite}
\usepackage{booktabs}
\usepackage{lscape}
\usepackage{pdflscape} 

\usepackage{todonotes}

\setlength{\marginparwidth}{2.2cm}

\newcommand{\Ai}{\text{\normalfont Ai}}

\newcommand{\claw}{\ensuremath{\xrightarrow{\mathcal{L}}}}

\def\pf{\noindent \emph{Proof.}\ }

\providecommand{\keywords}[1]
{
  \small	
  \noindent
  \textbf{{Keywords:}} #1
}

\begin{document}

\newtheorem{thm}{Theorem}[section]
\newtheorem{cor}[thm]{Corollary}
\newtheorem{lmm}[thm]{Lemma}
\newtheorem{conj}[thm]{Conjecture}
\newtheorem{pro}[thm]{Proposition}
\newtheorem{df}[thm]{Definition}
\theoremstyle{remark}
\newtheorem{rem}[thm]{Remark}

\newcommand*\samethanks[1][\value{footnote}]{\footnotemark[#1]}
\newcommand*{\myand}{\,\and\,}

\title{\textbf{Enumerative and Distributional Results for $d$-combining Tree-Child Networks}}
\author{Yu-Sheng Chang\thanks{Department of Mathematical Sciences, National Chengchi University, Taipei 116, Taiwan.}
\myand
Michael Fuchs\samethanks
\myand
Hexuan Liu\thanks{Department of Pure and Applied Mathematics, Graduate School of Fundamental Science and Engineering, Waseda University, 169-8555 Tokyo, Japan.}
\myand
Michael Wallner\thanks{Institute for Discrete Mathematics and Geometry, TU Wien, 1040 Vienna, Austria.}
\myand
Guan-Ru Yu\thanks{Department of Applied Mathematics, National Sun Yat-sen University, Kaohsiung 804, Taiwan.}}
\date{\today}
 \maketitle

\begin{abstract}
Tree-child networks are one of the most prominent network classes for modeling evolutionary processes which contain reticulation events.
Several recent studies have addressed counting questions for {\it bicombining tree-child networks} in which every reticulation node has exactly two parents.
We extend these studies to {\it $d$-combining tree-child networks} where every reticulation node has now $d\geq 2$ parents,
and we study one-component as well as general tree-child networks.
For the number of one-component networks, we derive an exact formula from which asymptotic results follow that contain a stretched exponential for $d=2$, yet not for $d \geq 3$.
For general networks, we find a novel encoding by words which leads to a recurrence for their numbers.
From this recurrence, we derive asymptotic results which show the appearance of a stretched exponential for all $d \geq 2$.
Moreover, we also give results on the distribution of shape parameters (e.g., number of reticulation nodes, Sackin index) of a network which is drawn uniformly at random from the set of all tree-child networks with the same number of leaves.
We show phase transitions depending on $d$, leading to normal, Bessel, Poisson, and degenerate distributions.
Some of our results are new even in the bicombining case.

\thispagestyle{empty}
\end{abstract}

\keywords{%
Phylogenetic network,
tree-child network,
exact enumeration,
asymptotic enumeration,
stretched exponential,
limit law,
phase transition
}


\section{Introduction and Results}

The evolutionary process of, e.g., chromosomes, species, and populations is not always tree-like due to the occurrence of reticulation events caused by meiotic recombination on the chromosome level,
specification and horizontal gene transfer on the species level,
and sexual recombination on the population level. Because of this, {\it phylogenetic networks} have been introduced as appropriate models for reticulate evolution. Studying the properties of these networks is at the moment one of the most active areas of research in phylogenetics; see \cite{HuRuSc,St}.

While algorithmic and combinatorial aspects of phylogenetic networks have been investigated now for a few decades, enumerating and counting phylogenetic networks as well as understanding their ``typical shape'' are relatively recent areas of research; see~\cite[page~253]{St} where such questions are only discussed in one short paragraph. However, the last couple of years have seen a lot of progress on these questions, in particular for the class of  {\it tree-child networks}, which is one of the most prominent subclasses amongst the many subclasses of phylogenetic networks; see \cite{CaZh,DiSeWe,FuGiMa1,FuGiMa2,FuHuYu,FuLiYu,FuYuZh,PoBa}.

Most of the studies on tree-child networks have focused on {\it bicombining tree-child networks} which are tree-child networks where every reticulation event involves exactly two individuals.
The purpose of this paper is to discuss extensions of previous results to {\it multicombining tree-child} networks. More precisely, we focus on {\it $d$-combining tree-child networks} which are tree-child networks whose reticulation events involve exactly $d\geq 2$ individuals. In addition to being interesting combinatorial objects, $d$-combining networks are relevant because their investigation leads to an understanding of the particularity of the bicombining case (which is the most important case in applications). More precisely, we are going to see that the two cases $d=2$ and $d>2$ frequently exhibit a very different behavior.

Before explaining our results, we give precise definitions and fix some notation. We start with the definition of phylogenetic networks.

\begin{figure}
\centering
\includegraphics[scale=0.9]{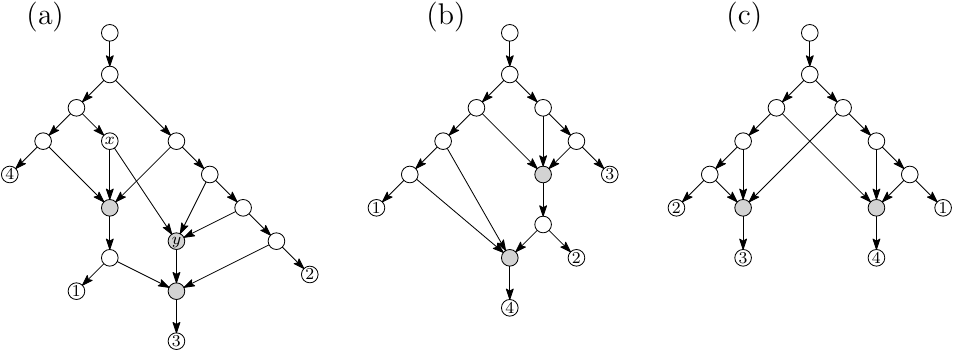}
\caption{(a) A $3$-combining phylogenetic network which is \emph{not} a tree-child network (because both children of the tree node~$x$ are reticulation nodes and the only child of the reticulation node~$y$ is also a reticulation node); (b) a $3$-combining tree-child network; (c) a $3$-combining one-component tree-child network.}\label{pn-fig}
\end{figure}

\begin{df}[Phylogenetic network]
A (rooted) phylogenetic network with $n$ leaves is a  simple, directed acyclic graph (DAG) with no nodes of in- and out-degree~$1$, a (unique) node of in-degree $0$ and out-degree $1$ (the root) and exactly $n$ nodes of in-degree $1$ and out-degree $0$ (the leaves) which are bijectively labeled with labels from the set $\{1,\ldots,n\}$.
\end{df}

This definition is very general. In the sequel, we restrict ourselves to the above mentioned $d$-combining networks where $d\geq 2$ is a fixed integer. ($d=2$ is the bicombining case.)

\begin{df}[$d$-combining network]
A phylogenetic network is a $d$-combining network if all internal nodes (i.e. nodes which are neither leaves nor the root) have either in-degree $1$ and out-degree $2$ ({\it tree nodes}) or in-degree $d$ and out-degree $1$ ({\it reticulation nodes}).
\end{df}

See Figure~\ref{pn-fig} for examples with $d=3$.
We next recall the definition of tree-child networks.

\begin{df}[Tree-child network]
A $d$-combining network is called a tree-child network if every non-leaf node has at least one child which is not a reticulation node.
\end{df}

In other words, a $d$-combining network is a tree-child network if (a) the root is not followed by a reticulation node; (b) a reticulation node is not followed by another reticulation node; and (c) a tree node has at least one child which is not a reticulation node; see Figure~\ref{pn-fig}, (b) for an example.
A simple and important subclass of tree-child networks is the class of one-component tree-child networks; see the definition below and  Figure~\ref{pn-fig}, (c) for an example.

\begin{df}[One-component tree-child network]
A tree-child network is called a one-component tree-child network if every reticulation node is directly followed by a leaf.
\end{df}

One-component networks are more ``tree-like'' than general tree-child networks. Moreover, they constitute an important building block in the construction of general tree-child networks with the component graph method; see \cite{CaZh} for the bicombining case and Appendix~\ref{App-B} for the $d$-combining case. (The name also comes from this method since one-component tree-child networks are those tree-child networks which have only one non-trivial tree-component; for details see Appendix~\ref{App-B}.)

In this paper, we give exact and asymptotic counting results for the number of one-component and general $d$-combining tree-child networks. Moreover, we investigate the number of reticulation nodes (and other parameters) of a {\it random network} where random here means that the network is picked with the uniform distribution. We detail some of our results below and give more results in the subsequent sections.

In order to state our results, we need some notation. We denote by $\mathrm{OTC}^{(d)}_{n,k}$ and $\mathrm{TC}^{(d)}_{n,k}$ the number of one-component and general $d$-combining tree-child networks with $n$ leaves and $k$ reticulation nodes, respectively.
Note that the tree-child property implies that $k\leq n-1$. Thus, the total number of one-component and general $d$-combining tree-child networks, denoted by $\mathrm{OTC}^{(d)}_n$ and $\mathrm{TC}^{(d)}_n$, satisfy
\[
\mathrm{OTC}^{(d)}_n=\sum_{k=0}^{n-1}\mathrm{OTC}^{(d)}_{n,k}\qquad\text{and}\qquad\mathrm{TC}^{(d)}_n=\sum_{k=0}^{n-1}\mathrm{TC}^{(d)}_{n,k}.
\]

Now, we are ready to present our results.
First, for one-component tree-child networks, we extend the exact formula for $\mathrm{OTC}_{n,k}^{(d)}$ for $d=2$ from \cite[Theorem~13]{CaZh} to arbitrary $d\geq 2$ in Theorem~\ref{formula-OTC}. From this extension, we then derive the following asymptotic counting results.

\begin{thm}\label{Th-1}
The following asymptotic equivalences hold for one-component $d$-combining tree-child networks.
\begin{itemize}
\item[(i)] For $d=2$ (bicombining case), we have
\[
\mathrm{OTC}^{(2)}_n\sim \frac{1}{4 \pi \sqrt{ e}} (n!)^2 2^{n} e^{2\sqrt{n}} n^{-9/4}.
\]
\item[(ii)] For $d=3$, we have
\[
\mathrm{OTC}^{(3)}_n
\sim I_1(2)\cdot\mathrm{OTC}^{(3)}_{n,n-1}\sim\frac{I_1(2)\sqrt{3}}{9\pi}(n!)^3\left(\frac{9}{2}\right)^nn^{-3},
\]
where
\[
I_v(a)=\left(\frac{a}{2}\right)^v\sum_{k=0}^{\infty}\frac{1}{k!\Gamma(k+v+1)}\frac{a^{2k}}{4^k}
\]
is the modified Bessel function of the first kind.
\item[(iii)] For $d\geq 4$, we have
\[
\mathrm{OTC}^{(d)}_n\sim\mathrm{OTC}^{(d)}_{n,n-1}\sim\frac{d!}{d^{d-1/2}(2\pi)^{(d-1)/2}}(n!)^{d}\left(\frac{d^d}{d!}\right)^nn^{3(1-d)/2}.
\]
\end{itemize}
\end{thm}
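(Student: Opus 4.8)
The plan is to first obtain a closed-form expression for the refined numbers $\mathrm{OTC}^{(d)}_{n,k}$ by extending Cardona and Zhang's formula (Theorem~13 in \cite{CaZh}) from $d=2$ to general $d$, and then to read off the asymptotics of $\mathrm{OTC}^{(d)}_n=\sum_{k=0}^{n-1}\mathrm{OTC}^{(d)}_{n,k}$ from it. Combinatorially, a one-component $d$-combining tree-child network with $k$ reticulations is assembled from a labeled tree skeleton carrying the $n-k$ non-reticulation leaves together with $k$ reticulation nodes, each of which receives a leaf child and has its $d$ parent edges attached to the skeleton; counting skeletons and ordered attachments produces a product of factorials and binomial coefficients for $\mathrm{OTC}^{(d)}_{n,k}$. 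The entire phase transition is then governed by where this summand concentrates as a function of $k$, which I would track through the rescaled quantities $\rho_{n,j}:=\mathrm{OTC}^{(d)}_{n,n-1-j}/\mathrm{OTC}^{(d)}_{n,n-1}$, so that $\mathrm{OTC}^{(d)}_n=\mathrm{OTC}^{(d)}_{n,n-1}\sum_{j\ge0}\rho_{n,j}$.

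Second, I would pin down the boundary term $\mathrm{OTC}^{(d)}_{n,n-1}$ (maximal reticulation number) directly from the exact formula via Stirling's approximation, yielding the shape $(n!)^d(d^d/d!)^n n^{3(1-d)/2}$ with the explicit constant $\tfrac{d!}{d^{d-1/2}(2\pi)^{(d-1)/2}}$ for $d\ge3$. The key structural computation is the consecutive ratio: a short calculation from the exact formula shows
\[
\frac{\rho_{n,j+1}}{\rho_{n,j}}=\frac{\mathrm{OTC}^{(d)}_{n,n-2-j}}{\mathrm{OTC}^{(d)}_{n,n-1-j}}\sim\frac{n^{\,3-d}}{(j+1)(j+2)}\qquad(n\to\infty),
\]
so that $\rho_{n,j}\sim n^{(3-d)j}/\bigl(j!\,(j+1)!\bigr)$ to leading order. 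The exponent $3-d$ is exactly the quantity whose sign flips at $d=3$, and this is the source of the three regimes.

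Third, I would treat the three cases from this single estimate. For $d\ge4$ we have $3-d<0$, so $\rho_{n,j}\to0$ for every fixed $j\ge1$ while $\rho_{n,0}=1$; bounding the tail by a summable majorant and invoking dominated convergence gives $\sum_{j}\rho_{n,j}\to1$, whence $\mathrm{OTC}^{(d)}_n\sim\mathrm{OTC}^{(d)}_{n,n-1}$. For $d=3$ we have $3-d=0$ and $\rho_{n,j}\to1/\bigl(j!\,(j+1)!\bigr)$, again dominated, so $\sum_j\rho_{n,j}\to\sum_{j\ge0}1/\bigl(j!\,(j+1)!\bigr)=I_1(2)$, recognizing the series as the stated modified Bessel value and giving $\mathrm{OTC}^{(3)}_n\sim I_1(2)\,\mathrm{OTC}^{(3)}_{n,n-1}$. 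For $d=2$ we have $3-d=1$, so $\rho_{n,j}\approx n^{j}/\bigl(j!\,(j+1)!\bigr)$ increases in $j$ up to $j^\ast\sim\sqrt n$: the summand is maximized in the \emph{interior}, and $\sum_{j\ge0}n^{j}/\bigl(j!\,(j+1)!\bigr)=n^{-1/2}I_1(2\sqrt n)\sim e^{2\sqrt n}/(2\sqrt\pi\,n^{3/4})$ by the large-argument asymptotics of $I_1$. Multiplying by the boundary term then produces the stretched exponential $e^{2\sqrt n}$ and the power $n^{-9/4}$ of case (i).

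The main obstacle is the bicombining case. Unlike $d\ge3$, no fixed finite set of terms dominates, so the leading-order ratio above is not sufficient: one must control $\rho_{n,j}$ uniformly over the whole window $j\lesssim\sqrt n$, including its second-order correction, since summing that correction across a window of width $\sqrt n$ contributes an $\LandauO(1)$ constant — this is where the factor $1/\sqrt e$ in case (i) originates. Concretely, I would either carry out a Laplace/saddle-point expansion of $\log\mathrm{OTC}^{(2)}_{n,k}$ around $k^\ast=n-\Theta(\sqrt n)$ to second order, or equivalently sharpen the $I_1(2\sqrt n)$ reduction with the subdominant term in $\rho_{n,j}$. Getting the constant $\tfrac{1}{4\pi\sqrt e}$ and the exact polynomial order $n^{-9/4}$ right requires Stirling beyond leading order together with a careful Euler--Maclaurin (Gaussian-sum) step, and this bookkeeping is the technically delicate part of the proof.
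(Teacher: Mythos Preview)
Your approach is essentially the paper's: derive the closed formula for $\mathrm{OTC}^{(d)}_{n,k}$, reindex via $j=n-1-k$, expand the terms near $j=0$ using Stirling, and sum by Laplace/dominated convergence. The paper arrives at exactly your expansion $\rho_{n,j}\sim c_d^{\,j}\,n^{(3-d)j}/\bigl(j!(j+1)!\bigr)$ and draws the same trichotomy from the sign of $3-d$. One small imprecision: for $d\ge4$ your consecutive ratio is off by the constant $c_d=\tfrac{d!}{2d^{d-2}}$ (which equals $1$ precisely for $d=2,3$), so the correct pointwise limit is $\rho_{n,j}\sim (c_d)^j n^{(3-d)j}/\bigl(j!(j+1)!\bigr)$; this does not affect your conclusion since you only need $\rho_{n,j}\to0$ there. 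For $d=2$ the paper simply cites \cite{FuYuZh} rather than reproving anything, so your Bessel-function identification $\sum_j n^j/\bigl(j!(j+1)!\bigr)=n^{-1/2}I_1(2\sqrt n)$ together with the acknowledged need for second-order control is in fact more explicit than what appears here, and your diagnosis that the extra $1/\sqrt e$ comes from the subleading correction summed over the $\Theta(\sqrt n)$ window is exactly right.
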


The result for the case $d=2$ is already contained in \cite{FuYuZh}. Note that it is the only case of the three cases above in which we find a stretched exponential in the asymptotics; see~\cite{ElFaWa}.
The above mentioned formula for $\mathrm{OTC}_{n,k}^{(d)}$ also gives the following distributional result for the number of reticulation nodes.

\begin{thm}\label{Th-2}
Let $R^{(d)}_n$ be the number of reticulation nodes of a one-component $d$-combining tree-child network picked uniformly at random from the set of all one-component $d$-combining tree-child networks with $n$ leaves. Then, we have the following limit behavior of $R^{(d)}_n$.
\begin{itemize}
\item[(i)] For $d=2$ (bicombining case), we have the convergence in distribution\footnote{A sequence $(X_n)_{n \geq 1}$ of random variables converges in distribution (or in law) to a random variable $X$ (denoted by $X_n \claw X$) if $\lim_{n \to \infty} F_{X_n }(x)=F_X(x)$ for each continuity point $x \in \mathbb{R}$ of $F_X$, where $F_{X_n}$ and $F_X$ are the respective cumulative distribution functions.} result:
\[
\frac{R^{(2)}_n-n+\sqrt{n}}{\sqrt[4]{n/4}} \claw N(0,1),
\]
where $N(0,1)$ denotes the standard normal distribution.
\item[(ii)] For $d=3$, we have the convergence in distribution result:
\[
n-1-R^{(3)}_n \claw \operatorname{Bessel}(1,2),
\]
where Bessel$(v,a)$ denotes the Bessel distribution, which is defined via $I_{v}(\alpha)$ from Theorem~\ref{Th-1}~(ii):
\[
\mathbb{P}(\operatorname{Bessel}(1,2)=k)=\frac{1}{I_1(2)k!(k+1)!},\qquad (k\geq 0).
\]
\item[(iii)] For $d\geq 4$, the limit law of $n-1-R_n^{(d)}$ is degenerate at $0$, i.e., $n-1-R^{(d)}_n \claw \text{Dirac}(0)$, where $\text{Dirac}(\lambda)$ denotes the Dirac measure at $\lambda$.
\end{itemize}
\end{thm}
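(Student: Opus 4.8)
The plan is to read off all three limit laws from a single asymptotic analysis of the exact formula for $\mathrm{OTC}_{n,k}^{(d)}$, organised around the \emph{deficiency} $D_n^{(d)}:=n-1-R_n^{(d)}$, which measures the distance from the maximal possible number $n-1$ of reticulation nodes. Since $R_n^{(d)}$ is drawn uniformly, its law is $\mathbb{P}(R_n^{(d)}=k)=\mathrm{OTC}_{n,k}^{(d)}/\mathrm{OTC}_n^{(d)}$, so everything is governed by the shape of the sequence $k\mapsto\mathrm{OTC}_{n,k}^{(d)}$. First I would substitute the closed form for $\mathrm{OTC}_{n,k}^{(d)}$ (the formula extending Theorem~13 of \cite{CaZh}) and simplify the ratio of consecutive counts
\[
g_{n,j}:=\frac{\mathrm{OTC}_{n,n-1-j}^{(d)}}{\mathrm{OTC}_{n,n-j}^{(d)}},
\]
which, because the formula is a product of factorials and powers, reduces to an explicit rational function of $n$, $j$ and $d$. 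The telescoping identity $\mathrm{OTC}_{n,n-1-j}^{(d)}/\mathrm{OTC}_{n,n-1}^{(d)}=\prod_{i=1}^{j}g_{n,i}$ then expresses every normalised count in terms of these ratios, and it is the $n\to\infty$ behavior of this product that separates the three regimes.

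For $d\ge 3$ the argument is a dominated-convergence computation. For fixed $j$ I would pass to the limit in $\prod_{i=1}^{j}g_{n,i}$: for $d=3$ each factor $g_{n,i}$ tends to $1/\bigl(i(i+1)\bigr)$, so the product tends to $\prod_{i=1}^{j}1/\bigl(i(i+1)\bigr)=1/\bigl(j!\,(j+1)!\bigr)$, whereas for $d\ge 4$ the factors tend to $0$ and the product vanishes for every $j\ge 1$. Supplying a summable majorant $g_{n,i}\le w_i$ uniform in $n$, read off directly from the rational expression, lets me interchange limit and summation. The normalising sum $\mathrm{OTC}_n^{(d)}/\mathrm{OTC}_{n,n-1}^{(d)}=\sum_{j\ge 0}\prod_{i=1}^{j}g_{n,i}$ then converges to $I_1(2)=\sum_{j\ge 0}1/\bigl(j!\,(j+1)!\bigr)$ when $d=3$ and to $1$ when $d\ge 4$, in agreement with Theorem~\ref{Th-1}~(ii),(iii), and the pointwise limits of $\mathbb{P}(D_n^{(d)}=j)$ identify the limit law as $\operatorname{Bessel}(1,2)$ for $d=3$ and as $\text{Dirac}(0)$ for $d\ge 4$.

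The case $d=2$ is different and is where the real work lies. Here the ratios $g_{n,i}$ stay close to $1$ over a long initial range, so the mode of $k\mapsto\mathrm{OTC}_{n,k}^{(2)}$ does not sit at $k=n-1$ but drifts inward, and $D_n^{(2)}$ spreads out instead of converging to a fixed law. To handle this I would apply Stirling's formula to the closed form to obtain a smooth approximation $\log\mathbb{P}(D_n^{(2)}=j)=\varphi_n(j)+o(1)$, locate the mode $j^{\ast}$ by solving $g_{n,j^{\ast}}=1$ (which gives $j^{\ast}\sim\sqrt{n}$), and Taylor-expand $\varphi_n$ to second order about $j^{\ast}$. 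This produces a Gaussian local limit theorem with mean $\sim\sqrt{n}$ and variance $\sim\tfrac12\sqrt{n}$, which, after summation over a window and a tail-tightness check, upgrades to the stated weak convergence of $(R_n^{(2)}-n+\sqrt{n})/\sqrt[4]{n/4}$ to $N(0,1)$ via the linear relation $R_n^{(2)}-n+\sqrt n=-\bigl(D_n^{(2)}-(\sqrt n-1)\bigr)$.

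The main obstacle is precisely this $d=2$ analysis: because the mode $j^{\ast}\sim\sqrt{n}$ moves and the relevant window has width of order $n^{1/4}$, the Stirling error terms must be controlled \emph{uniformly} across that growing window, and the second-order term of the expansion must be shown to dominate the cubic and higher corrections throughout it. By contrast, once the majorant for dominated convergence is in place, the cases $d\ge 3$ are routine, the only genuine point being the justification of the interchange of limit and sum.
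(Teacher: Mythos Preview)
Your proposal is correct and follows essentially the same route as the paper's proof. The paper applies Stirling's formula directly to the closed form for $\mathrm{OTC}^{(d)}_{n,n-1-k}$ to obtain the uniform expansion $\mathrm{OTC}^{(d)}_{n,n-1-k}=\frac{c_d^k}{k!(k+1)!}\,n^{(3-d)k}\,\mathrm{OTC}^{(d)}_{n,n-1}\bigl(1+O((1+k^2)/n)\bigr)$ and then invokes the Laplace method, whereas you reach the same pointwise limits via the telescoping product of ratios $g_{n,i}$ together with dominated convergence; these are two packagings of the same computation, and your ratio argument for $d\ge 3$ is arguably a touch more elementary since it avoids Stirling altogether. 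For $d=2$ both the paper (which simply cites \cite{FuYuZh,FuLiYu}) and your outline proceed by the standard local central limit theorem around the mode $j^\ast\sim\sqrt{n}$.
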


Note that the above result for $d=2$ is already contained in the proof of \cite[Theorem~3]{FuYuZh} where even a local limit theorem was proved; see also \cite{FuLiYu}.

\begin{rem}
If $t$ denotes the number of tree nodes and $\tilde{n}$ the total number of nodes, then by the handshaking lemma, we have
\begin{equation}\label{tree-nodes}
t=n+(d-1)k-1
\end{equation}
and thus,
\[
\tilde{n}=2n+dk.
\]
Therefore, we have similar limit distribution results for these numbers as well.
\end{rem}

We next turn to general tree-child networks. Here, in contrast to one-component tree-child networks, we do not have an closed form for $\mathrm{TC}^{(d)}_{n,k}$. However, we introduce a way of encoding these networks by words and this encoding leads to a recursive method for computing $\mathrm{TC}^{(d)}_{n,k}$. Using this method, the values of this sequence for small $n,k$ and $d$ can be computed; see Appendix~\ref{App-A}.

In addition, we are going to see that the growth of $\mathrm{TC}^{(d)}_{n,k}$ is dominated by $\mathrm{TC}^{(d)}_{n,n-1}$. For the latter sequence, a recurrence used for the computation of its values and the method of \cite{ElFaWa} (which needs some adaptions because of the dependence on $d$) yields the following asymptotic counting result for $\mathrm{TC}^{(d)}_{n}$.
For the bicombining case, this result was proved in \cite{FuYuZh}.
Contrary to the one-component case from Theorem~\ref{Th-1}, in the general case the stretched exponential appears for all $d \geq 2$.

\begin{thm}\label{gen-tc-max-k}
For the number of $d$-combining tree-child networks with $n$ leaves, we have
\begin{equation}\label{asymp-TCn}
\mathrm{TC}^{(d)}_{n}
=\Theta\left(\mathrm{TC}^{(d)}_{n,n-1}\right)
=\Theta\left((n!)^{d} \, \gamma(d)^n \, e^{3a_1\beta(d)n^{1/3}} n^{\alpha(d)} \right),
\end{equation}
where $a_1=-2.33810741\ldots$ is the largest root of the Airy function of the first kind, defined as the unique function $\Ai(z)$ satisfying $\Ai''(z) = z \Ai(z)$ such that $\lim_{z \to \infty} \Ai(z)=0$,  and
\[
\alpha(d)=-\frac{d(3d-1)}{2(d+1)},
\qquad\beta(d)=\left(\frac{d-1}{d+1}\right)^{2/3},
\qquad\gamma(d)=4\frac{(d+1)^{d-1}}{(d-1)!}.
\]
\end{thm}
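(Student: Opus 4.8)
The proof of Theorem~\ref{gen-tc-max-k} decomposes into two conceptually separate tasks: establishing that the full count $\mathrm{TC}^{(d)}_n$ is of the same order as the single term $\mathrm{TC}^{(d)}_{n,n-1}$ (the term with the maximal number of reticulation nodes), and then extracting the precise asymptotics of $\mathrm{TC}^{(d)}_{n,n-1}$ itself. The first task is the "$\Theta$-reduction'' and the second is a genuine singularity/saddle-point analysis driven by the Airy function. I will carry them out in that order.

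\medskip

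\noindent\textbf{Step 1: Reduction to the maximal layer.} First I would establish
\[
\mathrm{TC}^{(d)}_n=\Theta\!\left(\mathrm{TC}^{(d)}_{n,n-1}\right).
\]
The lower bound is immediate since $\mathrm{TC}^{(d)}_{n,n-1}$ is a single summand of $\mathrm{TC}^{(d)}_n=\sum_{k=0}^{n-1}\mathrm{TC}^{(d)}_{n,k}$. For the upper bound I would use the word-encoding and recursive description of $\mathrm{TC}^{(d)}_{n,k}$ promised in the paragraph preceding the theorem to control the ratio $\mathrm{TC}^{(d)}_{n,k}/\mathrm{TC}^{(d)}_{n,k+1}$. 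The goal is to show that as $k$ decreases from $n-1$, the counts decay at least geometrically (uniformly in $n$), so that the sum over $k$ is dominated by its top term up to a constant factor depending only on $d$. In the bicombining case this kind of domination is already known from \cite{FuYuZh}; for general $d$ the same mechanism should persist, with the ratio bound reflecting the combinatorial cost of replacing a reticulation by a tree structure.

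\medskip

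\noindent\textbf{Step 2: Asymptotics of $\mathrm{TC}^{(d)}_{n,n-1}$ via the method of \cite{ElFaWa}.} The heart of the proof is the asymptotic evaluation of $\mathrm{TC}^{(d)}_{n,n-1}$. Here I would set up the recurrence satisfied by this sequence (the one mentioned as being "used for the computation of its values'') and feed it into the framework of \cite{ElFaWa}, which handles sequences whose generating functions have the stretched-exponential form $e^{c\,n^{1/3}}$ arising from an Airy-type singularity. Concretely, I expect the recurrence to have the structure whose associated generating function develops a singularity governed by the Airy function $\Ai(z)$, with the largest root $a_1$ of $\Ai$ entering the subexponential factor $e^{3a_1\beta(d)n^{1/3}}$. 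The substantial work is to track the $d$-dependence through the constants: the exponential growth rate $\gamma(d)=4(d+1)^{d-1}/(d-1)!$, the stretched-exponential coefficient via $\beta(d)=((d-1)/(d+1))^{2/3}$, and the polynomial exponent $\alpha(d)=-d(3d-1)/(2(d+1))$. Since \cite{ElFaWa} is stated for a fixed (or particular) recurrence, the main technical obstacle is adapting their analysis to a $d$-parametrized family: one must verify that their hypotheses hold uniformly and compute each constant explicitly as a function of $d$ rather than relying on numerical values.

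\medskip

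\noindent\textbf{Where the difficulty lies.} I expect Step~1 (the $\Theta$-reduction) to be comparatively routine once the word-encoding is in hand, whereas Step~2 is the crux. The delicate point is extracting the three $d$-dependent constants from the Airy asymptotics: the factor $e^{3a_1\beta(d)n^{1/3}}$ and the polynomial correction $n^{\alpha(d)}$ come from a refined saddle-point/transfer analysis, and getting the correct power $\alpha(d)=-d(3d-1)/(2(d+1))$ requires careful bookkeeping of the $(n!)^d$ normalization against the subexponential terms. I would sanity-check the final formula by specializing to $d=2$ and confirming it reproduces the known bicombining result of \cite{FuYuZh}, which serves as the decisive consistency test for the $d$-dependent constants.
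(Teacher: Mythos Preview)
Your overall two-step decomposition matches the paper's, and Step~1 is essentially right in spirit. One correction: the paper does \emph{not} use the word encoding for the $\Theta$-reduction. It uses a direct combinatorial injection (Lemma~\ref{main-obs-TC}): given a network with $k$ reticulations, insert $d-1$ nodes on the root edge and one reticulation node on a free edge to obtain a network with $k+1$ reticulations. Since there are $2(n-k-1)$ free edges, this yields $\mathrm{TC}^{(d)}_{n,k}\le \frac{1}{2(n-k-1)}\mathrm{TC}^{(d)}_{n,k+1}$, so the decay in $k$ is factorial, not merely geometric, and the tail sums to at most $\sqrt{e}\cdot\mathrm{TC}^{(d)}_{n,n-1}$.

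The real gap is in Step~2. You describe the method of \cite{ElFaWa} as a generating-function/singularity/saddle-point/transfer analysis in which the Airy function appears through the singular behaviour of some generating function. That is not how the method works, and you would not be able to execute the argument along those lines. The actual approach is a \emph{direct inductive comparison on a bivariate recurrence}. After the word encoding one has $\mathrm{TC}^{(d)}_{n,n-1}=n!\,c^{(d)}_{n-1}$ with $c^{(d)}_n=\sum_m b^{(d)}_{n,m}$, and $b^{(d)}_{n,m}$ satisfies a two-term recurrence in $(n,m)$ (Lemma~\ref{lem:cn-bnm}). After an explicit rescaling $b^{(d)}_{n,m}=\lambda(d)^n(n!)^{d-1}e^{(d)}_{n+m,n-m}$ (Lemma~\ref{mod-rec}), one arrives at a lattice-path-type recurrence $e^{(d)}_{n,m}=\mu^{(d)}_{n,m}e^{(d)}_{n-1,m+1}+\nu^{(d)}_{n,m}e^{(d)}_{n-1,m-1}$. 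The Airy function enters not via a singularity but through a \emph{heuristic continuum limit}: writing $e^{(d)}_{n,m}\approx h(n)f((m+1)n^{-1/3})$ and expanding the recurrence produces the Airy ODE $f''(\kappa)=(c_1+\tfrac{2(d-1)}{d+1}\kappa)f(\kappa)$. This heuristic is then made rigorous by constructing explicit comparison sequences $\tilde X_{n,m}$ and $\hat X_{n,m}$ (polynomial corrections times $\Ai(a_1+B^{1/3}(m+1)n^{-1/3})$) and matching step sequences $\tilde s_n,\hat s_n$, and \emph{proving by induction} that they sub-/super-solve the recurrence (Propositions~\ref{lem:AiryXLower} and~\ref{lem:AiryXUpper}). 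The upper bound additionally needs a lattice-path argument to control the contribution from large $m$. The constants $\alpha(d),\beta(d),\gamma(d)$ drop out of this machinery: $\gamma(d)$ from the rescaling $\lambda(d)$ times the factor $4$ coming from $\tilde s_n\sim 2$, $\beta(d)$ from $B=2(d-1)/(d+1)$ in the Airy argument, and $\alpha(d)$ from the $n^{-1}$ term of $\tilde s_n$ combined with the chain \eqref{eq:thetachain}. None of this is visible from a generating-function viewpoint, so your plan for Step~2 would need to be rewritten from scratch along these lines.
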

\begin{table}[t]
\centering
\renewcommand{\arraystretch}{1.2}
\begin{tabular}{r @{\hskip 5\tabcolsep} cr @{\hskip 5\tabcolsep} cr @{\hskip 5\tabcolsep} cr}
\toprule
\multicolumn{1}{c@{\hskip 5\tabcolsep}}{$d$} & $\alpha(d)$ & \multicolumn{1}{c@{\hskip 5\tabcolsep}}{$\approx$} & $\beta(d)$ & \multicolumn{1}{c@{\hskip 5\tabcolsep}}{$\approx$} & $\gamma(d)$ & \multicolumn{1}{c}{$\approx$} \\
\midrule
$ 2$ & $  -{\frac{5}{3}}$ & $-1.67$ & $(  {\frac{1}{3}})^{2/3}$ & $0.48$ & $                        12$ & $   12.00$ \\
$ 3$ & $              -3$ & $-3.00$ & $(  {\frac{1}{2}})^{2/3}$ & $0.63$ & $                        32$ & $   32.00$ \\
$ 4$ & $ -{\frac{22}{5}}$ & $-4.40$ & $(  {\frac{3}{5}})^{2/3}$ & $0.71$ & $           {\frac{250}{3}}$ & $   83.33$ \\
$ 5$ & $ -{\frac{35}{6}}$ & $-5.83$ & $(  {\frac{2}{3}})^{2/3}$ & $0.76$ & $                       216$ & $  216.00$ \\
$ 6$ & $ -{\frac{51}{7}}$ & $-7.29$ & $(  {\frac{5}{7}})^{2/3}$ & $0.80$ & $        {\frac{16807}{30}}$ & $  560.23$ \\
$ 7$ & $ -{\frac{35}{4}}$ & $-8.75$ & $(  {\frac{3}{4}})^{2/3}$ & $0.83$ & $        {\frac{65536}{45}}$ & $ 1456.36$ \\
$ 8$ & $ -{\frac{92}{9}}$ & $-10.22$ & $(  {\frac{7}{9}})^{2/3}$ & $0.85$ & $      {\frac{531441}{140}}$ & $ 3796.01$ \\
\bottomrule
\end{tabular}
\caption{Specific values of the asymptotic parameters $\alpha(d)$, $\beta(d)$, and $\gamma(d)$ from Theorem~\ref{gen-tc-max-k}.}
\label{tab:gen-tc-max-k}
\end{table}

The first few specific values of the asymptotic parameters $\alpha(d)$, $\beta(d)$, and $\gamma(d)$ 
are shown Table~\ref{tab:gen-tc-max-k}.
Also, by performing a finer analysis for $k$ close to $n$, we obtain the following distributional result for the number of reticulation nodes. (In the case $d\geq 3$, the above mentioned encoding again plays an important role in the proof; the case $d=2$ needs a different treatment.)

\begin{thm}\label{ll-gen-tc}
Let $T_n^{(d)}$ be the number of reticulation nodes of a $d$-combining tree-child network picked uniformly at random from the set of all $d$-combining tree-child networks with $n$ leaves. Then, we have the following limit behavior of $T_n^{(d)}$.

\begin{itemize}
\item[(i)] For $d=2$ (bicombining case), we have the convergence in distribution result:
\[
n-1-T_n^{(2)} \claw \operatorname{Poisson}(1/2),
\]
where $\operatorname{Poisson}(\alpha)$ denotes the Poisson distribution.
\item[(ii)] For $d\geq 3$, the limit distribution of $n-1-T_n^{(d)}$ is degenerate at $0$.
\end{itemize}
\end{thm}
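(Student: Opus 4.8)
The plan is to study the probability mass function of $n-1-T_n^{(d)}$ directly at its minimal value and to prove pointwise convergence to the claimed limit law; since in both cases the limiting mass function sums to $1$, pointwise convergence of the mass function of an integer-valued random variable yields the asserted weak convergence. Writing $j=n-1-k$ and introducing the ratios
\[
r_{n,j}=\frac{\mathrm{TC}^{(d)}_{n,n-1-j}}{\mathrm{TC}^{(d)}_{n,n-1}},\qquad j\geq 0,
\]
we have $r_{n,0}=1$ and
\[
\mathbb{P}\bigl(n-1-T_n^{(d)}=j\bigr)=\frac{r_{n,j}}{\sum_{i\geq 0}r_{n,i}}.
\]
Everything therefore reduces to two tasks: (a) identifying the limit $\rho_j:=\lim_{n\to\infty}r_{n,j}$ for each fixed $j$, and (b) producing a summable majorant $r_{n,j}\leq b_j$ with $\sum_j b_j<\infty$ uniformly in $n$, so that $\sum_i r_{n,i}\to\sum_i\rho_i$ by dominated convergence for series and the normalization passes through.

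For $d\geq 3$ I would use the word encoding of tree-child networks and the recurrence it induces for $\mathrm{TC}^{(d)}_{n,k}$ (the same recurrence underlying Theorem~\ref{gen-tc-max-k}), now read off at $k=n-1-j$ with $j$ fixed. The key point is that lowering $k$ by one below its maximum forces a local defect in the encoding whose contribution is suppressed by a positive power of $n$; pushing the saddle-point/Airy analysis of \cite{ElFaWa} to this finer scale should yield $\mathrm{TC}^{(d)}_{n,n-1-j}=\LandauO\bigl(n^{-cj}\,\mathrm{TC}^{(d)}_{n,n-1}\bigr)$ for some $c=c(d)>0$, so that $\rho_j=0$ for all $j\geq 1$ while $\rho_0=1$. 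Combined with the majorant from (b) this gives $\sum_i r_{n,i}\to 1$ and hence $\mathbb{P}\bigl(n-1-T_n^{(d)}=0\bigr)\to 1$, i.e. degeneracy at $0$.

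The case $d=2$ genuinely needs a different treatment, because the defect is no longer polynomially suppressed and $\rho_j$ is a strictly positive constant. Here I would extract $\rho_j$ through the generating-function machinery of \cite{FuYuZh}, showing that each of the $j$ missing reticulations contributes an asymptotic factor $1/2$ and that the $j$ defects are exchangeable, which produces the weight $\rho_j=1/(2^j j!)$. Then $\sum_i\rho_i=e^{1/2}$, and after normalization $\mathbb{P}\bigl(n-1-T_n^{(2)}=j\bigr)\to e^{-1/2}(1/2)^j/j!$, which is exactly $\operatorname{Poisson}(1/2)$.

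I expect the main obstacle to be task (a) in the bicombining case: pinning down the exact constant $1/2$ per defect — rather than merely $\Theta(1)$ — requires a genuinely refined asymptotic expansion of $\mathrm{TC}^{(2)}_{n,n-1-j}$, in which one must verify that the stretched-exponential factor $e^{3a_1\beta(d)n^{1/3}}$ and the leading power of $n$ are common to all $j$ near the top and hence cancel in the ratio $r_{n,j}$. Establishing (b) is the companion technical crux: one needs a bound on the ratios that is simultaneously uniform in $n$ and summable in $j$ over the full range $0\leq j\leq n-1$, since this is precisely what legitimizes interchanging the limit with the normalizing sum in both regimes.
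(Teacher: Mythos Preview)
Your overall framework --- study the ratios $r_{n,j}$, establish pointwise limits $\rho_j$, and justify the interchange via a summable majorant --- is exactly the skeleton the paper uses, and the majorant in (b) is indeed easy: a one-line injection (insert $d-1$ nodes on the root edge and a reticulation node on a free edge) already gives $\mathrm{TC}^{(d)}_{n,n-1-j}\leq \frac{1}{2^{j}j!}\mathrm{TC}^{(d)}_{n,n-1}$, uniformly in $n$ and $j$. So the issue is entirely task (a).

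For $d=2$ your proposed route has a genuine gap. You plan to obtain the exact factor $1/2$ per defect by a ``refined asymptotic expansion of $\mathrm{TC}^{(2)}_{n,n-1-j}$'' in which the stretched exponential and the power of $n$ cancel in the ratio. But the method of \cite{ElFaWa,FuYuZh} yields only a $\Theta$-result for $\mathrm{TC}^{(2)}_{n,n-1}$ (the paper says so explicitly), so there is no known first-order asymptotic to cancel against; your scheme would at best produce $\rho_j=\Theta(1)$, not $\rho_j=1/(2^{j}j!)$. The paper avoids asymptotics altogether at this step: it splits $\mathrm{TC}_{n,n-k}=\mathrm{NF}_{n,n-k}+\mathrm{F}_{n,n-k}$ according to whether the child of the root is free, proves the \emph{exact} combinatorial identity $(2k)\,\mathrm{TC}_{n,n-1-k}=\mathrm{NF}_{n,n-k}$ via a reversible surgery at the root, and then shows $\mathrm{F}_{n,n-2}=\mathcal{O}\bigl(n^{-2/3}\,\mathrm{TC}_{n,n-1}\bigr)$ using the word encoding (here the Airy-type analysis is only needed for an upper bound, which \emph{is} available). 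Iterating the identity then gives $r_{n,j}=\frac{1}{2^{j}j!}(1+o(1))$ for $j$ up to $c\log n$, and the Poisson law follows. The constant $1/2$ thus comes from a bijection, not from asymptotics; your proposal is missing this combinatorial ingredient.

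For $d\geq 3$ your idea is closer in spirit, though still vague. The paper does not push the saddle-point analysis to $k=n-1-j$; instead it proves $\mathrm{TC}^{(3)}_{n,n-2}=o(\mathrm{TC}^{(3)}_{n,n-1})$ by a direct combinatorial comparison: build $3$-combining networks from $2$-combining ones by inserting the extra parent of each reticulation node, and bound the resulting multiplicity ratio uniformly by $\Theta(n^{1/2})$. One application of this (plus the summable majorant) already forces degeneracy, so no polynomial-in-$j$ decay is needed.
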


This result is new even in the case $d=2$. In fact, as far as we are aware of, it constitutes the first limit law result for a shape parameter in random
tree-child networks.
Also, the result for $d=2$ improves \cite[Theorem 1.5, (iii)]{DiSeWe}, which states that the number of reticulation nodes of almost all bicombining tree-child networks with $n$ leaves is asymptotic to $n$.

In addition, the above result can be used to improve and extend \cite[Proposition 1.6, (ii)]{DiSeWe}, which was concerned with the number of {\it twigs} of (bicombining) tree-child networks. A twig is a tree node which is contained in a {\it pendant subtree},
i.e., a tree node that has no reticulation node as descendant.
In \cite{DiSeWe}, it was proved that the number of twigs in a random bicombining tree-child network is $o(n)$. In fact, twigs are even rarer than that.

\begin{cor}\label{cor-1}
Let $W_n^{(d)}$ be the number of twigs of a $d$-combining tree-child network picked uniformly at random from the set of all $d$-combining tree-child networks with $n$ leaves. 
\begin{itemize}
\item[(i)] For $d=2$ (bicombining), we have
\[
\mathbb{E}(W_n^{(d)})={\mathcal O}(1).
\]
\item[(ii)] For $d\geq 3$, the limit law of $W_n^{(d)}$ is degenerate at $0$. More precisely,
\[
\mathbb{E}(W_n^{(d)})\longrightarrow 0.
\]
\end{itemize}
\end{cor}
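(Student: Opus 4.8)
The plan is to relate the number of twigs to the quantity $n-1-T_n^{(d)}$ controlled by Theorem~\ref{ll-gen-tc}, and then to sharpen the tail estimate in the case $d=2$ so that the expectation, not merely the distribution, is bounded. First I would recall that a twig is a tree node whose entire descendant set contains no reticulation node, i.e.\ a tree node sitting in a pendant subtree. The crucial structural observation is that the presence of a twig forces the network to have {\it strictly fewer} than $n-1$ reticulation nodes: if the network attains the maximal number $k=n-1$ of reticulation nodes, then every tree node must have a reticulation node as descendant, so no pendant subtrees of positive size can exist and there are no twigs. Consequently $\{W_n^{(d)}\geq 1\}\subseteq\{T_n^{(d)}\leq n-2\}=\{n-1-T_n^{(d)}\geq 1\}$. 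This immediately gives part~(ii): by Theorem~\ref{ll-gen-tc}~(ii), for $d\geq 3$ we have $n-1-T_n^{(d)}\stackrel{w}{\longrightarrow}\operatorname{Dirac}(0)$, hence $\mathbb{P}(W_n^{(d)}\geq 1)\leq\mathbb{P}(n-1-T_n^{(d)}\geq 1)\to 0$, so $W_n^{(d)}\stackrel{w}{\longrightarrow}\operatorname{Dirac}(0)$; and since at most one twig can occur when $n-1-T_n^{(d)}$ is small (a single missing reticulation node can create only a bounded number of twigs), one gets $\mathbb{E}(W_n^{(d)})\to 0$.

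For part~(i), the degenerate-to-Poisson$(1/2)$ convergence alone does not bound the expectation, since weak convergence says nothing about tails; what I need is a quantitative bound on $\mathbb{E}(W_n^{(2)})$. The cleanest route is to bound $\mathbb{E}(W_n^{(2)})$ by a linear functional of the deficiency $D_n:=n-1-T_n^{(2)}$, using the deterministic inequality $W_n^{(d)}\leq c\,(n-1-T_n^{(d)})$ for a constant $c$ (each reticulation node that is ``missing'' relative to the maximum can free up only boundedly many tree nodes to become twigs — a removed reticulation edge affects a controlled local neighborhood). Granting such a pointwise bound, $\mathbb{E}(W_n^{(2)})\leq c\,\mathbb{E}(D_n)$, and it remains to show $\mathbb{E}(D_n)=\mathcal{O}(1)$. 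This is exactly the statement that the counts $\mathrm{TC}^{(2)}_{n,n-1-j}$ decay geometrically (or faster) in $j$, which follows from the same finer analysis near $k=n-1$ that underlies the Poisson limit: one shows $\mathrm{TC}^{(2)}_{n,n-1-j}/\mathrm{TC}^{(2)}_{n,n-1}$ is bounded by the tail of a summable sequence (matching $\mathbb{P}(\operatorname{Poisson}(1/2)=j)$ in the limit), uniformly in $n$, so that $\mathbb{E}(D_n)=\sum_{j\geq 0} j\,\mathbb{P}(D_n=j)$ converges and stays bounded.

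The main obstacle is establishing the deterministic comparison $W_n^{(d)}\le c\,(n-1-T_n^{(d)})$ — that is, arguing combinatorially that each unit of reticulation deficiency can be responsible for only $\LandauO(1)$ twigs. One has to verify that twigs cannot proliferate: intuitively, a pendant subtree attached where a reticulation could have been placed contributes a bounded number of tree nodes with no reticulation descendant, but making this rigorous requires a careful local analysis of the tree-child structure (using properties~(a)--(c) after the tree-child definition) to show that the twigs are confined to neighborhoods of the ``defect'' sites and that their total count scales linearly with the number of such sites. The secondary obstacle is the uniform-in-$n$ tail bound on $\mathrm{TC}^{(2)}_{n,n-1-j}$; this should follow by the same recursive/word-encoding machinery used to prove Theorem~\ref{ll-gen-tc}~(i), but transferring a weak-convergence argument into a summable uniform bound (to justify interchanging limit and summation in the expectation) needs an explicit dominating sequence rather than a bare limit law.
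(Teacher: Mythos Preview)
Your overall strategy matches the paper's: bound $W_n^{(d)}$ deterministically by $n-1-T_n^{(d)}$ and then use the convergence of moments of the latter. However, you are making your ``main obstacle'' far harder than it is. A twig is a tree node with no reticulation node among its descendants; in particular neither of its two children is a reticulation node, so \emph{every twig is a free tree node}. By Lemma~\ref{free-nodes} the number of free tree nodes in a network with $k$ reticulation nodes is exactly $n-1-k$, hence
\[
W_n^{(d)}\le n-1-T_n^{(d)}
\]
pointwise, with $c=1$ and no local analysis needed. This single line replaces your entire discussion of ``defect sites'' and ``local neighbourhoods''.

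Your ``secondary obstacle'' is also already resolved in the paper. The uniform dominating sequence you ask for is precisely (\ref{TCnk-upper-bound}), namely $\mathrm{TC}^{(d)}_{n,n-1-j}\le \frac{1}{2^j j!}\,\mathrm{TC}^{(d)}_{n,n-1}$, which is summable in $j$ uniformly in $n$ and justifies the interchange of limit and sum. With this, $\mathbb{E}(n-1-T_n^{(d)})\to 1/2$ for $d=2$ and $\to 0$ for $d\ge 3$ (Remarks~\ref{d=2-conv-mom} and~\ref{d>2-conv-mom}), and the corollary follows immediately from $\mathbb{E}(W_n^{(d)})\le \mathbb{E}(n-1-T_n^{(d)})$.
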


This result also shows that the expected number of {\it cherries}, i.e., tree nodes with both children leaves, of a random $d$-combining tree-child network is bounded, too (since cherries are clearly twigs).
Note that the number of cherries is a popular parameter in phylogenetics and has been extensively studied for {\it phylogenetic trees} (which are bicombining networks without reticulation nodes).

Finally, Theorem~\ref{ll-gen-tc} also implies an improvement of the first equality in~\eqref{asymp-TCn}.

\begin{cor}\label{cor-2}
The following asymptotic equivalences hold for $d$-combining tree-child networks.
\begin{itemize}
\item[(i)] For $d=2$ (bicombining case), we have $\mathrm{TC}_n^{(2)}\sim\sqrt{e}\cdot\mathrm{TC}_{n,n-1}^{(2)}$.
\item[(ii)] For $d\geq 3$, we have $\mathrm{TC}_n^{(d)}\sim\mathrm{TC}_{n,n-1}^{(d)}$.
\end{itemize}
\end{cor}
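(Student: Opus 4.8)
The plan is to read off both asymptotic equivalences directly from Theorem~\ref{ll-gen-tc}, the key observation being that the ratio $\mathrm{TC}^{(d)}_{n,n-1}/\mathrm{TC}^{(d)}_n$ is precisely the probability that the shifted reticulation count $n-1-T_n^{(d)}$ takes the value $0$. Indeed, by the definition of the uniform model, for each fixed integer $j\geq 0$ we have
\[
\mathbb{P}\bigl(n-1-T_n^{(d)}=j\bigr)=\frac{\mathrm{TC}^{(d)}_{n,n-1-j}}{\mathrm{TC}^{(d)}_n},
\]
and specializing to $j=0$ gives $\mathbb{P}(n-1-T_n^{(d)}=0)=\mathrm{TC}^{(d)}_{n,n-1}/\mathrm{TC}^{(d)}_n$, which is exactly the quantity whose limit we must determine.

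Next I would upgrade the weak convergence in Theorem~\ref{ll-gen-tc} to convergence of the individual atom at $0$. Since $n-1-T_n^{(d)}$, the Poisson limit, and the Dirac limit are all supported on the non-negative integers, the limiting distribution functions are continuous at every half-integer; evaluating the cumulative distribution function of $n-1-T_n^{(d)}$ at $j-\tfrac12$ and $j+\tfrac12$ and subtracting shows that weak convergence forces $\mathbb{P}(n-1-T_n^{(d)}=j)\to\mathbb{P}(X=j)$ for each fixed $j$, where $X$ denotes the respective limit law. This lattice argument is entirely standard, and it is the only point requiring a (brief) justification beyond quoting Theorem~\ref{ll-gen-tc}; accordingly I do not expect any genuine obstacle in this corollary.

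Finally I would specialize to $j=0$ in each case. For $d=2$ the limit is $\operatorname{Poisson}(1/2)$, so $\mathbb{P}(X=0)=e^{-1/2}$ and hence $\mathrm{TC}^{(2)}_{n,n-1}/\mathrm{TC}^{(2)}_n\to e^{-1/2}$, which is equivalent to $\mathrm{TC}_n^{(2)}\sim\sqrt{e}\cdot\mathrm{TC}_{n,n-1}^{(2)}$. For $d\geq 3$ the limit is $\text{Dirac}(0)$, so $\mathbb{P}(X=0)=1$ and $\mathrm{TC}^{(d)}_{n,n-1}/\mathrm{TC}^{(d)}_n\to 1$, i.e.\ $\mathrm{TC}_n^{(d)}\sim\mathrm{TC}_{n,n-1}^{(d)}$. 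Since the entire argument rests on Theorem~\ref{ll-gen-tc}, the corollary is simply a distributional reinterpretation of that theorem evaluated at the atom $n-1-T_n^{(d)}=0$, sharpening the $\Theta$-estimate in~\eqref{asymp-TCn} to a genuine asymptotic equivalence with an explicit constant.
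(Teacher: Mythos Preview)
Your argument is correct and matches the paper's intended derivation: Corollary~\ref{cor-2} is stated in the paper precisely ``as another consequence of Theorem~\ref{ll-gen-tc}'', and the identity $\mathbb{P}(n-1-T_n^{(d)}=0)=\mathrm{TC}^{(d)}_{n,n-1}/\mathrm{TC}^{(d)}_n$ together with pointwise convergence of the mass function (which follows from weak convergence of integer-valued laws) is exactly what is needed. The only structural nuance is that in the paper's actual proof of Theorem~\ref{ll-gen-tc} the asymptotic equivalences~(\ref{asymp-TCn-TCnn-1}) and~(\ref{pf-step-2}) are established \emph{en route} to the limit law rather than deduced from it, so the implication you use is formally the reverse of the paper's order of derivation---but the content is the same.
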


\begin{rem}
Note that even with the above result, it is still not possible to give the first-order asymptotics of $\mathrm{TC}_n^{(d)}$ since the approach of \cite{ElFaWa} (which we are going to use in order to prove Theorem~\ref{gen-tc-max-k}) gives only a Theta-result.
\end{rem}

We next give an outline of the paper. We split our subsequent considerations into two parts: one-component and general $d$-combining tree-child networks.

In Section~\ref{otc}, we consider one-component $d$-combining tree-child networks. This section is divided into three subsections. In the first (Section~\ref{exact-count}), we state and prove an exact counting formula; the second subsection (Section~\ref{numb-ret-otc}) then uses this formula to deduce Theorem~\ref{Th-1} (asymptotic counting) and Theorem~\ref{Th-2} (limit law for the number of reticulation nodes). Finally, in Section~\ref{Sackin}, we discuss another shape parameter, namely, the Sackin index. This index was recently investigated in \cite{Zh} where the order of the mean was found for the bicombining case. In the current paper, we simplify the analysis and extend it to the $d$-combining case.

In Section~\ref{gen-net}, we present our results for general $d$-combining tree-child networks. Again this section is split into three subsections. In Section~\ref{enc-words}, we show how to encode networks by words that satisfy certain restrictions. This encoding is similar to the one proposed in \cite{PoBa}; however, our encoding offers two advantages: (a) it can be rigorously proved (the encoding in \cite{PoBa} is just conjectural) and (b) our encoding works for all $d$ (whereas the encoding from \cite{PoBa} seems to be restricted to $d=2$). As mentioned above, this encoding leads to a recursive method for computing $\mathrm{TC}_{n,k}^{(d)}$. This method is then used in Section~\ref{asymp-count} to derive the Theta-result of Theorem~\ref{gen-tc-max-k}. In addition, the encoding is also helpful in Section~\ref{numb-ret-tc} which contains the proof of Theorem~\ref{ll-gen-tc} (limit law for the number of reticulation nodes) in Section~\ref{ll-d-2} (for $d=2$) and Section~\ref{ll-d-larger-2} (for $d\geq 3$). Moreover, in Section~\ref{sec:proofcor-1} we prove Corollary~\ref{cor-1} for the number of twigs.

We finish the paper with some concluding remarks and comment on further interesting generalizations in Section~\ref{con}.
Additionally, the paper contains three appendices:
In Appendix~\ref{App-A}, we list values of $\mathrm{TC}_{n,k}^{(d)}$ for small values of $n,k,d$. These values are computed with the recurrence from Section~\ref{enc-words}. Alternatively, the computation can be done with the method of component graphs from \cite{CaZh} whose extensions to $d$-combining networks is explained in Appendix~\ref{App-B}. However, the computation of values via this method is more involved. Nevertheless, the method is  still useful since it allows us to find the first-order asymptotics of $\mathrm{TC}_{n,k}^{(d)}$ as $n\rightarrow\infty$ and fixed $k$; see \cite{FuGiMa1,FuGiMa2,FuHuYu} for the corresponding results in the bicombining case. Moreover, the method of component graphs also gives formulas for $\mathrm{TC}_{n,k}^{(d)}$ for (fixed) small values of $k$ and $d$; see \cite{CaZh} and \cite{FuGiMa2} for similar formulas for $d=2$. We list some of these formulas in Appendix~\ref{App-B}.
Finally, in Appendix~\ref{App-C}, we give the technical proofs of two asymptotic inequalities needed in the proof of Theorem~\ref{gen-tc-max-k}.

\smallskip

We conclude the introduction by explaining the difference between this paper and the extended abstract \cite{ChFuLiWaYu} which was presented at the {\it 33rd International Meeting on Probabilistic, Combinatorial and Asymptotic Methods for the Analysis of Algorithms (AofA2022)} that took place from June 20th to June 25th at the University of Pennsylvania, Philadelphia, USA. For one-component networks, the material of Section~\ref{exact-count} and Section~\ref{numb-ret-otc} was already contained in \cite{ChFuLiWaYu}, while the material from Section~\ref{Sackin} is new. For general networks, \cite{ChFuLiWaYu} just contained a sketch of the proof of Theorem~\ref{gen-tc-max-k}. Here, we give the full proof in Section~\ref{asymp-count}. Regarding Sections~\ref{enc-words} and~\ref{numb-ret-tc}, some of its results were just mentioned (without any details) in the conclusion of \cite{ChFuLiWaYu}, e.g., Theorem~\ref{ll-gen-tc} (which we prove in Section~\ref{numb-ret-tc}) was stated only as conjecture. Finally, in Appendix~\ref{App-B}, we prove and correct \cite[Theorem~8]{ChFuLiWaYu}, which was not proved previously. Also, the formulas in this appendix have been announced in \cite{ChFuLiWaYu}.

\section{One-Component Networks}\label{otc}

In this section, we prove results for one-component $d$-combining tree-child networks.

\subsection{Exact Counting}\label{exact-count}

In \cite{CaZh}, the authors derived an exact formula for $\mathrm{OTC}_{n,k}^{(2)}$. We give now the generalization of this formula to $d$-combining networks.
Since both Theorem~\ref{Th-1} and Theorem~\ref{Th-2} are derived from this result, we give two different proofs of it; one below and one in Remark~\ref{rem:OTCprooftwo} succeeding the proof.
The first proof gives a direct combinatorial interpretation of the closed form,
while the second one uses a recursion on $d$ and eventually relies on the formula for $\mathrm{OTC}_{n,k}^{(2)}$ in \cite{CaZh}.

\begin{thm}\label{formula-OTC}
The number of one-component $d$-combining tree-child networks with $n$~leaves and $k$~reticulation nodes for $0 \leq k \leq n-1$ is given by
\[
\mathrm{OTC}^{(d)}_{n,k}=\binom{n}{k}\frac{(2n+(d-2)k-2)!}{(d!)^k \, 2^{n-k-1} \,(n-k-1)!}.
\]
\end{thm}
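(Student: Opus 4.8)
The plan is to generalize the bijective argument behind Cardona--Zhang's formula for $d=2$ by decomposing a one-component network into a \emph{binary skeleton} carrying the $n-k$ non-reticulation leaves, together with the reticulation nodes attached to it. I would first record the structural consequences of the defining properties: in a one-component $d$-combining tree-child network every reticulation node has its $d$ parents among the tree nodes (since by properties (a)--(c) neither the root nor another reticulation node can be a parent), and its unique child is a leaf (one-component property); moreover each tree node is the parent of \emph{at most one} reticulation node, since it has out-degree $2$ and must retain a non-reticulation child. Deleting all $k$ reticulation nodes together with their pendant leaves then turns each reticulation-parent into an out-degree-$1$ node, and suppressing these out-degree-$1$ nodes recovers a leaf-labeled binary tree on the remaining $n-k$ leaves. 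This sets up the map (network) $\mapsto$ (set of reticulation-leaf labels; binary skeleton; placement and grouping of the reticulation parents), and the bulk of the work is to prove it is a bijection.

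I would then count the preimages in four independent steps whose product is the claimed formula. First, choose which $k$ of the $n$ labels sit below reticulation nodes, giving $\binom{n}{k}$; the remaining $n-k$ labels label the leaves of the skeleton. Second, the skeleton is an arbitrary leaf-labeled binary tree on $n-k$ leaves, of which there are $(2(n-k)-3)!!=\frac{(2(n-k)-2)!}{2^{n-k-1}(n-k-1)!}$, and it has exactly $2(n-k)-1$ edges (counting the single edge below the root). Third, the $dk$ reticulation parents are reinserted as subdivision points forming ordered chains along these edges; regarding them as distinguishable through the reticulation they serve, the number of placements of $dk$ items into $2(n-k)-1$ ordered lists is $\frac{(2n+(d-2)k-2)!}{(2(n-k)-2)!}$. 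Fourth, since the $d$ parents of a given reticulation are unordered, I divide by $(d!)^k$. Multiplying, the intermediate $(2(n-k)-2)!$ cancels:
\[
\mathrm{OTC}^{(d)}_{n,k}=\binom{n}{k}\cdot\frac{(2(n-k)-2)!}{2^{n-k-1}(n-k-1)!}\cdot\frac{(2n+(d-2)k-2)!}{(2(n-k)-2)!}\cdot\frac{1}{(d!)^k},
\]
which is exactly the stated expression.

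The crux is establishing that this map is genuinely a bijection, i.e.\ that every choice of skeleton, placement and grouping yields a valid one-component tree-child network and that distinct choices yield distinct networks counted with the correct weight. The essential point is that no further constraints are silently imposed: acyclicity is automatic because a reticulation node's only out-neighbour is a leaf, so it can never lie on a directed cycle; and the tree-child conditions hold automatically because each reticulation-parent keeps its skeleton child as a non-reticulation child, every reticulation is followed by a leaf, and the root's child is a skeleton (tree) node. I would pay particular attention to the case where several parents of the same reticulation lie on a common lineage (a chain on one edge), checking that it is legal and that the $(d!)^k$ division acts freely there (the $d$ parents always occupy $d$ distinct positions, so the $d!$ orderings collapse to one network without stabilizer issues), and to matching the non-plane, unordered-children convention of the skeleton with the $2^{n-k-1}$ factor. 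Once the bijection and the ``distinguishable items into ordered lists'' count are justified, the remaining computation is the elementary cancellation displayed above.
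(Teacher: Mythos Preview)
Your argument is correct and essentially coincides with the paper's third proof (Remark~\ref{rem:OTCdirect}): both decompose a one-component network into a phylogenetic skeleton on the $n-k$ non-reticulation leaves, a placement of $dk$ subdivision points on its $2(n-k)-1$ edges, a grouping of these points into $k$ blocks of size $d$, and a choice of the $k$ reticulation-leaf labels; your factorization (distinguishable points then divide by $(d!)^k$) and the paper's (indistinguishable points times the multinomial $\binom{dk}{d,\dots,d}$) are the two standard equivalent ways to count the same thing. The paper's \emph{main} proof, however, is different: it derives the recurrence $\mathrm{OTC}^{(d)}_{n,k}=\frac{n}{k}\binom{2n+(d-2)k-2}{d}\,\mathrm{OTC}^{(d)}_{n-1,k-1}$ by adding one reticulation at a time and then iterates down to $\mathrm{OTC}^{(d)}_{n-k,0}$, which is arguably slicker in that it avoids any delicate ``divide by symmetry'' step but gives less structural insight than your direct decomposition.
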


\begin{proof}
    We give the closed form for one-component networks a direct combinatorial interpretation.
    For this purpose, we construct all one-component networks as follows:
    \begin{enumerate}[(i)]
        \item \label{rem:OTCdirect-step1}
            Start with a phylogenetic tree with $n-k$ leaves, i.e., without reticulation nodes.

        \item \label{rem:OTCdirect-step2}
            Place $dk$ unary nodes along the $2(n-k)-1$ edges.

        \item \label{rem:OTCdirect-step3}
            Attach $k$ reticulation nodes that are each connected to $d$ of the unary nodes.
            Attach to each reticulation node a leaf and label it from $1$ to $k$ in the order of creation.

        \item \label{rem:OTCdirect-step4}
            Relabel the $n$ leaves respecting the orders of the $n-k$ initial  and $k$ newly created leaves.
    \end{enumerate}
    Each in that way created network is different and all one-component networks satisfy such a decomposition.
    Multiplying the number of possibilities for each step gives the claimed formula:
    \begin{align}
        \label{eq:OTCdirect}
        \mathrm{OTC}^{(d)}_{n,k} &=
            \mathrm{OTC}^{(d)}_{n-k,0} \cdot \binom{2(n-k)+dk-2}{dk} \cdot \binom{dk}{d,d,\dots,d} \cdot \binom{n}{k}.
    \end{align}
    The result follows now by the fact that
    \begin{align}
    \label{eq:phylotreeformula}
    \mathrm{OTC}^{(d)}_{n-k,0}=(2(n-k)-3)!!=\frac{(2n-2k-2)!}{2^{n-k-1}(n-k-1)!}
    \end{align}
    since $\mathrm{OTC}^{(d)}_{n-k,0}$ is the number of phylogenetic trees with $n-k$ leaves; e.g., see \cite[Section 2.1]{St}.
\end{proof}

\begin{rem}\label{rem:OTCprooftwo} A second way to obtain $\mathrm{OTC}_{n,k}^{(d)}$ proceeds by constructing $\mathrm{OTC}_{n,k}^{(d)}$ from $\mathrm{OTC}_{n,k}^{(d-1)}$ as follows.
First, let a $(d-1)$-combining one-component tree-child network with $n$ leaves and $k$ reticulation nodes be given.
Then, there are  $2n+(d-3)k-1$ edges whose end points are not reticulation nodes (i.e. candidate edges).
We add $k$ different nodes (each node being the $d$th parent of a reticulation node) to these edges. Overall there are
\[
\prod_{i=0}^{k-1} (2n+(d-3)k-1 + i)
\]
ways to do this.
Now, if we assign the first one of the $k$ nodes to the first reticulation node, the second one to the second reticulation node, etc., we obtain every one-component $d$-combining tree-child network with $n$ leaves and $k$ reticulation nodes exactly $d^k$ times. Thus,
\[
\frac{\mathrm{OTC}_{n,k}^{(d)}}{\mathrm{OTC}_{n,k}^{(d-1)}}=
\frac{%
\prod_{i=0}^{k-1} (2n+(d-3)k-1 + i)
}{d^k}.
\]
From this, we can get the result for one-component $d$-combining networks by iteration and using the known result for the bicombining case from \cite{CaZh}.
\end{rem}

\subsection{Number of Reticulation Nodes}\label{numb-ret-otc}

From Theorem~\ref{formula-OTC}, we can now deduce Theorems~\ref{Th-1} and~\ref{Th-2} by the Laplace method. (A standard method of asymptotic analysis; see, e.g., \cite[Chapter~4.7]{FlSe}.)

\vspace*{0.35cm}\noindent{\it Proof of Theorems~\ref{Th-1} and~\ref{Th-2}.} Since the results for $d=2$ are already contained in \cite{FuYuZh} (see also \cite{FuLiYu}), we can focus on the cases $d\geq 3$.
We start with the case $d=3$. Note that
\[
\mathrm{OTC}^{(3)}_{n,k}=\binom{n}{k}\frac{(2n+k-2)!}{3^k2^{n-1}(n-k-1)!},\qquad (0\leq k\leq n-1)
\]
and this sequence is increasing in $k$. (This is in contrast to $d=2$ where this sequence increases until its maximum at $k=n-\sqrt{n+1}$ and then decreases; see \cite{FuYuZh}.) By replacing $k$ by $n-1-k$ and using Stirling's formula, we obtain
\begin{equation}\label{exp-otc}
\mathrm{OTC}^{(3)}_{n,n-1-k}=\frac{1}{k!(k+1)!}\cdot\frac{n(3n-3)!}{6^{n-1}}\left(1+{\mathcal O}\left(\frac{1+k^2}{n}\right)\right)
\end{equation}
uniformly for $k$ with $k=o(\sqrt{n})$. Thus, by a standard application of the Laplace method, we get
\begin{align*}
\mathrm{OTC}_n^{(3)}
\sim\bigg(\sum_{k\geq 0}\frac{1}{k!(k+1)!}\bigg)\cdot\frac{n(3n-3)!}{6^{n-1}}=I_1(2)\cdot\frac{n(3n-3)!}{6^{n-1}},
\end{align*}
which is the first claim from Theorem~\ref{Th-1}, (ii); the second follows from this by another application of Stirling's formula. Moreover, since
\[
\mathbb{P}(R_n^{(3)}=n-1-k)=\frac{\mathrm{OTC}^{(3)}_{n,n-1-k}}{\mathrm{OTC}_n^{(3)}},
\]
the result from Theorem~\ref{Th-2}, (ii) follows from the above two expansions, too.

Next, we consider the case $d\geq 4$.
The details of the proof are the same as above, with the main difference that the expansion~\eqref{exp-otc} now becomes
\[
\mathrm{OTC}^{(d)}_{n,n-1-k}=\left(\frac{d^2d!}{2d^d}\right)^k\frac{1}{k!(k+1)!}\cdot n^{(3-d)k}\cdot\frac{n(dn-d)!}{(d!)^{n-1}}\left(1+{\mathcal O}\left(\frac{1+k^2}{n}\right)\right)
\]
uniformly for $k$ with $k=o(\sqrt{n})$.
For $d\geq 4$ this expansion contains the (non-trivial decreasing) factor $n^{(3-d)k}$,
and therefore $\mathrm{OTC}_n^{(d)}$ is now asymptotically dominated by $\mathrm{OTC}^{(d)}_{n,n-1}$ (proving Theorem~\ref{Th-1}, (iii)) and the limiting distribution of $n-1-R_n^{(d)}$ is degenerate (proving Theorem~\ref{Th-2}, (iii)).
\qed

\subsection{Sackin Index}\label{Sackin}

In this section, we investigate another shape parameter for random one-component tree-child networks, namely, a Sackin index. For phylogenetic trees, the investigation of this index has a long history and many results have been proved; see \cite{FiHeKeKuWi,FiHeKeKuWi2} for a summary of some of these results. For tree-child networks, \cite{Zh} recently gave the first generalization of the Sackin index to networks.
We simplify this approach
(which treated the bicombining case) and extend the main results to the $d$-combining case.

Until the end of this subsection, we assume that $N$ is a one-component tree-child network with $n$ leaves and $k$ reticulation nodes.
Let us first state the definition of the Sackin index from \cite{Zh}.

\begin{df}[Sackin index]
The Sackin index of $N$, in symbols $S(N)$, is defined as the sum over all leaves of the lengths of the longest paths to the leaves.
\end{df}

For the analysis, following \cite{Zh}, we define a second index.
The {\it top tree component} of $N$, in symbols $C(N)$, is defined as the tree obtained from $N$ by deleting all reticulation nodes together with their incident edges and their leaves below. Note that we retain the parents of all reticulation nodes and thus the top tree component is {\it not} a binary tree, it also has unary nodes.
We denote by $P(N)$ the (total) path length of $C(N)$, i.e., the sum over all root-distances of all vertices. This index is related to the Sackin index as follows.

\begin{lmm}\label{rel-Sackin-P}
For all one-component tree-child networks $N$ with at least two leaves, we have for $d=2$
\[
S(N)\leq P(N)+1\leq 2 S(N)
\]
and for $d\geq 3$
\begin{equation}\label{chain-ineq}
S(N)\leq P(N)\leq d S(N).
\end{equation}

\vspace*{0.1cm}\noindent Consequently, regardless of the value of $d$, $S(N)=\Theta(P(N))$.
\end{lmm}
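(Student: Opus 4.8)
The plan is to relate the Sackin index $S(N)$ and the top-tree path length $P(N)$ directly, by comparing the two quantities contribution-by-contribution over the relevant nodes. The key observation is that both quantities are built from root-distances, but measured over different vertex sets and with a different notion of ``distance'' (longest path for $S$, unique tree-path for $P$), so I would set up a dictionary between them. First I would record that in $C(N)$ every leaf of $N$ that sits below a tree node corresponds to a leaf of $C(N)$, while each leaf hanging below a reticulation node has been deleted. Thus $S(N)$ sums longest-path lengths to all $n$ leaves of $N$, whereas $P(N)$ sums (unique) root-distances over all vertices of $C(N)$, including its internal tree nodes and the retained parents of reticulation nodes.

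For the lower bound $S(N)\le P(N)$, the plan is to exhibit an injection from the leaves of $N$ (the terms contributing to $S$) into the vertices of $C(N)$ (the terms contributing to $P$) that does not increase the distance. For a leaf below a tree node, its longest path in $N$ should coincide with (or be bounded by) a root-path in $C(N)$; for a leaf below a reticulation node, I would charge it to one of the retained parents of that reticulation node, whose root-distance in $C(N)$ is at least the longest-path length to the leaf minus a small constant. Summing these inequalities over all $n$ leaves gives $S(N)\le P(N)$ (with the $+1$ correction absorbing boundary effects in the $d=2$ case). The one-component structure is essential here: because every reticulation node is immediately followed by a leaf, the deleted portion of $N$ is extremely shallow, so the longest path to any leaf of $N$ differs from a root-path in $C(N)$ by at most one edge.

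For the upper bound $P(N)\le d\,S(N)$ (respectively $P(N)+1\le 2S(N)$ when $d=2$), the plan is to bound the path length of $C(N)$ by a multiple of $S(N)$ using a counting argument on how many internal vertices of $C(N)$ can accumulate along root-paths. The governing combinatorial fact is the node count from equation~\eqref{tree-nodes}: in a $d$-combining one-component network the number of tree nodes is $t=n+(d-1)k-1$ and the number of reticulation parents retained in $C(N)$ is $dk$, so $C(N)$ has $O(dn)$ vertices, each at root-distance bounded by the depth of the network, which is itself bounded by the maximal longest-path length appearing in $S(N)$. Making this precise, I would bound each vertex's root-distance in $C(N)$ by the longest-path length of some leaf below it, and control the multiplicity with which a given leaf gets charged by the branching degree $d$; this multiplicity is exactly what produces the factor $d$ (respectively the factor $2$ for binary trees where no reticulation-parent splitting occurs).

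The main obstacle I anticipate is the mismatch between ``longest path'' (which defines $S$) and ``unique root-path in the tree $C(N)$'' (which defines $P$): in a network with reticulations these genuinely differ, and I must verify that in the one-component setting the discrepancy is uniformly bounded so that the linear comparison survives. Concretely, the delicate step is checking that when I charge a leaf below a reticulation node to a retained parent, the longest path to that leaf in $N$ really does route through that parent up to a bounded additive error, and that no leaf is overcharged beyond a factor $d$. Once the injection and the charging argument are set up carefully using the one-component property and the node-count identity \eqref{tree-nodes}, the two chains of inequalities follow, and $S(N)=\Theta(P(N))$ with absolute implied constants is immediate.
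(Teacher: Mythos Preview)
Your outline has the right shape for the upper bound but contains a real gap in the lower bound. You propose to inject each reticulation leaf into \emph{one} of its grandparents in $C(N)$. Quantitatively this cannot work: if $v$ lies below a reticulation node with grandparents $p_1,\ldots,p_d$, then $\mathrm{depth}(v)=2+\max_i \mathrm{depth}(p_i)$, so charging $v$ to the single best grandparent loses exactly $2$, not a boundary term you can ``absorb into the $+1$''. Summed over the $k$ reticulation leaves this deficit is $2k$, which can be of order $n$. The paper's key device is instead to charge $v$ to \emph{all} $d$ grandparents at once and use, for $d\ge 3$,
\[
2+\max_{1\le i\le d}\mathrm{depth}(p_i)\ \le\ \sum_{i=1}^{d}\mathrm{depth}(p_i),
\]
valid because every $p_i$ has depth $\ge 1$ and there are at least two non-maximal terms. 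This is exactly what fails when $d=2$ and is the source of the $+1$ there; your plan does not isolate this mechanism.

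For $P(N)\le d\,S(N)$, your second idea---bound each vertex's depth by that of some leaf below it and control the multiplicity---is essentially the paper's argument. But the paragraph invoking the node count from (\ref{tree-nodes}) and ``$C(N)$ has $O(dn)$ vertices'' is a red herring: a bound on the number of vertices times a bound on the maximal depth is far too coarse to give a linear comparison with $S(N)$. What is actually needed is an explicit ancestor-respecting bijection $\phi$ from the remaining internal vertices $R=V(C(N))\setminus(L_T\cup P)$ to $L_T$; then each $L_T$-leaf is charged at most twice (once by itself, once by $\phi^{-1}$) and each $L_R$-leaf at most $d$ times (once per grandparent), which yields the factor $d$. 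You hint at ``some leaf below it'' but do not supply this bijection or the multiplicity bookkeeping, and without them the bound does not follow.
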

\begin{proof}
The result for $d=2$ was given in \cite{Zh} and the ideas of the proof in \cite{Zh} can be used to handle also the case $d\geq 3$. For the readers convenience, we give some of the details.

First, we define the following sets of vertices of $N$:
\begin{enumerate}
\item[(i)] $L_{T}$ collects leaves of $N$ which are not below a reticulation node;
\item[(ii)] $L_{R}$ collects leaves of $N$ which are below a reticulation node;
\item[(iii)] $P$ collects the parents of all reticulation nodes;
\item[(iv)] $R=V(C(N))\setminus (L_{T} \cup P)$ collects the remaining vertices in $C(N)$. (Here, $V(C(N))$ denotes the set of vertices of $C(N)$.)
\end{enumerate}

Then,
\[
S(N)=\sum_{v \in L_{T} \cup L_{R}} \mathrm{depth}(v),
\]
where the depth of $v$ is the longest distance to the root.

Now, in order to prove the lower bound of $P(N)$, note that for $v\in L_{R}$, we have
\[
\mathrm{depth}(v)=2+\max\{\mathrm{depth}(p)\ :\ p\ \text{is a grandparent of $v$}\}\leq \sum_{p\  \text{is a grandparent of $v$}}\,\mathrm{depth}(p).
\]
Here, for the last inequality, we used that $d\geq 3$. Thus,
\[
S(N)\leq \sum_{v \in L_{T} \cup P} \mathrm{depth}(v)\leq\sum_{v \in L_{T} \cup P \cup R} \mathrm{depth}(v)=P(N)
\]
which shows the first part of the claim in (\ref{chain-ineq}).

Next, in order to show the upper bound of $P(N)$, we first recall that there exists a bijection $\phi$ from $R$ to $L_{T}$ such that $w$ is an ancestor of $\phi(w)$ for all $w\in R$; see the appendix of \cite{Zh} for details. Consequently,  $\mathrm{depth}(w)\leq \mathrm{depth}(\phi(w))$ and thus,
\begin{align*}
P(N)&\leq \sum_{v \in L_{T}} \mathrm{depth}(v) + d \cdot \sum_{v \in L_{R}} \mathrm{depth}(v) + \underbrace{\sum_{w \in R} \mathrm{depth}(\phi(w))}_{\displaystyle= \sum_{v \in L_{T}} \mathrm{depth}(v)}\\&=2\,\sum_{v \in L_{T}} \mathrm{depth}(v) + d\,\sum_{v\in L_{R}}\mathrm{depth}(l)\leq d\, S(N).
\end{align*}
This completes the proof.
\end{proof}

We next analyze $P(N)$. Denote by $\mathcal{OC}_{n,k}^{(d)}$ the set of all one-component tree-child networks with $n$ leaves and $k$ reticulation nodes where the leaves below the reticulation nodes are labeled by the $k$ {\it largest} labels from $\{1,\ldots,n\}$. Note that
\[
\mathrm{OTC}_{n,k}^{(d)}=\binom{n}{k}\vert\mathcal{OC}_{n,k}^{(d)}\vert,
\]
since all one-component tree-child networks are obtained from the networks in $\mathcal{OC}_{n,k}^{(d)}$ by re-labeling the leaves. Also, set
\[
P(\mathcal{OC}_{n,k}^{(d)}):=\sum_{N\in\mathcal{OC}_{n,k}^{(d)}}P(N).
\]

For this quantity, we obtain a (surprisingly) simple recurrence which then yields an exact formula; see \cite[Theorem~3]{Zh} for the bicombining case.

We first prove this recurrence with a computational approach (which extends and simplifies the one from \cite{Zh}). Then, we give a simpler and more elegant combinatorial proof of the solution of the recurrence in two (long) remarks below the proof (Remark~\ref{rem:PathLengthUnaryBinary} and Remark~\ref{rem:PathLengthToptree}). (This second proof however requires the knowledge of the final result which we found with the first proof.) The reader who is only interested in the combinatorial proof might immediately skip to these remarks as the arguments in the second proof are independent from the arguments in the first proof.


\begin{pro}
\label{pro:SackinOCnk}
$P(\mathcal{OC}_{n,k}^{(d)})$ satisfies the recurrence
\[
P(\mathcal{OC}_{n,k}^{(d)}) = \binom{2n+(d-2)k}{d} P(\mathcal{OC}_{n-1,k-1}^{(d)})
\]
with ${\displaystyle P(\mathcal{OC}_{n,0}^{(d)}) = (2n)!!-(2n-1)!!}$ as initial condition. Consequently,
\begin{equation}\label{PdOC}
    P(\mathcal{OC}_{n,k}^{(d)}) =
    \frac{(2n+(d-2)k)!}{(d!)^{k}(2n-2k)!}\,
    \big((2n-2k)!!-(2n-2k-1)!!\big).
\end{equation}
\end{pro}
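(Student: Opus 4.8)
The plan is to prove the recurrence first and then obtain the closed form by iterating it together with the base case $k=0$.

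\emph{Setting up the recurrence.} I would reuse the bijective construction from the proof of Theorem~\ref{formula-OTC}, restricted to the canonically-labelled family $\mathcal{OC}_{n,k}^{(d)}$. Here the new reticulation leaf always receives the largest label, so no relabelling is needed and the construction becomes a genuine bijection: every $N\in\mathcal{OC}_{n,k}^{(d)}$ arises from a unique pair $(N',M)$, where $N'\in\mathcal{OC}_{n-1,k-1}^{(d)}$ and $M$ is a size-$d$ multiset of the candidate edges of $N'$ (the $d$ parents of the new reticulation node are inserted along these edges, with repetitions allowed). The key observation is that the candidate edges of $N'$ are exactly the edges of the top tree component $C(N')$ (a short count shows both number $m:=2n+(d-2)(k-1)-3$), and that passing to top tree components turns the construction into the operation of subdividing the edges of $C(N')$ according to $M$. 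Thus, writing $C(N')_M$ for the tree obtained from $C(N')$ by inserting $a_i$ unary nodes on edge $e_i$ (where $M=(a_1,\dots,a_m)$, $\sum_i a_i=d$), one has $C(N)=C(N')_M$, hence $\sum_{N'}\sum_M P(C(N')_M)=P(\mathcal{OC}_{n,k}^{(d)})$.

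\emph{The effect of subdivision on the path length.} For a fixed tree $C=C(N')$ with edges $e_i=(u_i,v_i)$, let $\mathrm{depth}(\cdot)$ denote root-distance in $C$ and let $\sigma(v_i)$ be the number of nodes at or below $v_i$. Inserting the new nodes raises the depth of each old node by the number of new nodes above it and adds the depths of the new nodes themselves; bookkeeping these effects gives
\begin{equation*}
P(C_M)=P(C)+\sum_i a_i\sigma(v_i)+\sum_i a_i\,\mathrm{depth}(u_i)+\sum_i a_i A(u_i)+\sum_i\binom{a_i+1}{2},
\end{equation*}
where $A(u_i)=\sum_{e_l\text{ above }e_i}a_l$ records how many new nodes lie above $u_i$. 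I expect this coupling term---the fact that nodes inserted higher up push down nodes inserted lower down---to be the main obstacle, both to set up correctly and to sum.

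\emph{Summing over insertions.} I would then sum over all multisets $M$ with $\sum_i a_i=d$. Two inputs make everything collapse. First, the tree identities $\sum_i\sigma(v_i)=P(C)$ and $\sum_i\mathrm{depth}(u_i)=P(C)-m$ (both rearrangements of $\sum_v\mathrm{depth}(v)$), together with $|\{(l,i):e_l\text{ above }e_i\}|=\sum_i\mathrm{depth}(u_i)$ for the coupling term. Second, the symmetric multiset moments
\begin{equation*}
\sum_M\binom{a_i}{r}=\binom{m+d-1}{m-1+r},\qquad \sum_M\binom{a_i}{r}\binom{a_j}{s}=\binom{m+d-1}{m-1+r+s}\ \ (i\neq j),
\end{equation*}
which follow from the generating function $\prod_i x^{r_i}/(1-x)^{r_i+1}$. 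Setting $S_1=\sum_M a_i$, $S_2=\sum_M a_i^2$, $S_{11}=\sum_M a_i a_j$, these yield the two key facts $S_2-S_1=2S_{11}$ and $\binom{m+d-1}{d}+2S_1+S_{11}=\binom{m+d+1}{d}$ (the latter by two applications of Pascal's rule). The first makes the constant term vanish and the second assembles the coefficient, giving $\sum_M P(C_M)=\binom{m+d+1}{d}P(C)$ for every $N'$; since $m+d+1=2n+(d-2)k$, summing over $N'$ produces the recurrence $P(\mathcal{OC}_{n,k}^{(d)})=\binom{2n+(d-2)k}{d}P(\mathcal{OC}_{n-1,k-1}^{(d)})$.

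\emph{Base case and closed form.} For $k=0$ the networks are phylogenetic trees and $C(N)=N$, so $P(\mathcal{OC}_{n,0}^{(d)})$ is the total path length over all phylogenetic trees with $n$ leaves; a one-leaf-at-a-time argument gives $Q_n=2n\,Q_{n-1}+(2n-3)!!$ with $Q_1=1$, which solves to $(2n)!!-(2n-1)!!$. Finally I would iterate the recurrence from $(n,k)$ down to $(n-k,0)$: the product $\prod_{i=0}^{k-1}\binom{2n+(d-2)k-di}{d}$ telescopes to $\dfrac{(2n+(d-2)k)!}{(d!)^k(2n-2k)!}$, and multiplying by the base case yields exactly~\eqref{PdOC}.
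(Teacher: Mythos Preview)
Your proof is correct and follows the same overall strategy as the paper: both use the canonical-label restriction of the bijection from Theorem~\ref{formula-OTC} and compute how the path length of the top tree changes when a multiset of $d$ edges is subdivided, then sum over all multisets and all networks in $\mathcal{OC}_{n-1,k-1}^{(d)}$; in both cases the three binomials $\binom{m+d-1}{d}$, $2\binom{m+d-1}{d-1}$, $\binom{m+d-1}{d-2}$ appear and are combined to $\binom{m+d+1}{d}=\binom{2n+(d-2)k}{d}$ via two applications of Pascal's rule. The organisational difference is in the bookkeeping: the paper splits $P(N')$ into sums over old and new vertices and evaluates the coupling contribution $\Sigma_3$ through a two-parameter sum that is collapsed by a generating-function calculation, whereas you parameterise everything edge-by-edge via $(a_i,\sigma(v_i),\mathrm{depth}(u_i),A(u_i))$ and reduce the multiset sums to the moment identities $\sum_M\binom{a_i}{r}\binom{a_j}{s}=\binom{m+d-1}{m-1+r+s}$; your identity $S_2-S_1=2S_{11}$ then kills the constant term in one line, which is slicker than the paper's treatment of $\Sigma_3$. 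You also supply the base-case recurrence $Q_n=2nQ_{n-1}+(2n-3)!!$ for the total path length of phylogenetic trees, which the paper simply cites.
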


\begin{proof}

Let $N$ be a network in $\mathcal{OC}_{n-1,k-1}^{(d)}$. We subsequently denote by $V(C(N))$ and $E(C(N))$ the vertex set and edge set of the top tree component, respectively. Moreover, for a $v\in V(C(N))$, we denote by $\delta_{C(N)}(v)$ the number of descendants and by $\alpha_{C(N)}(v)$ the number of ascendants of $v$ in $C(N)$. Note that the number of ascendants of $v$ in $N$ and $C(N)$ is the same, i.e., $\alpha_{C(N)}(v)=\alpha_N(v)$. Finally, we denote by $\mathcal{S}$ the set of multisets of $d$ edges of $C(N)$ (where edges are counted with repetition). Every $S\in\mathcal{S}$ corresponds to a set of edges where nodes are inserted in order to add an additional reticulation node with label $n$. This notion allows us to construct all networks of  $\mathcal{OC}_{n,k}^{(d)}$ from those of $\mathcal{OC}_{n-1,k-1}^{(d)}$. More precisely, for $S\in\mathcal{S}$, we construct a network $N'=N'(S)$ by inserting nodes into the $d$ edges from $S$ and connecting them with a new reticulation node whose child has label $n$. (Note that this is a similar construction to the one we used in the proof of Theorem~\ref{formula-OTC}.)

We consider now $P(N')$ which by definition is given by:
\[
P(N')=\sum_{v \in V(C(N'))} \alpha_{N'}(v).
\]
Partitioning vertices of $C(N')$ into vertices of $C(N)$ and the $d$ new ones which have been added to $N$ to construct $N'$, we have
\[
P(N')=\sum_{v \in V(C(N))} \alpha_{N'}(v) + \sum_{v \in D} \alpha_{N'}(v),
\]
where $D=V(C(N')) \setminus V(C(N))$ is the set of new vertices.

Now, to find a relation to $P(N)$, we replace $\alpha_{N'}$ by $\alpha_{N}$ and get
\begin{equation}\label{exp-PdN}
P(N')=\sum_{v \in V(C(N))} \alpha_{N}(v)+ \sum_{v \in V(C(N))} \beta(v) + \sum_{v \in D} \alpha_{N}(v^{\downarrow})+ \sum_{v \in D} \gamma(v),
\end{equation}
where
\begin{enumerate}
\item[(i)] $\beta(v)$ denotes the number of nodes in $D$ that are ascendants of $v$ in $C(N')$;
\item[(ii)] $v^{\downarrow}$ is closest descendant of $v$ in $C(N')$ that belongs to $V(C(N))$;
\item[(iii)] $\gamma(v)$ denotes the number of nodes in $D$ that are ascendants of $v$ in $C(N')$.
\end{enumerate}

Observe that the second sum can be rewritten as:
\[
\sum_{v\in V(C(N))}\beta(v)=\sum_{v\in D}\delta_{C(N)}(v^{\uparrow}),
\]
where $v^{\uparrow}$ is the closest ascendant of $v$ in $C(N')$ which belongs to $V(C(N))$. Thus, (\ref{exp-PdN}) can be rewritten into:
\[
P(N')=P(N) +\sum_{v\in D} \left(\delta_{C(N)}(v^{\uparrow})+ \alpha_{N}(v^{\downarrow})\right)+\sum_{v \in D}\gamma(v).
\]

Next, we sum both sides over $S \in \mathcal{S}$, which on the right-hand side gives three sums which we denote by $\Sigma_1, \Sigma_2$ and $\Sigma_3$, respectively.

First, for $\Sigma_1$, note that $E:=\vert E(C(N))\vert=2(n-k)-1+dk$. Thus,
\[
\Sigma_1=\sum_{S\in \mathcal{S}}P(N) =\vert\mathcal{S}\vert P(N)= \binom{E+d-1}{d} P(N)
\]
since
\[
\vert\mathcal{S}\vert=\#\{x_1+\cdots+x_E=d\ : \ x_i\ \text{non-negative integers},\ 1\leq i\leq E\},
\]
where each $x_i$ corresponds to an edge from $E(C(N))$ (and counts how many times that edge occurs in the set $S$).

Next, for $\Sigma_2$, set $S= \{e_1,\ldots, e_d\}$ and 
\[
B(e):= \delta_{C(N)}(v^{\uparrow}) + \alpha_{N}(v^{\downarrow}),
\]
where $e$ is an edge in $S$ into which $v$ is inserted and $v^{\uparrow}$ and $v^{\downarrow}$ are initial and end point of $e$, respectively. Then, \[
\Sigma_2=\sum_{S\in \mathcal{S}} \left( B(e_1) + \cdots + B(e_{d}) \right).
\]
We fix an edge $e$ in $C(N)$ and count the number of times $B(e)$ occurs in the above sum. This gives,
\[
\Sigma_2=\sum_{e\in E(C(N))}\left(\sum_{x_1+\cdots +x_E = d} x_i\right)B(e)=2\, \binom{E+d-1}{d-1}P(N),
\]
where $x_i$ inside the second sum is the term corresponding to edge $e$ and for the third expression, we used that
\[
\sum_{x_1+\cdots+x_E=d}x_i = \frac{1}{E} \sum_{x_1+\cdots+x_E=d}x_1 + \cdots + x_E = \binom{E+d-1}{d-1},
\]
which holds for all $1\leq i\leq E$ by symmetry. Moreover, we used that
\[
\sum_{e \in E(C(N))} B(e) = 2\,P(N).
\]

Finally, for $\Sigma_3$, we first give an explicit formula
\[
\Sigma_3= \sum_{v \in C(N) \setminus \{\rho\}}\sum_{0 \leq i + j \leq d} \left( ij + \frac{i(i-1)}{2}\right)\binom{\alpha_{N}(v)-2+j}{j}\binom{E-\alpha_{N}(v)-1+d-i-j}{d-i-j},
\]
by the following combinatorial steps:
\begin{enumerate}[(i)]
\item Choose a vertex $v$ in $C(N)$ which is not the root vertex $\rho$. We consider the incoming edge $e$ of $v$;
\item Choose pair $(i,j)$ with $0\leq i+j\leq d$. Here, the meaning of $i$ is that $i$ nodes are inserted into $e$ and $j$ nodes are inserted into an edge which lies on the path from $\rho$ to $v$ (excluding $e$);
\item So far, the contribution of $v$ to $\Sigma_3$ is
\[
ij + \sum_{\ell=1}^{i-1}\ell=ij+\frac{i(i-1)}{2};
\]
\item Next, there are $\binom{\alpha_{N}(v)-2+j}{j}$ ways of choosing the edges into which the $j$ nodes are inserted;
\item Finally, the second binomial coefficient counts the number of ways that the remaining $d-i-j$ new nodes are inserted into edges which are not on the path from $\rho$ to $v$.
\end{enumerate}
In order to simplify the formula, we set
\[
\alpha:=\alpha_{N}(v),\qquad A:=\alpha-2,\qquad  B:=E-\alpha-1,\qquad M:=d-i.
\]

\newpage

Then, for the inner sum $\Sigma_3$, we have
\begin{align*}
& \sum_{i=0}^{d} \left( \small i \sum_{j=0}^{M} j \binom{A+j}{j} \binom{B+M-j}{M-j} + \frac{i(i-1)}{2} \sum_{j=0}^{M} \binom{A+j}{j} \binom{B+M-j}{M-j} \right) \\
& =\sum_{i=0}^{d}\left(i(\alpha - 1) [z^{M-1}] (1-z)^{-A-2} (1-z)^{-B-1} +  \frac{i(i-1)}{2} [z^M] (1-z)^{-A-1} (1-z)^{-B-1}\right) \\
& = (\alpha - 1) \sum_{i=0}^{d} i \binom{A+B+1+M}{M-1} + \sum_{i=0}^{d} \frac{i(i-1)}{2} \binom{A+B+1+M}{M} \\
& = (\alpha - 1) [z^{d-2}] (1-z)^{-A-B-5} + [z^{d-2}]  (1-z)^{-A-B-5} \\
&= \alpha\,\binom{E+d-1}{d-2}.
\end{align*}
Plugging this into the above formula for $\Sigma_3$ gives
\[
\Sigma_3=\binom{E+d-1}{d-2}\sum_{v\in V(C(N))\setminus\{\rho\}}\alpha_N(v)=\binom{E+d-1}{d-2}P(N).
\]

Finally, summing the above expressions for $\Sigma_1, \Sigma_2$ and $\Sigma_3$ and summing over all $N$ in $\mathcal{OC}_{n-1,k-1}^{(d)}$ gives the surprisingly simple recurrence:
\begin{align}
\label{eq:PDOCrec}
P(\mathcal{OC}_{n,k}^{(d)}) = \binom{2n+(d-2)k}{d}\,P(\mathcal{OC}_{n-1,k-1}^{(d)}).
\end{align}
The initial condition, namely, the formula for $P(\mathcal{OC}_{n,0}^{(d)}) = (2n)!!-(2n-1)!!$ is well-known; see, e.g., \cite{Zh}. From this, \eqref{PdOC} is obtained by iteration. This concludes the proof.
\end{proof}

\begin{rem}
\label{rem:PathLengthUnaryBinary}
A {\it unary-binary tree} is a rooted tree whose nodes are either leaves (out-degree $0$), unary (out-degree $1$), or binary (out-degree $2$).
Then, we observe that the set $C(\mathcal{OC}_{n,k}^{(d)})$ of top trees obtained from all elements of $\mathcal{OC}_{n,k}^{(d)}$ is equal to the set of all unary-binary trees with $n-k$ labeled leaves and $dk$ unary nodes.
Hence, \eqref{PdOC} nearly gives the path length of unary-binary trees, up to an overcount (which is actually a factor) that is related to reticulation nodes.
Now, we adapt the construction of the proof of Theorem~\ref{formula-OTC} to build $\mathcal{OC}_{n,k}^{(d)}$ in order to determine this factor.
Steps~\eqref{rem:OTCdirect-step1} and \eqref{rem:OTCdirect-step2} remain the same,
and step~\eqref{rem:OTCdirect-step4} is not performed as the leaves of reticulation nodes have maximal labels in $\mathcal{OC}_{n,k}^{(d)}$.
It remains to consider step~\eqref{rem:OTCdirect-step3}, which needs to be adapted.
Observe that adding reticulation nodes and unary edges corresponds to the factor $\binom{dk}{d,d,\dots,d}$; see Figure~\ref{fig:Sackin-OTC}.
Thus, dividing~\eqref{PdOC} by this factor, we get that the path length of unary-binary trees with $n-k$ labeled leaves and $dk$ unary nodes is equal to
\begin{align}
    \label{eq:PathLengthUnaryBinary}
    \big( (2(n-k))!!-(2(n-k)-1)!! \big) \cdot \binom{2(n-k)+dk}{dk}.
\end{align}

\begin{figure}
\centering
\includegraphics[scale=0.985]{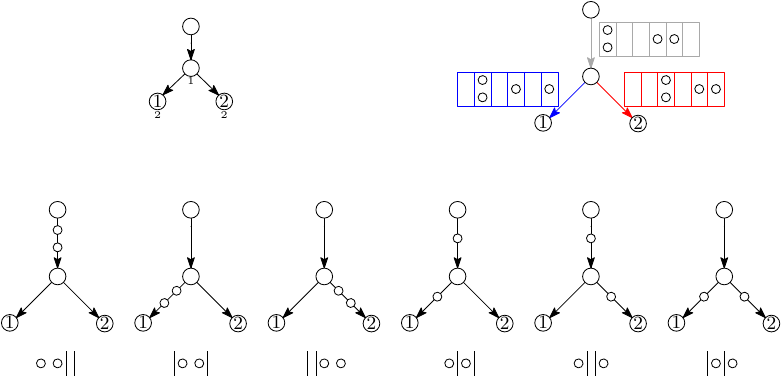}
\caption{%
Construction of $\mathcal{OC}_{3,1}^{(2)}$ in Remark~\ref{rem:PathLengthToptree} ($K=2$, $M=3$).
(Top, left) The only phylogenetic tree with $2$ leaves; path lengths below each node and total path length $5$.
(Bottom) $\binom{4}{2}=6$ top trees $\mathcal{OC}_{2,1}^{(2)}$ created from it after adding $2$ unary nodes and the respective "balls and bars" diagrams.
(Top, right) Superposition of all $6$ top trees.
}
\label{fig:Sackin-OTC}
\end{figure}
\end{rem}

\begin{rem}
\label{rem:PathLengthToptree}
The surprising simplifications of Proposition~\ref{pro:SackinOCnk} also have a direct combinatorial explanation.
By Remark~\ref{rem:PathLengthUnaryBinary}, it suffices to show that the path length of unary-binary trees with $n-k$ labeled leaves and $dk$ unary nodes is equal to~\eqref{eq:PathLengthUnaryBinary}.

The main idea is to use again the construction from the proof of Theorem~\ref{formula-OTC} to build the networks in $\mathcal{OC}_{n,k}^{(d)}$ bottom-up.
We start in step~\eqref{rem:OTCdirect-step1} with a phylogenetic tree (i.e., without reticulation nodes) with $L:=n-k$ leaves that is weighted by the path length.
In the sequel, we call the nodes/edges of this phylogenetic tree, the original nodes/edges.
As noted before, the total weight of such trees is $P(\mathcal{OC}_{L,0}^{(d)}) = (2L)!!-(2L-1)!!$.
Then, we place $K:=dk$ unary nodes along the $M:=2L-1$ edges.
This gives a new structure enumerated by
\begin{align}
    \label{eq:unarybinarystep1}
    P(\mathcal{OC}_{L,0}^{(d)})\binom{K+M-1}{K}.
\end{align}
Note that this is nearly equal to what we want to prove in~\eqref{eq:PathLengthUnaryBinary}, except that the binomial coefficient should be~$\binom{K+M+1}{K}$.
What remains to be done, is to properly change the weights, as they do not correspond to the path lengths anymore, due to the additional unary nodes.

Let us start with a simple observation:
Set all edge weights to $w$ in a phylogenetic tree with $L$ leaves.
Then the path length is equal to $w$ times the path length of the unweighted tree.

Having this observation in mind, we interpret unary nodes as weights on the original edges.
We do this in three steps; the process is visualized in Figures~\ref{fig:Sackin-OTC} and~\ref{fig:Sackin-3steps} for $\mathcal{OC}_{3,1}^{(2)}$.
The main idea is to superimpose all $\binom{K+M-1}{K}$ created trees and group them cleverly.

\begin{figure}
\centering
\includegraphics[scale=0.985]{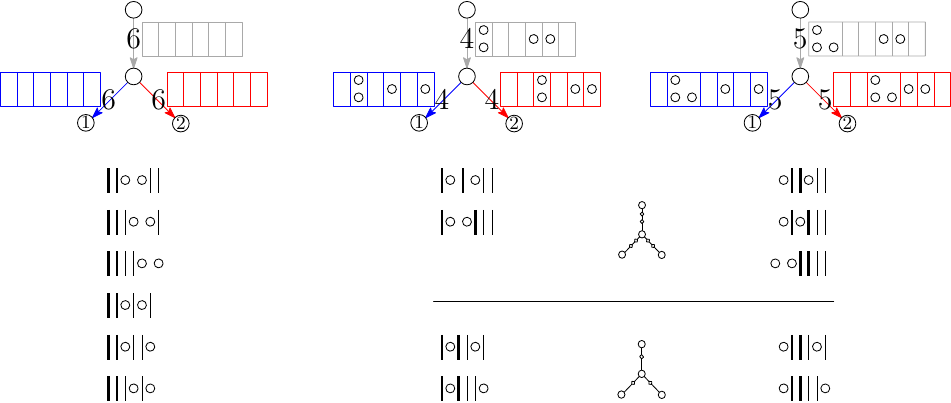}
\vspace*{0.2cm}\caption{%
Enumeration of $P(\mathcal{OC}_{3,1}^{(2)})$ in Remark~\ref{rem:PathLengthToptree} using superposition ($K=2$, $M=3$); see Figure~\ref{fig:Sackin-OTC}.
(Left) Step~1: Each new instance increases edge weight by one; total $\binom{4}{2}=6$.
(Middle) Step~2: Correct path length of original nodes; sum unary nodes per edge.
(Right) Step~3: Add path length for unary nodes; for $i$ nodes weight $1+2+\dots+i$.
(Bottom) "Balls and bars" corresponding to each step; the two added bars are shown bold.
}
\label{fig:Sackin-3steps}
\end{figure}
\vspace*{0.2cm}

First, we ``forget'' about the unary nodes, and interpret the $\binom{K+M-1}{K}$ new instances of a phylogenetic tree as the same phylogenetic tree.
Hence, the weights of all edges change to $\binom{K+M-1}{K}$ and the total number of these weighted phylogenetic trees is~\eqref{eq:unarybinarystep1}.
In the sequel, the following direct interpretation of the binomial coefficient using
``balls and bars'' is useful:
The $K$ unary nodes correspond to balls and the edges $M$ to bins, which are modeled by $M-1$ bars. Then each distribution of unary nodes on the edges corresponds to choosing $K$ out of $M$ bins with repetitions allowed; see Figure~\ref{fig:Sackin-OTC}.

Second, we correct the weights of all original edges.
Note that each unary node splits an edge into two and thereby increases the path length of any node below by one, or, equivalently, increases the weight of the associated original edge by one.
Hence, the weight of an edge increases by the number of unary nodes it contains.
In the superposition of all trees, all edges have the same number of unary nodes, as each edge is chosen equally often.
The weight is equal to the sum of all unary nodes placed onto a fixed edge among the $\binom{K+M-1}{K}$ configurations.
As there are $M$ edges and $K$ unary nodes (all equally distributed) we need a factor of $K/M$. Thus,
\begin{align*}
    P(\mathcal{OC}_{L,0}^{(d)})\binom{K+M-1}{K} \left( 1 + \frac{K}{M}\right)
    = P(\mathcal{OC}_{L,0}^{(d)})\binom{K+M}{K}.
\end{align*}
A direct interpretation of the last formula is as follows:
Fix the root edge. Mark one of the unary nodes of the root edge.
Like this, we create as many instances (with different markers) as there are unary nodes on the root edge in all configurations, i.e., we count the total number of unary nodes on the root edge.
We model this marker, by splitting the root edge just after this marker into two parts.
Hence, we have now $M+1$ edges to distribute $K$ unary nodes.
We interpret the case when the first part of the root edge is empty, as the $\binom{K+M-1}{K}$ created weighted trees from placing unary nodes.
Thus, there are $\binom{K+M}{K}$ many choices.

Third, we assign weights for the new unary nodes.
Consider an original edge with $i$ unary nodes.
To give the lowest one its correct weight, we remove the original node just below and replace it by the lowest unary node.
Then, we may use the idea of the previous step:
The weight of the edge above is now $i$, as $i-1$ unary nodes remain.
In the superposition, each such sequence of $i$ unary nodes appears on each original edge the same number of times.
Hence, we collect a weight $i$ on each edge, and the total weight gives the correct weights for all lowest unary nodes.
Then, we repeat this process for the next unary node, with an edge weight $i-1$, etc.
In total, an original edge with $i$ unary nodes gives rise to a total weight of $1+2+\dots+i$.
For an arbitrary edge, this is now, analogously to before, equal to the sum of unary nodes that are before the marked node from the previous step.
Thus, this gives a factor $K/(M+1)$ and we get
\begin{align*}
    P(\mathcal{OC}_{L,0}^{(d)})\binom{K+M}{K} \left( 1 + \frac{K}{M+1}\right)
    = P(\mathcal{OC}_{L,0}^{(d)})\binom{K+M+1}{K}.
\end{align*}
Note, as before, this has a direct combinatorial interpretation, where we split the first part of the root edge again.
Hence, there are now $M+2$ edges to distribute $K$ unary nodes, as claimed.
Moreover, note that the last formula also gives the path length of unary binary trees with $L$ labeled leaves and $K$ unary nodes ($M=2L-1$); compare with Remark~\ref{rem:PathLengthUnaryBinary}.
\end{rem}

Using Proposition~\ref{pro:SackinOCnk}, we are now ready to complete our analysis. Let $P^{(d)}_n$ be the path length of the top tree component of a random one-component tree-child network with $n$ leaves. Then,
\[
    {\mathbb E}(P^{(d)}_n)=\frac{\sum_{k=0}^{n-1}\binom{n}{k}P(\mathcal{OC}_{n,k}^{(d)})}{\mathrm{OTC}_n^{(d)}}.
\]
Applying the Laplace method to the numerator and using Theorem~\ref{Th-1} gives the following result.
\begin{pro}
\begin{itemize}
\item[(i)] For $d=2$ (bicombining case), we have
\[
{\mathbb E}(P^{(2)}_n)\sim 2 \sqrt{\pi}n^{7/4}.
\]
\item[(ii)] For $d=3$, we have
\[
{\mathbb E}(P^{(3)}_n) \sim \frac{9(\cosh(2)- I_0(2))}{2I_1(2)}\,n^2,
\]
where the constant is approximately $4.19438713$.
\item[(iii)] For $d\geq 4$, we have
\[
{\mathbb E}(P^{(d)}_n)\sim\frac{d^2}{2}\,n^2.
\]
\end{itemize}
\end{pro}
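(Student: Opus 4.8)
The plan is to start from the exact quotient
\[
\mathbb{E}(P_n^{(d)}) = \frac{\sum_{k=0}^{n-1}\binom{n}{k}\,P(\mathcal{OC}_{n,k}^{(d)})}{\mathrm{OTC}_n^{(d)}}
\]
and rewrite each summand in the numerator as $\mathrm{OTC}_{n,k}^{(d)}$ times an explicit factor, so that the whole expression becomes an expectation against the law of $R_n^{(d)}$, whose limit behaviour is already supplied by Theorem~\ref{Th-2}. Concretely, I divide $\binom{n}{k}$ times formula~\eqref{PdOC} by the formula for $\mathrm{OTC}_{n,k}^{(d)}$ from Theorem~\ref{formula-OTC}, simplifying the double factorials via $(2m)!!=2^m m!$ and $(2m-1)!!=(2m)!/(2^m m!)$. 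I expect the ratio to collapse to
\[
r_k := \frac{\binom{n}{k}\,P(\mathcal{OC}_{n,k}^{(d)})}{\mathrm{OTC}_{n,k}^{(d)}} = (2n+(d-2)k)(2n+(d-2)k-1)\,g(n-k), \qquad g(m)=\frac{1}{2m}\left(\frac{4^m}{\binom{2m}{m}}-1\right),
\]
whence $\mathbb{E}(P_n^{(d)})=\mathbb{E}\!\left[r_{R_n^{(d)}}\right]$. Since $R_n^{(d)}=n(1+o(1))$ in all three regimes, the prefactor satisfies $(2n+(d-2)R_n^{(d)})(2n+(d-2)R_n^{(d)}-1)\sim d^2 n^2$, so everything reduces to understanding $\mathbb{E}[g(n-R_n^{(d)})]$. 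Here I will use that $g$ is bounded with $g(1)=\tfrac12$, and that $g(m)\sim\tfrac{\sqrt\pi}{2\sqrt m}$ as $m\to\infty$, which follows from Stirling's estimate $\binom{2m}{m}\sim 4^m/\sqrt{\pi m}$.

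For $d\ge4$ and $d=3$ the analysis is \emph{local}, because the limit law of $n-1-R_n^{(d)}$ is supported near $0$. Writing $k=n-1-j$ and inserting the uniform expansions of $\mathrm{OTC}_{n,n-1-j}^{(d)}$ from the proof of Theorems~\ref{Th-1} and~\ref{Th-2} (valid for $j=o(\sqrt n)$), the numerator factorises as $d^2 n^2$ times the leading normalisation times $\sum_j g(j+1)\,\mathbb{P}(n-1-R_n^{(d)}=j)\,(1+o(1))$, the errors $O((1+j^2)/n)$ being harmless because of the factorial decay of the probabilities. For $d\ge4$ only $j=0$ survives, giving $\mathbb{E}[g(n-R_n^{(d)})]\to g(1)=\tfrac12$ and hence $\mathbb{E}(P_n^{(d)})\sim\tfrac{d^2}{2}n^2$.

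For $d=3$ the Bessel limit of Theorem~\ref{Th-2}(ii) yields $\mathbb{E}(P_n^{(3)})\sim \tfrac{9n^2}{I_1(2)}\sum_{j\ge0}g(j+1)/(j!\,(j+1)!)$, and it remains to evaluate this series in closed form. A short manipulation shows
\[
\frac{g(j+1)}{j!\,(j+1)!}=\frac{4^{j+1}}{2\,(2j+2)!}-\frac{1}{2\,((j+1)!)^2},
\]
so that summing over $j\ge0$ telescopes against $\cosh 2=\sum_{i\ge0}4^i/(2i)!$ and $I_0(2)=\sum_{i\ge0}1/(i!)^2$, producing $\tfrac12(\cosh 2-I_0(2))$ and therefore the stated constant $\tfrac{9(\cosh 2-I_0(2))}{2I_1(2)}$.

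The case $d=2$ is where the real work lies, and I expect it to be the main obstacle. Here $(d-2)=0$ makes the prefactor the \emph{exact} deterministic quantity $2n(2n-1)$, but now $R_n^{(2)}$ concentrates around $k^\ast=n-\sqrt n$ with Gaussian fluctuations of order $n^{1/4}$ (Theorem~\ref{Th-2}(i)), so the weight $g(n-k)\sim\tfrac{\sqrt\pi}{2}(n-k)^{-1/2}$ is genuinely $k$-dependent and cannot be read off a single term. The plan is a Laplace-type argument: on the window where $\mathrm{OTC}_{n,k}^{(2)}$ is non-negligible one has $n-k=\sqrt n+O(n^{1/4})$, so the relative width $O(n^{-1/4})$ tends to $0$, whence $\sqrt{n-k}=n^{1/4}(1+o(1))$ uniformly over the bulk; the slowly varying factor may then be pulled out at its peak value while the Gaussian tails contribute negligibly. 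This gives $\mathbb{E}[g(n-R_n^{(2)})]\sim\tfrac{\sqrt\pi}{2}n^{-1/4}$ and hence $\mathbb{E}(P_n^{(2)})\sim 2n(2n-1)\cdot\tfrac{\sqrt\pi}{2}n^{-1/4}\sim 2\sqrt\pi\,n^{7/4}$. The delicate points to verify are the uniformity of the replacement $g(n-k)\approx g(\sqrt n)$ across the whole concentration window and the negligibility of the subleading $-\tfrac{1}{2(n-k)}$ term in $g$; both are controlled by the sharp expansion of $\mathrm{OTC}_{n,k}^{(2)}$ from \cite{FuYuZh}.
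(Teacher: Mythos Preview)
Your proposal is correct and takes a genuinely different route from the paper's proof. The paper applies the Laplace method directly to the numerator $\sum_{k}\binom{n}{k}P(\mathcal{OC}_{n,k}^{(d)})$: for $d=2$ it splits the sum into two pieces $S_1,S_2$ and locates the peak near $k=n-\sqrt{n}$; for $d\ge3$ it shows the terms are increasing in $k$ and expands near $k=n-1$. In each case it then divides by the separately computed asymptotics of $\mathrm{OTC}_n^{(d)}$ from Theorem~\ref{Th-1}.

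Your approach instead factors each summand as $\mathrm{OTC}_{n,k}^{(d)}\cdot r_k$ with the closed form $r_k=(2n+(d-2)k)(2n+(d-2)k-1)\,g(n-k)$, turning the whole quotient into $\mathbb{E}[r_{R_n^{(d)}}]$ and then invoking the already-established limit laws of Theorem~\ref{Th-2}. This is more conceptual: for $d\ge4$ the answer drops out from $g(1)=\tfrac12$ and the degeneracy of $n-1-R_n^{(d)}$; for $d=3$ your identity $g(j+1)/(j!(j+1)!)=4^{j+1}/(2(2j+2)!)-1/(2((j+1)!)^2)$ makes the appearance of $\cosh(2)-I_0(2)$ transparent, whereas in the paper these constants emerge only after carrying out the Laplace expansion in full. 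The trade-off is that for $d=2$ you need a quantitative concentration statement (tails of $\mathrm{OTC}_{n,k}^{(2)}$ beyond the $n^{1/4}$-window must be $o(n^{-1/4}\mathrm{OTC}_n^{(2)})$, since $r_k=O(n^2)$ uniformly while the target is order $n^{7/4}$); this is exactly what the local limit theorem in \cite{FuYuZh} provides, as you note, and is essentially the same work the paper does by hand. Both approaches are of comparable length for $d=2$; yours is shorter and more illuminating for $d\ge3$.
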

\begin{proof}
We start with the bicombining case ($d=2$). Using the result from the Proposition~\ref{pro:SackinOCnk} yields
\begin{align*}
\sum_{k=0}^{n-1}\binom{n}{k}P(\mathcal{OC}_{n,k}^{(2)})&=\sum_{k=0}^{n-1}\binom{n}{k+1}P(\mathcal{OC}_{n,n-1-k}^{(2)})\\
&=\sum_{k=0}^{n-1}\binom{n}{k+1}\frac{(2n)!}{2^{n-1-k}(2k+2)!}\left(2^{k+1}(k+1)!-\frac{(2k+1)!}{2^kk!}\right).
\end{align*}
We break the last sum $S$ into two sums, i.e., $S=S_1+S_2$ according to the two terms in the bracket. Thus,
\[
S_1=\frac{n!(2n)!}{2^{n-2}}\sum_{k=0}^{n-1}\frac{4^k}{(2k+2)!(n-1-k)!}
\]
and we have a similar expression for $S_2$. Note that the terms inside the sum increase until a positive integer $k^{*}$ with $k^{*}=\sqrt{n}+{\mathcal O}(1)$ and decrease afterwards. Moreover, by using Stirling's formula, we see that
\[
\frac{4^k}{(2k+2)!(n-1-k)!}=\frac{1}{8\pi\sqrt{2e}}n^{-3/4}e^{2\sqrt{n}}e^n n^{-n}e^{-x^2/\sqrt{n}}\left(1+{\mathcal O}\left(\frac{1+\vert x\vert}{\sqrt{n}}+\frac{x^3}{n}\right)\right)
\]
uniformly for $\vert x\vert\leq n^{3/10}$ where $k=\sqrt{n}+x$. Consequently, from a standard application of the Laplace method:
\begin{align*}
\sum_{k=0}^{n-1}\frac{4^k}{(2k+2)!(n-1-k)!}&\sim\frac{1}{8\pi\sqrt{2e}}n^{-3/4}e^{2\sqrt{n}}e^nn^{-n}\int_{-\infty}^{\infty}e^{-x^2/\sqrt{n}}{\rm d}x\\
&=\frac{1}{8\sqrt{2e\pi}}n^{-1/2}e^{2\sqrt{n}}e^nn^{-n}
\end{align*}
and thus,
\[
S_1\sim\frac{n!(2n)!}{2^{n-2}}\cdot\frac{1}{8\sqrt{2e\pi}}n^{-1/2}e^{2\sqrt{n}}e^nn^{-n}\sim\frac{1}{2\sqrt{e}}(2n)!2^{-n}e^{2\sqrt{n}},
\]
where we again used Stirling's formula. Similarly, we can derive the asymptotics of $S_2$ which shows that $S_2$ is of a smaller asymptotic order, i.e., $S_2=o(S_1)$. Consequently, $S\sim S_1$. Finally, dividing by the asymptotics of $\mathrm{OTC}_{n}^{(2)}$ from part (i) of Theorem~\ref{Th-1} and using (once more) Stirling's formula gives the claimed result.

Next, for $d=3$, we first note that
\[
\binom{n}{k} P(\mathcal{OC}_{n,k}^{(3)}) =\binom{n}{k}\,\frac{(2n+k)!}{6^k(2n-2k)!}\,\left(2^{n-k}(n-k)!-\frac{(2n-2k-1)!}{2^{n-k-1}(n-k-1)!}\right)
\]
is increasing in $k$ with $0\leq k\leq n-1$. By replacing $k$ by $n-1-k$ and using Stirling's formula, we obtain that
\[
\binom{n}{k+1}P(\mathcal{OC}_{n,n-1-k}^{(3)})=\left(\frac{4^{k+1}}{(2k+2)!}-\frac{1}{(k+1)!^2}\right)\,\frac{(3n)!}{6^n}\,\left(1+\mathcal{O}\left(\frac{1+k^2}{n}\right)\right),
\]
uniformly for $k$ with $k=o(\sqrt{n})$. Thus, by an another application of the Laplace method:
\begin{align*}
\sum_{k=0}^{n-1} \binom{n}{k+1}P(\mathcal{OC}_{n,n-1-k}^{(3)})\sim\left( \sum_{k \geq 1 }\frac{4^k}{(2k)!} -\frac{1}{k!^2}\right)\,\frac{(3n)!}{6^n}=\bigl(\cosh(2)-I_0(2) \bigr)\,\frac{(3n)!}{6^n},
\end{align*}
where $I_0(2)$ is the modified Bessel function (see Theorem~\ref{Th-1}, (ii)). Dividing by the asymptotics of $\mathrm{OTC}^{(3)}_n$ (again see Theorem~\ref{Th-1}, (ii)), we have
\[
{\mathbb E}(P^{(3)}_n)\sim\frac{9(\cosh(2) - I_0(2))}{2I_1(2)}\,n^2.
\]
This proves the claim in this case. (Note the similarity of this proof to the one of Theorem~\ref{Th-2}, (ii).)

For $d \geq 4$, with similar arguments as in the proof of part (iii) of Theorem~\ref{Th-2}:
\begin{align*}
\sum_{k=0}^{n-1}\binom{n}{k+1}P(\mathcal{OC}_{n,n-1-k}^{(d)})& \sim n\,P(\mathcal{OC}_{n,n-1}^{(d)})=\frac{n\,(dn-d+2)!}{2\,(d!)^{n-1}}.
\end{align*}
Dividing by the asymptotics of $\mathrm{OTC}^{(d)}_n$ from part (iii) in Theorem~\ref{Th-2}, we have
\[
{\mathbb E}(P^{(d)}_n) \sim\frac{d^2}{2}\,n^2,
\]
which proves the claimed result also in this case.
\end{proof}
\begin{rem}
A (slightly) weaker version of the above result for the bicombining case was derived in \cite[Proposition~2]{Zh}.
\end{rem}

Denote by $S^{(d)}_n$ the Sackin index of a random one-component tree-child network with $n$ leaves. Then, by combining the last proposition with Lemma~\ref{rel-Sackin-P}, we obtain the main result of this subsection. (This result for $d=2$ was also the main result of \cite{Zh}.)

\begin{thm}
\begin{itemize}
\item[(i)] For $d=2$ (bicombining case), we have
\[
{\mathbb E}(S_n^{(d)})=\Theta(n^{7/4}).
\]
\item[(ii)] For $d\geq 3$, we have
\[
{\mathbb E}(S_n^{(d)})=\Theta(n^2).
\]
\end{itemize}
\end{thm}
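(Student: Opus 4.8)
The plan is to combine the two results already established in this subsection, namely the sandwich inequality of Lemma~\ref{rel-Sackin-P} relating $S(N)$ and $P(N)$, together with the asymptotics for the expected top-tree path length $\mathbb{E}(P_n^{(d)})$ from the preceding proposition. First I would observe that the claimed theorem is an immediate corollary of these two facts once one passes from the deterministic network-level inequality to an inequality between expectations. Concretely, Lemma~\ref{rel-Sackin-P} gives, for every one-component tree-child network $N$ with at least two leaves, that $S(N) = \Theta(P(N))$ with \emph{absolute} implied constants (depending only on $d$, not on $N$). This is the crucial feature: since the constants do not depend on the particular network, the inequality survives averaging over the uniform distribution.

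The key step is therefore to take expectations through the sandwich. For $d=2$, Lemma~\ref{rel-Sackin-P} yields $S(N) \leq P(N)+1 \leq 2S(N)$, and averaging over a uniformly random network with $n$ leaves gives
\[
\mathbb{E}(S_n^{(2)}) \leq \mathbb{E}(P_n^{(2)}) + 1 \leq 2\,\mathbb{E}(S_n^{(2)}).
\]
Combined with $\mathbb{E}(P_n^{(2)}) \sim 2\sqrt{\pi}\,n^{7/4}$ from part~(i) of the proposition, this forces $\mathbb{E}(S_n^{(2)}) = \Theta(n^{7/4})$: the upper bound $\mathbb{E}(S_n^{(2)}) \leq \mathbb{E}(P_n^{(2)})+1 = O(n^{7/4})$ and the lower bound $\mathbb{E}(S_n^{(2)}) \geq \tfrac{1}{2}(\mathbb{E}(P_n^{(2)})+1) = \Omega(n^{7/4})$ together pin down the order. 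For $d\geq 3$, the argument is identical but cleaner, using the inequality $S(N) \leq P(N) \leq d\,S(N)$ from~\eqref{chain-ineq}, so that
\[
\mathbb{E}(S_n^{(d)}) \leq \mathbb{E}(P_n^{(d)}) \leq d\,\mathbb{E}(S_n^{(d)}).
\]
Parts~(ii) and~(iii) of the proposition give $\mathbb{E}(P_n^{(d)}) = \Theta(n^2)$ for all $d\geq 3$ (the precise leading constant is irrelevant for a $\Theta$-statement), whence $\mathbb{E}(S_n^{(d)}) = \Theta(n^2)$.

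Since both Lemma~\ref{rel-Sackin-P} and the proposition are already proved, the only genuine obstacle is bookkeeping rather than mathematics: one must make sure the single exceptional network with one leaf (excluded in Lemma~\ref{rel-Sackin-P}) and the additive constant $+1$ in the bicombining bound do not affect the asymptotic order, which they clearly do not since they are $O(1)$ against a polynomially growing mean. I would state explicitly that linearity and monotonicity of expectation preserve the deterministic inequalities, note that the finitely many small-$n$ cases are absorbed into the $\Theta$-constants, and then simply read off the two cases from the proposition. No further computation is needed.
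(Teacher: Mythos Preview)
Your proposal is correct and matches the paper's approach exactly: the paper does not give a separate proof but simply states that the theorem follows ``by combining the last proposition with Lemma~\ref{rel-Sackin-P}''. Your write-up spells out the averaging step and the handling of the $+1$ and the $n=1$ exception more carefully than the paper bothers to, but the argument is the same.
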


\section{General Networks}\label{gen-net}

In this section, we consider general $d$-combining tree-child networks.

\subsection{Encoding Networks by Words}\label{enc-words}

We start with a formula for $\mathrm{TC}_{n,k}^{(d)}$ which is not as explicit as the formula for $\mathrm{OTC}_{n,k}^{(d)}$ from Theorem~\ref{formula-OTC}. (It, however, leads to an efficient recursive way of computing $\mathrm{TC}_{n,k}^{(d)}$.) This formula uses the number of words that are defined next.

\begin{df}\label{def-words}
Let  $\mathcal{C}_{n,k}^{(d)}$ denote the class of words consisting of the letters $\{\omega_1,\ldots,\omega_n\}$ in which $k$ letters occur $d+1$ times and $n-k$ letters occur $2$ times and which satisfy the following condition:
In every prefix of a word, either a letter has not occurred more than $d-2$ times, or, if it has, then the number of occurrences of $\omega_i$ is at least as large as the number of occurrences of $\omega_j$ for all $j>i$. Here, for the letters appearing only $2$ times, we treat the $0$th, $1$st, and $2$nd occurrence as the $(d-1)$st, $d$th, and $(d+1)$st occurrence, respectively.
\end{df}

\begin{rem}
For $k=n$, we recover the words from \cite[Definition~12]{ChFuLiWaYu} which in turn generalized the words from \cite[Definition~2]{FuYuZh} from the bicombining case.
\end{rem}

\begin{rem}
The words $\mathcal{C}_{n,k}^{(d)}$ can also be encoded by Young tableaux with walls, where a wall between two cells indicates that there are no order constraints between the respective entries; see \cite{BaMaWa,BaWa}.
The $i$th column is associated to the $i$th letter $\omega_i$ and its size is equal to the number of occurrences of $\omega_i$.
Therefore, the corresponding Young tableaux consist of $k$ columns with $d+1$ cells and $n-k$ columns with $2$ cells
placed next to each other in such a way that the top cells are all side-by-side.
We put vertical walls between all cells in rows three to $d+1$.
Finally, the cells are filled in increasing order from left to right and bottom to top with the numbers $\{1,2,\dots,
2n+k(d-1)
\}$.
The bijection is as follows:
Read the words from left to right.
A letter $\omega_i$ at position $j$ indicates that the value $j$ is put into column $i$.

This generalizes the class analyzed in \cite[Section~4]{BaWa} consisting of only one row with walls.
Our asymptotic counting result for $\mathcal{C}_{n,n}^{(d)}$ (Theorem~\ref{gen-tc-max-k}) gives then directly the respective result for rectangular Young tableaux with vertical walls in all but the first two rows of shape $(d+1) \times n$.
\end{rem}

The next result connects tree-child networks and the  words from Definition~\ref{def-words}.

\begin{thm}\label{formula-TC}
Let $c_{n,k}^{(d)}:=\vert\mathcal{C}_{n,k}^{(d)}\vert$. Then,
\[
\mathrm{TC}_{n,k}^{(d)}=\frac{n!}{2^{n-k-1}}c_{n-1,k}^{(d)}.
\]
\end{thm}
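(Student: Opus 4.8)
The plan is to prove Theorem~\ref{formula-TC} by constructing an explicit encoding of $d$-combining tree-child networks through the words of Definition~\ref{def-words}, generalizing to arbitrary $d$ the bijective strategy used for the bicombining case in \cite{FuYuZh} (and making rigorous the heuristic encoding of \cite{PoBa}). The idea is to read off a canonical \emph{construction history} of a network $N\in\mathrm{TC}_{n,k}^{(d)}$ as a word over the alphabet $\{\omega_1,\dots,\omega_{n-1}\}$, where the index~$i$ records the lineage to which each elementary step is applied. Concretely, I would grow (or, dually, reduce) $N$ one event at a time in a fixed canonical order, where an event is either the creation of a tree node that splits off a new lineage or the attachment of one of the $d$ parent edges of a reticulation node. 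A lineage that never hosts a reticulation node then contributes exactly two recorded events (its creation and its leaf), producing the $n-1-k$ letters that occur twice, while a lineage carrying a reticulation node contributes $d+1$ events (its creation together with the $d$ parent attachments), producing the $k$ letters that occur $d+1$ times; with the shift convention of Definition~\ref{def-words} for the doubly-occurring letters, this matches the prescribed multiset structure, and the total length $2(n-1)+k(d-1)$ is consistent with the edge count computed from~\eqref{tree-nodes}.

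The second step is to match the combinatorial constraints. The prefix condition in Definition~\ref{def-words} should be shown to be exactly the shadow of the tree-child property: the first two occurrences of a letter (the ``tree part'' of a lineage, corresponding to rows one and two of the associated Young tableau with walls) are only subject to the ordering forced by building a phylogenetic tree, whereas every further occurrence is a reticulation parent attachment and must respect the dominance relation $\#\omega_i\ge\#\omega_j$ for $j>i$. I would verify that this dominance is precisely what rules out the three forbidden patterns --- root followed by a reticulation, a reticulation child of a reticulation, and a tree node whose two children are both reticulations --- so that a word lies in $\mathcal{C}_{n-1,k}^{(d)}$ if and only if the corresponding sequence of events can be realized as a genuine tree-child network. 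Once this correspondence is established, the counting factors are accounted for as follows: the $n!$ arises from restoring the bijective leaf labeling from $\{1,\dots,n\}$ (the word only records lineages in canonical order, not their labels, with one distinguished leaf fixed), and the factor $2^{-(n-k-1)}$ corrects for the fact that the word imposes an order on the two children at the branch points of the underlying tree part, whereas in the network these children are unordered --- the same over-counting that produces the power of two in the phylogenetic-tree count~\eqref{eq:phylotreeformula}.

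The main obstacle I expect is proving bijectivity, and specifically the reverse direction: that reading an arbitrary word of $\mathcal{C}_{n-1,k}^{(d)}$ left to right yields a \emph{deterministic} and \emph{always legal} reconstruction of a canonical network, i.e.\ that the prefix condition guarantees each parent edge is attached only to a reticulation node that already exists and that no tree-child violation is ever forced, while no two distinct words produce the same canonical network. Establishing that the prefix condition is simultaneously necessary (each network maps to a valid word) and sufficient (each valid word is realizable, uniquely) is the crux; the labeling and child-ordering multiplicities are then a routine bookkeeping argument. I would handle the sufficiency by an induction on the word length, showing that the dominance constraint leaves a unique legal insertion at each step and that suppressing the last event returns a shorter valid word, which simultaneously yields injectivity and the recursive structure underlying the computation of $c_{n-1,k}^{(d)}$.
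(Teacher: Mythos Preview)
Your plan is a plausible heuristic, but it diverges from the paper's proof in its core mechanism, and it contains a concrete misreading of Definition~\ref{def-words} that would derail the argument.

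The paper does not build a ``construction history'' of $N$. Instead it uses Lemma~\ref{free-nodes} to note that $N$ has exactly $n-k-1$ free tree nodes, and for each one it \emph{chooses} one of the two outgoing free edges --- this choice, not an ordering of children, is the source of the factor $2^{n-k-1}$. On the chosen edge it inserts $d-1$ new nodes, on the other edge one node, and wires them into an artificial reticulation. This converts $(N,\text{choices})$ into a \emph{maximally reticulated} network, for which the path-component decomposition of~\cite{FuYuZh} applies verbatim: every non-root path-component begins at a (real or artificial) reticulation node, and after a canonical indexing of path-components one simply reads the labels of each reticulation node and its $d$ parents along the components in index order. The letters occurring twice are precisely the artificial reticulations, because the $d-1$ inserted parents on the chosen edge are collapsed to a single symbol. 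The permutation of length~$n$ records the leaf labels at the ends of the $n$ path-components --- so $n!$ is literally the number of permutations, not a relabelling ``with one distinguished leaf fixed''. Both directions of the bijection are explicit once the indexing rule is stated; no inductive realisability argument is needed.

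Your reading of the prefix condition is inverted. In the Young-tableau picture the walls lie in rows $3$ through $d+1$ (counted from the top, with filling increasing bottom-to-top), so it is the \emph{last} two occurrences of each letter --- the $d$th parent and the reticulation node itself in the paper's reading --- that sit in the wall-free top rows and carry the dominance constraint $\#\omega_i\ge\#\omega_j$ for $j>i$; the first $d-1$ occurrences are unconstrained horizontally. Your proposal places the constraint on the first two occurrences and treats the later ones as the restricted reticulation-parent attachments, which is exactly backwards and does not match the suffix structure $\omega_n\omega_m\omega_{m+1}\cdots\omega_{n-1}\omega_n$ exploited in Proposition~\ref{rec-cnk}. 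Consequently the verification you outline --- that the dominance relation rules out the three forbidden tree-child patterns --- is not what is actually going on: the tree-child property is used only to guarantee that the path-component decomposition exists, while the prefix condition encodes the rigid \emph{indexing rule} for path-components, not a local forbidden-pattern analysis.
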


\begin{rem}
For $d=2$, \cite{PoBa} proposed an encoding of tree-child networks with $n$ leaves and $k$ reticulation nodes by a (slightly) different class of words.
This encoding led to a similar formula for their numbers.
However, whereas the formula from \cite{PoBa} is just a conjecture, we provide a rigorous proof of our result.
\end{rem}

Before proving the above theorem, we recall some concepts and provide generalizations of results from \cite{FuYuZh}.
First, we call a tree node in a tree-child network {\it free} if both its children are not reticulation nodes. The edges to the two children of a free tree node are called {\it free edges}.

\begin{lmm}\label{free-nodes}
A tree-child network with $n$ leaves and $k$ reticulation nodes has exactly $n-k-1$ free tree nodes and thus $2(n-k-1)$ free edges.
\end{lmm}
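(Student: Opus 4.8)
The plan is to prove this by a direct counting argument, combining the formula for the number of tree nodes from equation~(\ref{tree-nodes}) with the structural constraints imposed by the tree-child property. Write $t$ for the number of tree nodes; by~(\ref{tree-nodes}) we have $t=n+(d-1)k-1$. I would then split the tree nodes into free and non-free ones, and the whole proof reduces to counting the non-free tree nodes exactly.

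First I would count the edges of the network whose head (child endpoint) is a reticulation node. Since every reticulation node has in-degree $d$ and there are $k$ of them, there are exactly $dk$ such edges. The next step is to identify the possible tails (parent endpoints) of these edges. Here the tree-child property enters: condition (a) rules out the root as a parent of a reticulation node, and condition (b) rules out a reticulation node as a parent of a reticulation node. Since leaves have out-degree $0$, the only remaining possibility is that every parent of a reticulation node is a tree node. Because the network is a simple DAG, the $d$ in-edges of a single reticulation node emanate from $d$ distinct tree nodes.

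The key remaining observation is that each non-free tree node is the tail of exactly one such edge. Indeed, a tree node has out-degree $2$, so it could in principle have two reticulation children, but condition (c) of the tree-child property forbids this: at least one child must not be a reticulation node. Hence a non-free tree node has exactly one reticulation child, and it is counted exactly once among the $dk$ edges. This establishes a bijection between the edges entering reticulation nodes and the non-free tree nodes, so the number of non-free tree nodes equals $dk$. Subtracting from the total gives the number of free tree nodes as
\[
t-dk=\bigl(n+(d-1)k-1\bigr)-dk=n-k-1,
\]
and since each free tree node has out-degree $2$ with both outgoing edges free, there are $2(n-k-1)$ free edges.

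I do not expect any genuine obstacle here; the argument is a short double count. The only point requiring care is the bijection in the third step: one must verify both that every edge into a reticulation node comes from a tree node (using tree-child conditions (a) and (b), and that the $d$ parents are distinct in a simple graph) and that no tree node is over-counted (using condition (c) to forbid two reticulation children). Once these are checked, the conclusion follows immediately from~(\ref{tree-nodes}).
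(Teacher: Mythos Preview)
Your proof is correct and follows essentially the same approach as the paper: use~(\ref{tree-nodes}) for the total number of tree nodes, then show via the tree-child conditions (a)--(c) and simplicity of the graph that the non-free tree nodes are in bijection with the $dk$ edges entering reticulation nodes, and subtract. The paper phrases the same count slightly more tersely (listing that the parents of reticulation nodes are pairwise distinct tree nodes and exhaust the non-free ones), but the content is identical.
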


Note that exactly the same result holds in the bicombining case; see~\cite[Lemma~1]{FuYuZh}.
For an example demonstrating the last lemma see Figure~\ref{3-comb-ex}.

\begin{figure}[t]
\centering
\includegraphics[scale=1]{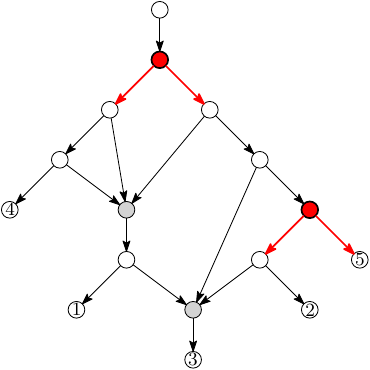}
\caption{A $3$-combining tree-child network with $5$ leaves and $2$ reticulation nodes (gray nodes). Thus, the number of free tree nodes is $2$ (red nodes) and there are $4$ free edges (red edges).}\label{3-comb-ex}
\end{figure}

\begin{proof}
By (\ref{tree-nodes}), the number of tree nodes in a tree-child network with $n$ leaves and $k$ reticulation nodes equals $n+(d-1)k-1$.
Now, observe that the $d$ parents of a reticulation node are
(i) different for all reticulation nodes (because of the tree-child property);
(ii) tree nodes which are not free%
;
and (iii) \emph{all} non-free tree nodes. Thus, the number of free tree nodes equals
\[
n+(d-1)k-1-dk=n-k-1. \qedhere
\]
\end{proof}

We next recall the structure of {\it maximally reticulated tree-child networks}, i.e., tree-child networks with $n$ leaves and $n-1$ reticulation nodes, which was established in \cite[Lemma~4]{FuYuZh} for the bicombining case.
It also carries over to the $d$-combining case:
Every maximally reticulated tree-child network admits a (unique) decomposition into {\it path-components}, which are maximal paths that start at a node and end at a leaf, where all its intermediate nodes are tree nodes. (See Figure~4 in \cite{FuYuZh} for an example.)

Now, we are ready to prove Theorem~\ref{formula-TC}.

\begin{proof}[Proof of Theorem~\ref{formula-TC}]
Let $N$ be a tree-child network with $n$ leaves and $k$ reticulation nodes. (We take the network from Figure~\ref{3-comb-ex} as our running example.) From Lemma~\ref{free-nodes}, we know that $N$ has $n-k-1$ free tree nodes, which each have $2$ free edges. We choose one free edge from each of these pairs of free edges.
Then, the number of all networks $N$ together with choices of free edges equals
\begin{equation}\label{ref-formula}
2^{n-k-1}\mathrm{TC}_{n,k}^{(d)}=n!c_{n-1,k}^{(d)},
\end{equation}
where the equality with the right-hand side is our claim. (See Figure~\ref{bij-TCnk}-(a) for the network from Figure~\ref{3-comb-ex} and its $4$ choices of free edges.)
Thus, in order to prove the claim it suffices to find a bijection between the networks $N$ and a choice of free edges to tuples consisting of a permutation and a word from $\mathcal{C}_{n-1,k}^{(d)}$.

In order to explain this bijection, fix a network $N$ and a choice of free edges; see
Figure~\ref{bij-TCnk}-(a). For every free tree node, insert $d-1$ nodes on its chosen free edge and a single node on its other free edge. Connect the $d-1$ nodes to the single node, thereby turning this single node into a new reticulation node. Notice that the resulting network is a maximally reticulated tree-child network; see Figure~\ref{bij-TCnk}-(b).
The rest of the proof proceeds now as the proof of \cite[Proposition~2]{FuYuZh}.

First, we index the path-components as follows: the path-component containing the root gets index~$0$. Consider all other path-components (which must start with a reticulation node) with parents of the reticulation node already in indexed path components. Index them according to the largest index of the path-component which contains the parents, or, in case of equal largest indices according to whose last parent is encountered first when reading the nodes in the path component from the starting node to the leaf. Repeat this until all path-components have been indexed; see Figure~\ref{bij-TCnk}-(c). Note that one path-component starts with the root, and $n-1$ path-components start with a reticulation node.

Now, label the reticulation node and all its parents of the path-component with index $1$ by $a$, of the path component with index $2$ by $b$, etc.
Next, for each chosen free edge, treat the added $d-1$ nodes (which all have the same label) as a single node; see Figure~\ref{bij-TCnk}-(c).
Then, a word from $\mathcal{C}_{n-1,k}^{(d)}$ is obtained by reading the labels of nodes of the path-components in increasing order.
Finally, a permutation is obtained by reading the labels of each leaf of the path-components in the above order; see Figure~\ref{bij-TCnk}-(d).

Overall, this gives a bijection between $N$ and a fixed choice of free edges for every free tree node to a word from $\mathcal{C}_{n-1,k}^{(d)}$ and a permutation of length $n$. Thus, we have proved (\ref{ref-formula}) and the claim.
\end{proof}

Theorem~\ref{formula-TC} reduces the problem of counting $\mathrm{TC}_{n,k}^{(d)}$ to that of $c_{n,k}^{(d)}$. For the latter sequence, we have the following relation to the sequence $b_{n,k,m}^{(d)}$ which can be computed recursively.

\begin{figure}
\centering
\includegraphics[scale=0.87]{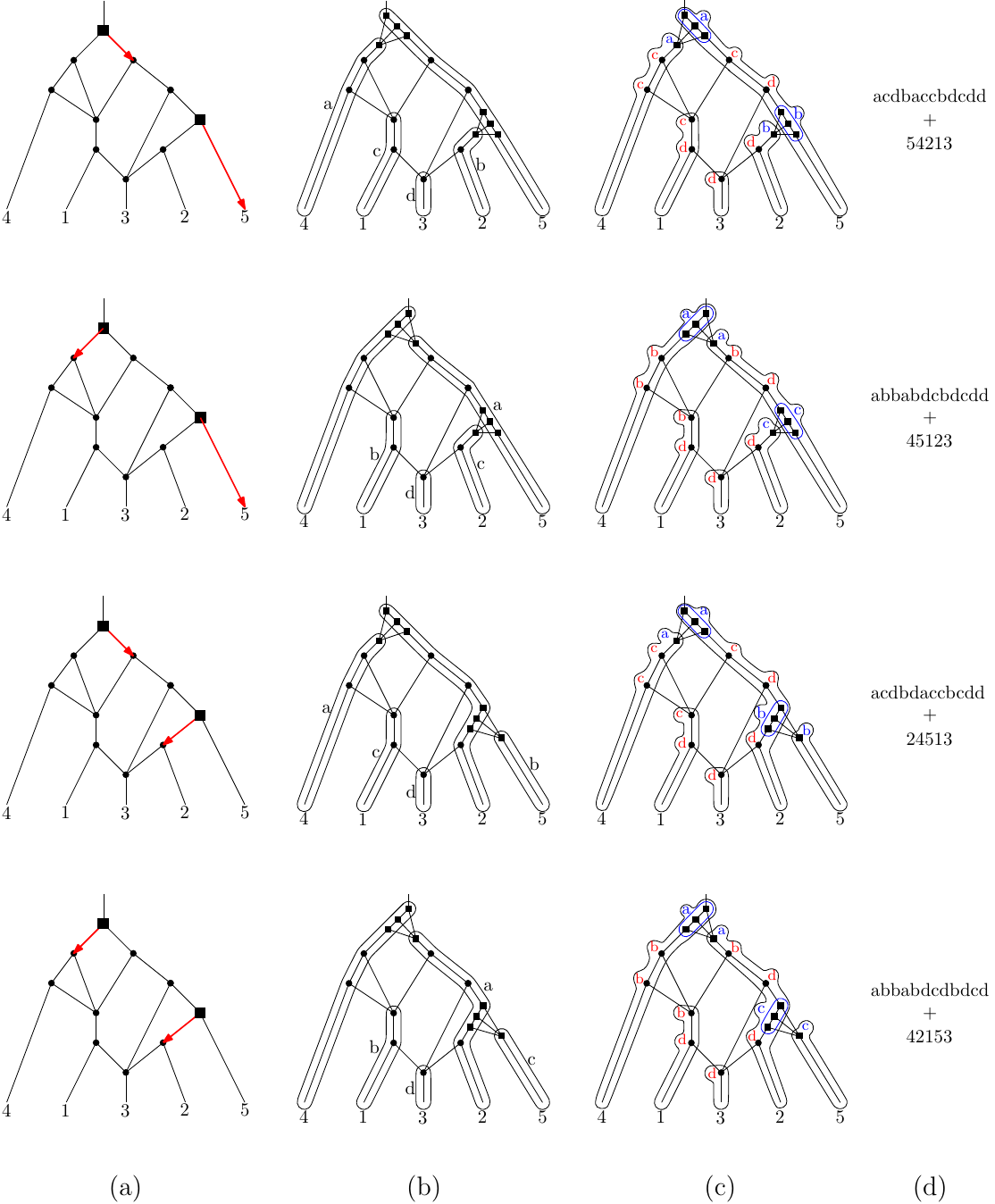}
\caption{(a) The network from Figure~\ref{3-comb-ex} together with the $4$ possible ways of choosing an outgoing free edge for every free tree node; (b) Replacing each free tree node by a reticulation node which results in a maximally reticulated tree-child network whose path-components are indexed; (c) Labeling all internal nodes by labeling reticulation nodes and their parents with the label of their path-component. Note that all nodes on the chosen free edges only receive one label; (d) The word from $\mathcal{C}_{4,2}^{(3)}$ and the permutation corresponding to each network.}\label{bij-TCnk}
\end{figure}

\begin{pro}\label{rec-cnk}
Let $c_{n,k}^{(d)}:=\vert\mathcal{C}_{n,k}^{(d)}\vert$. Then,
\[
c_{n,k}^{(d)}=\sum_{m\geq 1}b_{n,k,m}^{(d)},
\]
where $b_{n,k,m}^{(d)}\ (1\leq m\leq n, 0\leq k\leq n)$ can be recursively computed as
\begin{equation}\label{rec-bnkm}
b_{n,k,m}^{(d)}=\sum_{j=1}^m b_{n-1,k,j}^{(d)}+\binom{n+m+k(d-1)-2}{d-1}\sum_{j=1}^m b_{n-1,k-1,j}^{(d)},\qquad (n\geq 2)
\end{equation}
with initial conditions $b_{n,k,m}^{(d)}=0$ for $n<m$ of $n<k$, $b_{n,-1,m}=0$ and $b_{1,0,1}^{(d)}=b_{1,1,1}^{(d)}=1$.
\end{pro}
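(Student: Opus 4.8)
\textbf{Proof plan for Proposition~\ref{rec-cnk}.}
The plan is to introduce $b_{n,k,m}^{(d)}$ as a refinement of $c_{n,k}^{(d)}$ that tracks enough boundary information to make a recursion on word length possible, and then to establish \eqref{rec-bnkm} by a case analysis on the last letter of a word. Concretely, I would define $b_{n,k,m}^{(d)}$ to be the number of words in $\mathcal{C}_{n,k}^{(d)}$ in which the \emph{last} letter written (i.e.\ the letter whose final occurrence closes the word) is $\omega_m$, or more precisely those words whose terminal letter is the $m$th one in the ordering $\omega_1,\dots,\omega_n$. Summing over all possible terminal letters then gives $c_{n,k}^{(d)}=\sum_{m\geq1}b_{n,k,m}^{(d)}$, which is the first displayed identity. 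The parameter $m$ is exactly the statistic one needs because the prefix condition in Definition~\ref{def-words} constrains how the counts of $\omega_i$ compare to those of $\omega_j$ for $j>i$, so the index of the last-completed letter controls which continuations are legal.

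The heart of the argument is the recursion \eqref{rec-bnkm}, which I would prove by removing the letter $\omega_n$ (the alphabetically largest, hence the one subject to the tightest prefix constraints) from a word in $\mathcal{C}_{n,k}^{(d)}$ with terminal index $m$. There are two cases depending on whether $\omega_n$ is a \emph{doubly-occurring} letter (contributing to the $n-k$ count) or a \emph{$(d+1)$-fold} letter (one of the $k$ reticulation letters). In the first case, deleting the two occurrences of $\omega_n$ yields a word in $\mathcal{C}_{n-1,k}^{(d)}$, and the condition that the remaining word can be legally extended forces its terminal index $j$ to satisfy $1\le j\le m$; this accounts for the first sum $\sum_{j=1}^m b_{n-1,k,j}^{(d)}$. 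In the second case, deleting all $d+1$ occurrences of $\omega_n$ yields a word in $\mathcal{C}_{n-1,k-1}^{(d)}$, again with terminal index $j\le m$, but now one must additionally count the number of ways to \emph{reinsert} the $d+1$ occurrences of $\omega_n$ consistently with the prefix rule. The total length of a word in $\mathcal{C}_{n-1,k-1}^{(d)}$ is $2(n-1)+(d-1)(k-1)=2n+(d-1)k-d-1$, and the $d-1$ "free" occurrences of $\omega_n$ (beyond the first two and the last, which are pinned by the rule) may be slotted into the available gaps; a careful count of these gaps should produce exactly the binomial factor $\binom{n+m+k(d-1)-2}{d-1}$.

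The main obstacle I anticipate is verifying the binomial coefficient in the second term, i.e.\ showing that the number of legal positions for the free copies of $\omega_n$ is precisely $\binom{n+m+k(d-1)-2}{d-1}$. This requires translating the prefix condition of Definition~\ref{def-words} into an explicit combinatorial constraint on where occurrences of the largest letter may be placed relative to the terminal letter $\omega_m$ of the reduced word. The Young-tableaux-with-walls reformulation mentioned in the earlier remark is likely the cleanest bookkeeping device here: adding $\omega_n$ corresponds to appending a new rightmost column of height $d+1$, where the bottom two cells and the top cell are order-constrained but the middle $d-1$ cells sit behind walls and may receive any of the available values, and counting insertions reduces to a stars-and-bars computation whose upper parameter is the number of entries already present plus the slack permitted by the index $m$. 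Once the correspondence between $m$, the current alphabet size, and the count $n+m+k(d-1)-2$ of relevant slots is pinned down, the binomial coefficient follows, and the initial conditions ($b_{1,0,1}^{(d)}=b_{1,1,1}^{(d)}=1$ for the single-letter words, and the vanishing cases when $n<m$, $n<k$, or $k=-1$) are immediate from the definition.
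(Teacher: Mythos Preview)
Your high-level strategy---remove all occurrences of $\omega_n$, split into two cases according to whether $\omega_n$ appears twice or $d{+}1$ times, and count reinsertions via stars-and-bars---is exactly what the paper does. However, your definition of $b_{n,k,m}^{(d)}$ is wrong, and this error propagates into the rest of the sketch. The terminal letter of \emph{every} word in $\mathcal{C}_{n,k}^{(d)}$ is $\omega_n$: the prefix condition forces the $(d{+}1)$st (i.e.\ final) occurrences of the letters to appear in increasing index order, so the word must end with $\omega_n$. Hence ``the number of words whose terminal letter is $\omega_m$'' is zero for $m<n$ and $c_{n,k}^{(d)}$ for $m=n$, which is not the refinement you want. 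The correct meaning of $m$ is the following: the same prefix condition forces the $d$th (penultimate) occurrences to appear in increasing index order as well, so every word has a suffix of the shape $\omega_n\,\omega_m\,\omega_{m+1}\cdots\omega_{n-1}\,\omega_n$ for a unique $m$ with $1\le m\le n$ (namely, $m$ is the smallest index whose penultimate occurrence comes after the penultimate occurrence of $\omega_n$). The paper defines $b_{n,k,m}^{(d)}$ to count words with this suffix, and then $c_{n,k}^{(d)}=\sum_{m\ge 1} b_{n,k,m}^{(d)}$ is immediate.

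With this corrected definition, your case analysis goes through, but one more detail needs fixing. You write that the pinned occurrences of $\omega_n$ are ``the first two and the last''; in fact it is the \emph{last two} occurrences that are pinned (the penultimate $\omega_n$ must sit immediately before the final $\omega_m$, and the final $\omega_n$ at the very end), while the first $d{-}1$ occurrences are unconstrained by the prefix rule and may be inserted anywhere up to and including the position of the penultimate $\omega_n$. Counting the available slots for these $d{-}1$ free copies---namely, the prefix of the reduced word in $\mathcal{C}_{n-1,k-1}^{(d)}$ before its suffix $\omega_m\cdots\omega_{n-1}$, together with the newly inserted penultimate $\omega_n$---is the stars-and-bars computation that produces $\binom{n+m+k(d-1)-2}{d-1}$. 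Once you repair the definition of $m$ and the identification of which two copies of $\omega_n$ are forced, the argument is the paper's.
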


\begin{proof}
First, note that because of the condition the words from $\mathcal{C}_{n,k}^{(d)}$ have to satisfy (see Definition~\ref{def-words}), any word from $\mathcal{C}_{n,k}^{(d)}$ has a suffix $\omega_n\omega_m\omega_{m+1}\cdots\omega_{n-1}\omega_n$ with $1\leq m\leq n$.
Denote by $b_{n,k,m}^{(d)}$ the number of these words. We now consider two cases.

First, we assume that $\omega_n$ is a letter which occurs twice. Then, removing the $2$ occurrences of $\omega_n$ from these words gives a word of $\mathcal{C}_{n-1,k}^{(d)}$ with suffix $\omega_m\omega_{m+1}\cdots\omega_{n-1}$, i.e., it has a suffix $\omega_{n-1} \omega_j\omega_{j+1}\dots \omega_{n-1}$ for $j=1,\dots,m$.
Reversing this procedure gives the contribution
\begin{align}
\sum_{j=1}^{m}b_{n-1,k,j}^{(d)}
\end{align}
to $b_{n,k,m}^{(d)}$, which is the first term on the right-hand side of \eqref{rec-bnkm}.

Second, if $\omega_n$ is a letter which occurs $d+1$ times, we remove the $d+1$ occurrences of this letter. Then, with the same line of reasoning as above, we obtain the second term of the right-hand side of (\ref{rec-bnkm}):
\[
\binom{n+m+k(d-1)-2}{d-1}\sum_{j=1}^{m}b_{n-1,k-1,j}^{(d)},
\]
where the binomial coefficient counts the number of ways of adding back the $d-1$ occurrences of $\omega_n$ after two $\omega_n$'s have been added, one before the last $\omega_m$ and one at the end of the word. By Definition~\ref{def-words} these first $d-1$ occurrences of $\omega_n$ may be anywhere.
\end{proof}

The recurrence from the above proposition combined with Theorem~\ref{formula-TC} allows one to compute values of $\mathrm{TC}_{n,k}^{(d)}$ for small values of $n,k,d$; see Appendix~\ref{App-A}. Also, like this, we can recover the table for $\mathrm{TC}_{n,k}^{(2)}$ from \cite{CaZh} which was computed in that paper with a much more computation-intensive approach (which is explained in Appendix~\ref{App-B}).

\subsection{Asymptotic Counting}\label{asymp-count}

In this section, we prove Theorem~\ref{gen-tc-max-k}, namely, we derive an asymptotic result for $\mathrm{TC}_n^{(d)}$. Recall that
\[
\mathrm{TC}_{n}^{(d)}=\sum_{k=0}^{n-1}\mathrm{TC}_{n,k}^{(d)}.
\]

The first main observation is that the last term in this sum dominates; see the first equality in (\ref{asymp-TCn}). More precisely, we have the following.

\begin{lmm}\label{main-obs-TC}
For $0\leq k\leq n-2$, we have
\begin{equation}\label{tc-number-ub}
\mathrm{TC}_{n,k}^{(d)}\leq\frac{1}{2(n-k-1)}\mathrm{TC}_{n,k+1}^{(d)}
\end{equation}
and consequently,
\[
\mathrm{TC}^{(d)}_n=\Theta\left(\mathrm{TC}^{(d)}_{n,n-1}\right).
\]
\end{lmm}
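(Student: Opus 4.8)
The plan is to first dispose of the ``consequently'' part, which follows quickly from the ratio bound~\eqref{tc-number-ub}, and then to concentrate all the real work on establishing~\eqref{tc-number-ub} itself via the word encoding of Theorem~\ref{formula-TC}. For the $\Theta$-statement I would iterate~\eqref{tc-number-ub} downward from $k=n-1$: writing $k=n-1-j$, each step contributes the factor $\tfrac{1}{2(n-k-1)}=\tfrac{1}{2j}$, so a straightforward induction on $j$ gives $\mathrm{TC}_{n,n-1-j}^{(d)}\le \tfrac{1}{2^{j}\,j!}\,\mathrm{TC}_{n,n-1}^{(d)}$. Summing over $0\le j\le n-1$ and bounding the partial sum by the full exponential series yields
\[
\mathrm{TC}_n^{(d)}\le\Bigl(\sum_{j\ge 0}\frac{1}{2^{j}\,j!}\Bigr)\mathrm{TC}_{n,n-1}^{(d)}=\sqrt{e}\,\mathrm{TC}_{n,n-1}^{(d)},
\]
while the trivial estimate $\mathrm{TC}_n^{(d)}\ge \mathrm{TC}_{n,n-1}^{(d)}$ supplies the matching lower bound, so $\mathrm{TC}_n^{(d)}=\Theta(\mathrm{TC}_{n,n-1}^{(d)})$. (This already foreshadows the constant $\sqrt{e}$ reappearing in Corollary~\ref{cor-2}.)

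It then remains to prove~\eqref{tc-number-ub}. Substituting $\mathrm{TC}_{n,k}^{(d)}=\tfrac{n!}{2^{n-k-1}}c_{n-1,k}^{(d)}$ from Theorem~\ref{formula-TC}, the factorials cancel and the powers of $2$ leave an overall factor $\tfrac12$, so that~\eqref{tc-number-ub} is equivalent to the purely combinatorial inequality
\[
(n-k-1)\,c_{n-1,k}^{(d)}\le c_{n-1,k+1}^{(d)}.
\]
Writing $N=n-1$, the left-hand factor $N-k$ is precisely the number of letters occurring twice (the ``short'' letters) in a word of $\mathcal{C}_{N,k}^{(d)}$, which strongly suggests proving the inequality by exhibiting an injection
\[
\{\text{short letters}\}\times \mathcal{C}_{N,k}^{(d)}\;\longrightarrow\;\mathcal{C}_{N,k+1}^{(d)},\qquad (\omega_i,w)\longmapsto \omega_i^{\,d-1}w,
\]
that promotes a chosen short letter $\omega_i$ to a letter occurring $d+1$ times by prepending $d-1$ fresh copies of it at the very front of the word. (An alternative route, via induction on the recurrence of Proposition~\ref{rec-cnk}, should also work, but the injection is cleaner.)

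The two things to verify are well-definedness (the image lies in $\mathcal{C}_{N,k+1}^{(d)}$) and injectivity, and this careful bookkeeping against Definition~\ref{def-words} is what I expect to be the main obstacle. For well-definedness the key point is the convention that a short letter is counted as already carrying $d-1$ virtual occurrences: prepending its $d-1$ real copies at the start exactly realizes these virtual occurrences, so for every prefix of the form $\omega_i^{d-1}P$ (with $P$ a prefix of $w$) the occurrence count of each letter entering the condition coincides with the count it had in the corresponding prefix $P$ of $w$. Hence the defining condition is simply inherited from the validity of $w$, and only the genuinely new prefixes $\omega_i^{t}$ with $t\le d-1$ need to be checked directly, which is immediate since $t\le d-1$ never exceeds the virtual count $d-1$ of any competing short letter. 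Injectivity is then transparent from the prepended form: for $d\ge 2$ distinct choices of $\omega_i$ produce images with distinct first letters, whereas for a fixed $\omega_i$ the factorization $\omega_i^{d-1}w$ recovers $w$ uniquely. This establishes the displayed word inequality and hence~\eqref{tc-number-ub}, completing the argument.
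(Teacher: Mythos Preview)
Your argument is correct but follows a different route from the paper. The paper proves~\eqref{tc-number-ub} directly at the network level: given a tree-child network $N$ with $k$ reticulation nodes and any of its $2(n-k-1)$ free edges (Lemma~\ref{free-nodes}), one inserts $d-1$ tree nodes into the root edge and one node into the chosen free edge, and connects them so that the latter becomes a new reticulation node; distinct choices yield distinct networks with $k+1$ reticulation nodes, giving $2(n-k-1)\,\mathrm{TC}_{n,k}^{(d)}\le\mathrm{TC}_{n,k+1}^{(d)}$. You instead pass through the word encoding of Theorem~\ref{formula-TC}, reduce to $(N-k)\,c_{N,k}^{(d)}\le c_{N,k+1}^{(d)}$, and exhibit the injection $(\omega_i,w)\mapsto\omega_i^{\,d-1}w$ promoting a short letter to a long one; the verification against Definition~\ref{def-words} that you sketch indeed goes through, since prepending the $d-1$ copies exactly realizes the virtual occurrences a short letter already carries, so all effective occurrence counts in prefixes of the form $\omega_i^{d-1}P$ coincide with those in $P$. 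The two constructions are in fact mirror images under the bijection of Theorem~\ref{formula-TC}: ``prepend at the front of the word'' corresponds to ``insert on the root edge'', and the short letters correspond to the free tree nodes. The paper's version has the advantage of being self-contained (it does not invoke the encoding), whereas yours stays entirely within the word model in which the subsequent asymptotic analysis of Section~\ref{asymp-count} is carried out. Your treatment of the ``consequently'' part, including the explicit constant $\sqrt{e}$, matches the paper's argument verbatim.
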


\begin{proof}
Let $N$ be a tree-child network with $n$ leaves and $k$ reticulation nodes. Recall that $N$ has $2(n-k-1)$ free edges; see Lemma~\ref{free-nodes}.

We can construct tree-child networks with $n$ leaves and $k+1$ reticulation nodes from $N$ by (i) inserting $d-1$ tree nodes into the root edge of $N$ and a reticulation node into a free edge and (ii) connecting the $d-1$ new tree nodes to the new reticulation node. Note that each network built in this way is different. Thus,
\[
2(n-k-1)\mathrm{TC}^{(d)}_{n,k}\leq\mathrm{TC}^{(d)}_{n,k+1},
\]
which implies the first claim.

Next, by iteration of (\ref{tc-number-ub}), we obtain
\begin{equation}\label{TCnk-upper-bound}
\mathrm{TC}^{(d)}_{n,k}\leq\frac{1}{2^{n-k-1}(n-k-1)!}\mathrm{TC}^{(d)}_{n,n-1}
\end{equation}
and thus,
\[
\mathrm{TC}^{(d)}_{n,n-1}\leq\mathrm{TC}^{(d)}_{n}\leq\bigg(\sum_{j\geq 0}\frac{1}{2^j j!}\bigg)\cdot\mathrm{TC}^{(d)}_{n,n-1}=\sqrt{e}\cdot\mathrm{TC}^{(d)}_{n,n-1},
\]
which proves the second claim.
\end{proof}

 \begin{rem}\label{eq-max-ret}
Note that for $d=2$ and $k=n-2$, equality holds in (\ref{tc-number-ub}) because in this case, (a) the networks constructed from $N$ in the above proof are maximally reticulated and (b) the child of the root of each maximally reticulated network is not free. Thus, the construction from the proof is reversible and we have a bijection. (This was first proved in \cite[Proposition 17]{CaZh}.)
\end{rem}

As a consequence of the last result, we can now entirely concentrate on the maximal reticulated case for which we obtain from Theorem~\ref{formula-TC}:
\[
\mathrm{TC}_{n,n-1}^{(d)}=n!c_{n-1}^{(d)},
\]
where we have set $c_{n}^{(d)}:=c_{n,n}^{(d)}$. By Proposition~\ref{rec-cnk}, this sequence satisfies
\[
c_n^{(d)}=\sum_{m\geq 1}b_{n,m}^{(d)},
\]
where $b_{n,m}^{(d)}:=b_{n,n,m}^{(d)}$  satisfies
\begin{equation}\label{rec-bnm}
b_{n,m}^{(d)}=\binom{m+nd-2}{d-1}\sum_{j=1}^{m}b_{n-1,j}^{(d)}.
\end{equation}
The recurrence (\ref{rec-bnm}) can be brought in a slightly easier form. 

\begin{lmm}
\label{lem:cn-bnm}
We have,
\begin{align}
\label{eq:recbnm}
b_{n,m}^{(d)}=\frac{dn+m-2}{dn+m-d-1}b_{n,m-1}^{(d)}+\binom{dn+m-2}{d-1}b_{n-1,m}^{(d)},\qquad (n\geq 2,0\leq m\leq n)
\end{align}
with initial conditons
$b_{1,1}^{(d)}=1$ and $b_{n,m}^{(d)}=0$ for (i) $n\geq 2$ and $m=-1$; (ii) $n=1$ and $m=0$; and (iii) $n<m$.
\end{lmm}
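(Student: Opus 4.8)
The plan is to start from the recurrence \eqref{rec-bnm} for $b_{n,m}^{(d)}$ and eliminate the cumulative sum $\sum_{j=1}^m b_{n-1,j}^{(d)}$ by taking a suitable ``finite difference'' in the variable $m$. Concretely, I would write down \eqref{rec-bnm} both for $m$ and for $m-1$, namely
\[
b_{n,m}^{(d)}=\binom{m+dn-2}{d-1}\sum_{j=1}^{m}b_{n-1,j}^{(d)},\qquad
b_{n,m-1}^{(d)}=\binom{m+dn-3}{d-1}\sum_{j=1}^{m-1}b_{n-1,j}^{(d)},
\]
and subtract them after clearing the binomial factors. The telescoping of the two partial sums leaves only the single term $b_{n-1,m}^{(d)}$, which is exactly the source of the second summand in \eqref{eq:recbnm}; the ratio of the two consecutive binomial coefficients produces the rational prefactor $\tfrac{dn+m-2}{dn+m-d-1}$ in front of $b_{n,m-1}^{(d)}$.

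In more detail, I would first solve the second display for the shorter sum, $\sum_{j=1}^{m-1}b_{n-1,j}^{(d)} = b_{n,m-1}^{(d)}\big/\binom{m+dn-3}{d-1}$, and substitute $\sum_{j=1}^{m}b_{n-1,j}^{(d)} = \sum_{j=1}^{m-1}b_{n-1,j}^{(d)} + b_{n-1,m}^{(d)}$ into the first display. This yields
\[
b_{n,m}^{(d)}
=\binom{m+dn-2}{d-1}\left(\frac{b_{n,m-1}^{(d)}}{\binom{m+dn-3}{d-1}}+b_{n-1,m}^{(d)}\right),
\]
and the whole computation reduces to simplifying the quotient of binomial coefficients
\[
\frac{\binom{m+dn-2}{d-1}}{\binom{m+dn-3}{d-1}}
=\frac{m+dn-2}{(m+dn-2)-(d-1)}
=\frac{dn+m-2}{dn+m-d-1},
\]
which is the only genuinely computational point and is entirely routine. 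This gives precisely \eqref{eq:recbnm}.

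The remaining obligation is bookkeeping on the boundary: I must check that the stated initial/degenerate conditions are consistent with this manipulation. The case $n=1$ is just the base value $b_{1,1}^{(d)}=1$ with $b_{1,0}^{(d)}=0$. For $n\geq2$ I would verify the edge cases $m=0$ and $m=1$ directly against \eqref{rec-bnm} (so that the telescoping argument, which implicitly assumes $m\geq1$, is not needed there), and confirm that $b_{n,m}^{(d)}=0$ for $n<m$ is preserved by \eqref{eq:recbnm}. The main ``obstacle'' is really only ensuring that dividing by $\binom{m+dn-3}{d-1}$ is legitimate, i.e.\ that this coefficient is nonzero in the relevant range $1\le m\le n$, $d\ge2$, $n\ge2$; since then $m+dn-3\ge d-1$, the binomial coefficient is a positive integer and the division is valid. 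Once these boundary checks are in place, the derivation of \eqref{eq:recbnm} from \eqref{rec-bnm} is complete.
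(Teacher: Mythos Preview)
Your proposal is correct and is essentially the same argument as the paper's: the paper writes the identity $\frac{b_{n,m}^{(d)}}{\binom{dn+m-2}{d-1}}-\frac{b_{n,m-1}^{(d)}}{\binom{dn+m-3}{d-1}}=b_{n-1,m}^{(d)}$ (obtained by dividing \eqref{rec-bnm} at $m$ and $m-1$ by the respective binomial and subtracting) and then rearranges, which is exactly your telescoping step presented in a slightly different order. Your additional care about the nonvanishing of $\binom{m+dn-3}{d-1}$ and the boundary cases is more explicit than the paper's ``easily checked,'' but the substance is identical.
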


\begin{proof}
The recursive structure in (\ref{rec-bnm}) yields
\[
\frac{b_{n,m}^{(d)}}{\binom{dn+m-2}{d-1}}-\frac{ b_{n,m-1}^{(d)}}{\binom{dn+m-3}{d-1}}=b_{n-1,m}^{(d)}.
\]
This gives the  claimed recurrence and the initial conditions are easily checked.
\end{proof}

\newcommand{\dd}{e^{(d)}}
\newcommand{\ddh}{\hat{e}^{(d)}}

To this recurrence, we apply now the method from \cite{ElFaWa}.
Due to the similarities, we only discuss the main differences.
We start with the following transformation of $(b_{n,m}^{(d)})_{0 \leq m \leq n}$ to $(\dd_{i,j})_{\substack{0 \leq i \leq j \\ i-j \text{ even}}}$, which changes the indices and captures the exponential and superexponential terms coming from the binomial coefficient in~\eqref{eq:recbnm}.

\begin{lmm}\label{mod-rec}
We have
\begin{align*}
     b^{(d)}_{n,m} &= \lambda(d)^n (n!)^{d-1} \dd_{n+m,n-m}
     \qquad \text{ with } \qquad
     \lambda(d) = \frac{(d+1)^{d-1}}{(d-1)!},
\end{align*}
where $\dd_{n,m}$ satisfies the following recurrence
{
\small
\begin{align}
    \label{eq:recdnm}
    \dd_{n,m} &= \mu_{n,m}^{(d)} \dd_{n-1,m+1} + \nu_{n,m}^{(d)} \dd_{n-1,m-1}
\end{align}
}
with
\[
\mu^{(d)}_{n,m}=1+\frac{2(d-1)}{(d+1)n + (d-1)m - 2(d+1)}\quad\text{and}\quad \nu^{(d)}_{n,m}=\prod_{i=2}^d \left(1 - \frac{2(m+i)}{(d+1)(n+m)}\right)
\]
for $n \geq 3$ and $m \geq 0$, where $\dd_{n,-1}=\dd_{2,n}=0$ except for $\dd_{2,0}=1/\lambda(d)$.
\end{lmm}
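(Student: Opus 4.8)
The plan is to verify Lemma~\ref{mod-rec} by a direct substitution: I would posit the ansatz $b^{(d)}_{n,m} = \lambda(d)^n (n!)^{d-1} \dd_{n+m,n-m}$ and determine what recurrence $\dd$ must satisfy in order for the substituted quantity to obey the known recurrence~\eqref{eq:recbnm} for $b^{(d)}_{n,m}$. The key observation is a change of indices: setting $N = n+m$ and $M = n-m$ (so that $n = (N+M)/2$ and $m = (N-M)/2$), the two source terms $b^{(d)}_{n,m-1}$ and $b^{(d)}_{n-1,m}$ appearing on the right of~\eqref{eq:recbnm} become, in the new indices, terms indexed by $(N-1,M+1)$ and $(N-1,M-1)$ respectively. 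Indeed, decreasing $m$ by one sends $(N,M)\mapsto(N-1,M+1)$, while decreasing $n$ by one sends $(N,M)\mapsto(N-1,M-1)$. This is exactly the two-term structure of~\eqref{eq:recdnm}, so the ansatz is natural and the parity constraint $N-M$ even (i.e.\ $i-j$ even) is automatically preserved.

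First I would substitute the ansatz into~\eqref{eq:recbnm} and divide through by the common factor $\lambda(d)^n (n!)^{d-1}$, being careful to track how this normalizing factor differs between the three terms. The term $b^{(d)}_{n,m-1}$ carries the same factor $\lambda(d)^n (n!)^{d-1}$ as the left-hand side, so the coefficient $\frac{dn+m-2}{dn+m-d-1}$ passes through essentially unchanged and, after rewriting $dn+m$ in terms of $N=n+m$ and $M=n-m$ via $dn+m = \frac{(d+1)N + (d-1)M}{2} \cdot \tfrac{?}{}$ — more precisely $dn = \tfrac{d}{2}(N+M)$ and $+m = \tfrac12(N-M)$, giving $dn+m = \tfrac{(d+1)N+(d-1)M}{2}$ — I would check that it reproduces $\mu^{(d)}_{N,M}$. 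The term $b^{(d)}_{n-1,m}$ carries the factor $\lambda(d)^{n-1}((n-1)!)^{d-1}$, which differs from the target factor by $\lambda(d)\cdot n^{d-1}$; combining this ratio with the binomial coefficient $\binom{dn+m-2}{d-1} = \frac{(dn+m-2)(dn+m-3)\cdots(dn+m-d)}{(d-1)!}$ and the definition $\lambda(d)=\frac{(d+1)^{d-1}}{(d-1)!}$ should, after the index substitution, collapse into the product $\nu^{(d)}_{N,M}=\prod_{i=2}^d\bigl(1-\tfrac{2(M+i)}{(d+1)(N+M)}\bigr)$.

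I anticipate that the main obstacle is the verification of the $\nu$-coefficient, since it is the one place where the superexponential factor $(n!)^{d-1}$ and the full length-$(d-1)$ falling factorial inside the binomial coefficient interact. Concretely I would write
\[
\frac{\lambda(d)^{n-1}((n-1)!)^{d-1}}{\lambda(d)^n (n!)^{d-1}}\binom{dn+m-2}{d-1}
= \frac{1}{\lambda(d)\, n^{d-1}}\cdot\frac{\prod_{i=1}^{d-1}(dn+m-1-i)}{(d-1)!},
\]
then substitute $\lambda(d)(d-1)! = (d+1)^{d-1}$ to get $\frac{\prod_{i=1}^{d-1}(dn+m-1-i)}{(d+1)^{d-1}n^{d-1}} = \prod_{i=1}^{d-1}\frac{dn+m-1-i}{(d+1)n}$. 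The remaining task is purely algebraic: rewriting each factor $\frac{dn+m-1-i}{(d+1)n}$ in terms of $N$ and $M$ and matching it, after reindexing the product from $i=1,\dots,d-1$ to $i=2,\dots,d$, against $1-\tfrac{2(M+i)}{(d+1)(N+M)}$. Since $N+M=2n$, the denominator $(d+1)(N+M)=2(d+1)n$ matches $(d+1)n$ up to the factor $2$ that combines with the numerator, so I would confirm that $dn+m-1-i$ equals $\tfrac12\bigl((d+1)(N+M)-2(M+i')\bigr)$ for the shifted index $i'=i+1$. Finally I would check the initial and boundary conditions ($b^{(d)}_{1,1}=1$ forcing $\dd_{2,0}=1/\lambda(d)$, and the vanishing conditions translating to $\dd_{n,-1}=\dd_{2,n}=0$) directly from the ansatz, completing the proof.
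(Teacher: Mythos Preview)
Your proposal is correct and is exactly the intended argument: the paper does not spell out a proof of this lemma at all, treating it as a routine verification (it merely says ``we start with the following transformation'' and relegates computations to the accompanying Maple worksheet). Your direct substitution of the ansatz into~\eqref{eq:recbnm}, the change of indices $N=n+m$, $M=n-m$ with $dn+m=\tfrac{(d+1)N+(d-1)M}{2}$, and the identification of the $\mu$- and $\nu$-coefficients are precisely the calculation one is meant to carry out, and your algebra for both coefficients and for the initial/boundary data checks out.
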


Now, we are interested in
\[
\dd_{2n,0} = \frac{b^{(d)}_{n,n}}{\lambda(d)^n (n!)^{d-1}}
\]
because by the previous lemmas and \eqref{rec-bnm} we have
{
\newcommand{\myeq}{\hspace{-0.5pt}=\hspace{-0.5pt}}
\begin{align}
    \label{eq:thetachain}
    \mathrm{TC}^{(d)}_n
    \myeq \Theta\left( \mathrm{TC}^{(d)}_{n,n-1} \right)
    \myeq \Theta\left( n!c_{n-1}^{(d)} \right)
    \myeq \Theta\left( n! \,  n^{1-d}  b_{n,n}^{(d)} \right)
    \myeq \Theta\left( (n!)^d  \lambda(d)^n n^{1-d}  \dd_{2n,0} \right)%
    \!.
\end{align}
}%
Moreover, observe that for the Theta-result, the initial value of $\dd_{2,0}$ is irrelevant, as it creates only a constant factor. So we may set it to $\dd_{2,0}=1$, or any convenient constant.
Note that this recurrence is very similar to that of relaxed trees~\cite[Equation~(2)]{ElFaWa}, yet with more complicated factors. Observe also that this is exactly recurrence~\cite[Equation~(10)]{FuYuZh} for $d=2$.

\newcommand{\bb}{B}
\newcommand{\aiarg}{\alpha}

Motivated by experiments for large $n$, we use the following ansatz
\begin{align*}
    \dd_{n,m} \approx h(n) f\left(\frac{m+1}{n^{1/3}}\right),
\end{align*}
where $h$ and $f$ are some ``regular'' functions. Next, we substitute $s(n)=h(n)/h(n-1)$ and $m=\kappa n^{1/3}-1$ into~\eqref{eq:recdnm}. Then, for $n\to\infty$ we get the expansion
\begin{align*}
    f(\kappa)s(n) &= 2 f(\kappa) + \left(f''(\kappa) - \frac{2(d-1)}{d+1} \kappa f(\kappa)\right) n^{-2/3} + {\mathcal O}\left(n^{-1}\right).
\end{align*}
Hence, we may assume that
\[
s(n) = 2 + c_1 n^{-2/3} + c_2 n^{-1} + \dots
\]
and this implies that $f(\kappa)$ satisfies the differential equation
\[
    f''(\kappa) = \left(c_1 + \frac{2(d-1)}{d+1}\kappa\right)f(\kappa)
\]
that is solved by the Airy function $\Ai$ of the first kind, as we have $e_{n,m}=0$ for $m>n$ which corresponds to $\lim_{x \to \infty} f(x)=0$.
Additionally, the boundary conditions allow us to compute $c_1$ and we get that
\begin{align}
    \label{eq:fansatz}
    f(\kappa) = C \Ai \left( a_1 + \bb^{1/3} \kappa \right),
    \qquad
    \text{ where }
    \qquad
    \bb: = \frac{2(d-1)}{d+1},
\end{align}
$a_1 \approx 2.338$ is the largest root of the Airy function $\Ai$, and $C$ is an arbitrary constant.
From this we get that $c_1 = a_1 \bb^{1/3}$.
These heuristic arguments guide us to the following results.
The proofs are analogous to~\cite{ElFaWa,ElFaWa2020,FuYuZh}; for the details we refer to the accompanying Maple worksheet~\cite{Wa}.
Note that the next two results generalize~\cite[Propositions~4 and 5]{FuYuZh}, whose results are recovered by setting $d=2$.
Its technical proofs can be found in Appendix~\ref{App-C}.

\begin{pro}
	\label{lem:AiryXLower}
	For all $n,m\geq0$ let
	\begin{align*}	
		\tilde{X}_{n,m} &:= \left(1-\frac{2d-1}{3(d+1)}\frac{m^2}{n} - \frac{3d^2+12d-11}{6(d+1)}\frac{m}{n}\right){\rm Ai}\left(a_{1}+\frac{\bb^{1/3}(m+1)}{n^{1/3}}\right)\quad\text{and}\\
		 \tilde{s}_n &:= 2+\frac{a_1 \bb^{2/3}}{n^{2/3}} - \frac{3d^2-5d+4}{3(d+1)n} - \frac{1}{n^{7/6}}.
	\end{align*}
	Then, for any $\varepsilon>0$, there exists an $\tilde{n}_0$ such that
	\begin{align*}
		\tilde{X}_{n,m}\tilde{s}_{n} \leq \mu_{n,m}^{(d)} \tilde{X}_{n-1,m+1} \!+\! \nu_{n,m}^{(d)} \tilde{X}_{n-1,m-1}
	\end{align*}
	for all $n\geq \tilde{n}_0$ and for all $0 \leq m < n^{2/3-\varepsilon}$, where $\mu^{(d)}_{n,m}$ and $\nu^{(d)}_{n,m}$ are as in Lemma~\ref{mod-rec}.
\end{pro}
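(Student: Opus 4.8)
The plan is to verify the claimed inequality
\[
\tilde{X}_{n,m}\tilde{s}_{n} \leq \mu_{n,m}^{(d)} \tilde{X}_{n-1,m+1} + \nu_{n,m}^{(d)} \tilde{X}_{n-1,m-1}
\]
by a careful asymptotic expansion of both sides in powers of $n^{-1/3}$, treating $\tilde{X}$ as a supersolution of the recurrence~\eqref{eq:recdnm}. The heuristic derivation preceding this proposition already shows that the Airy function $\Ai(a_1 + \bb^{1/3}\kappa)$ with $\kappa = (m+1)/n^{1/3}$ satisfies the differential equation $f''(\kappa) = (a_1\bb^{1/3} + \bb\kappa)f(\kappa)$ to leading order; the correction factor in front of the Airy term in $\tilde{X}_{n,m}$ and the subleading terms in $\tilde{s}_n$ are precisely the refinements needed to push the error past the critical order so that the inequality holds with a definite sign. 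Accordingly, the main engine is a Taylor expansion.

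First I would fix the scaling $\kappa := (m+1)/n^{1/3}$ and expand the shifted Airy arguments on the right-hand side: writing $\Ai(a_1 + \bb^{1/3}(m+2)/(n-1)^{1/3})$ and $\Ai(a_1 + \bb^{1/3}m/(n-1)^{1/3})$ as perturbations of $\Ai(a_1 + \bb^{1/3}\kappa)$, using $(n-1)^{-1/3} = n^{-1/3}(1 + \tfrac{1}{3}n^{-1} + \cdots)$ and expanding $\Ai$ and its derivatives via $\Ai''(z) = z\,\Ai(z)$ to eliminate second and higher derivatives. Simultaneously I would expand the coefficients $\mu_{n,m}^{(d)}$ and $\nu_{n,m}^{(d)}$ from Lemma~\ref{mod-rec}: $\mu_{n,m}^{(d)} = 1 + \tfrac{2(d-1)}{(d+1)n}(1 + O(\kappa n^{-1/3} + n^{-1}))$, and $\nu_{n,m}^{(d)} = \prod_{i=2}^d(1 - \tfrac{2(m+i)}{(d+1)(n+m)})$, which I would expand as $1 - \bb\kappa n^{-1/3} + (\text{lower order})$ by collecting the $d-1$ factors. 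The sum $\mu_{n,m}^{(d)}\tilde{X}_{n-1,m+1} + \nu_{n,m}^{(d)}\tilde{X}_{n-1,m-1}$ then becomes $\tilde{X}_{n,m}$ times an expansion of the form $2 + a_1\bb^{2/3}n^{-2/3} - \tfrac{3d^2-5d+4}{3(d+1)}n^{-1} + O(n^{-7/6})$, which is exactly $\tilde{s}_n$ up to the deliberately inserted safety term $-n^{-7/6}$. Because $\tilde{s}_n$ carries that extra strictly negative $-n^{-7/6}$ term while the true expansion of the right-hand side matches only down to order $n^{-1}$, the gap is of order $n^{-7/6}$ and has the correct sign, yielding the inequality for all large $n$.

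The key bookkeeping step is organizing the error terms uniformly in $m$ over the whole range $0 \leq m < n^{2/3-\varepsilon}$. Here $\kappa = (m+1)/n^{1/3}$ can grow like $n^{1/3-\varepsilon}$, so the expansions are not in a bounded variable; I would therefore track the error in the form $n^{-1}\cdot P(\kappa)\cdot\Ai(\cdots)$ and related derivative terms, and use that $\Ai(a_1 + \bb^{1/3}\kappa)$ together with all its derivatives decays super-exponentially as $\kappa \to +\infty$. This decay means that products of polynomial-in-$\kappa$ factors against the Airy term stay bounded (indeed tend to zero) uniformly, so the $n^{-7/6}$ safety margin dominates the genuine error uniformly once $n \geq \tilde n_0$. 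I expect the main obstacle to be exactly this uniform control: keeping the polynomial-in-$m$ prefactor of $\tilde{X}_{n,m}$ (the parenthetical $1 - \tfrac{2d-1}{3(d+1)}\tfrac{m^2}{n} - \cdots$) consistent across the index shifts $m \mapsto m\pm 1$ and $n\mapsto n-1$, and confirming that after all cancellations the residual term genuinely factors as a negative multiple of $\tilde{X}_{n,m}$ of order $n^{-7/6}$ rather than merely $n^{-1}$. Since the computation is lengthy and mechanical, I would follow the strategy of~\cite{ElFaWa,FuYuZh} and delegate the symbolic verification to the accompanying Maple worksheet~\cite{Wa}, presenting in the text only the structure of the expansion and the sign of the decisive term.
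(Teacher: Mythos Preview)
Your overall plan---Taylor expand both sides around $\alpha=a_1+\bb^{1/3}(m+1)/n^{1/3}$, reduce higher Airy derivatives via $\Ai''(z)=z\Ai(z)$, and then show the residual has a definite sign---is exactly the strategy the paper uses. But two of your key claims do not go through as stated.

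First, the difference $\mu_{n,m}^{(d)}\tilde X_{n-1,m+1}+\nu_{n,m}^{(d)}\tilde X_{n-1,m-1}-\tilde s_n\tilde X_{n,m}$ does \emph{not} factor as $\tilde X_{n,m}$ times a scalar expansion. After reducing via the Airy ODE one obtains an expression of the form $p_{n,m}\Ai(\alpha)+p'_{n,m}\Ai'(\alpha)$ with two independent coefficient series, and both must be controlled. The paper makes this decomposition explicit and analyzes the Newton diagram of each series separately; your write-up suppresses the $\Ai'$ contribution entirely.

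Second, and more seriously, your uniform-in-$m$ argument is circular. You argue that error terms of the shape $n^{-1}P(\kappa)\Ai(\alpha)$ are dominated by the safety term $n^{-7/6}$ because Airy decay kills polynomial growth in $\kappa$. But the safety term itself is $n^{-7/6}\Ai(\alpha)$: it carries the \emph{same} Airy factor. In the ratio the Airy decay cancels, and you are left comparing $n^{-7/6}$ against, for instance, the explicit residual term $-\tfrac{(23d^2-14d+5)}{9(d+1)^2}\,m^2 n^{-2}\Ai(\alpha)$ appearing in the paper's expansion. For $m\sim n^{5/12}$ this competitor is of order $n^{-7/6}$ as well, and the inequality does not follow from size alone. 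The paper handles this by a genuine case split into three regimes $m\le Cn^{1/3}$, $Cn^{1/3}<m\le n^{7/18}$, $n^{7/18}<m<n^{2/3-\varepsilon}$, in each of which a different collection of boundary terms on the Newton polygon dominates and their signs are checked individually. That regime analysis, together with the interplay of the $\Ai(\alpha)$ and $\Ai'(\alpha)$ coefficients (using $\Ai'(\alpha)/\Ai(\alpha)\sim-\sqrt{\alpha}$ for large $\kappa$), is the substantive content you are missing; it is not captured by a blanket appeal to Airy decay.
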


\begin{pro}
	\label{lem:AiryXUpper}
	Choose $\eta > \frac{(2d-1)^2}{18(d+1)^2}$ fixed and for all $n,m \geq 0$ let
	\begin{align*}
		\hat{X}_{n,m} &:= \left(1 - \frac{2d-1}{3(d+1)}\frac{m^2}{n} - \frac{3d^2+12d-11}{6(d+1)}\frac{m}{n} + \eta\frac{m^4}{n^2}\right){\rm Ai}\left(a_{1}+\frac{\bb^{1/3}(m+1)}{n^{1/3}}\right)\quad\text{and}\\
		\hat{s}_n &:= 2 + \frac{a_1 \bb^{2/3}}{n^{2/3}} -  \frac{3d^2-5d+4}{3(d+1)n} + \frac{1}{n^{7/6}}.
	\end{align*}
	Then, for any $\varepsilon>0$, there exists a constant $\hat{n}_0$ such that
	\begin{align*}
		\hat{X}_{n,m}\hat{s}_{n} \geq \mu_{n,m}^{(d)} \hat{X}_{n-1,m+1} \!+\! \nu_{n,m}^{(d)} \hat{X}_{n-1,m-1}
	\end{align*}
	for all $n\geq \hat{n}_0$ and all $0 \leq m < n^{1-\varepsilon}$.
\end{pro}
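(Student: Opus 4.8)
The plan is to divide the claimed inequality by the common Airy factor, expand the resulting ratio asymptotically in powers of $n^{-1/3}$, use the Airy differential equation to cancel all dominant orders, and then verify that the surviving discrepancy is nonnegative uniformly for $0\le m<n^{1-\varepsilon}$; the roles of the added quartic term $\eta m^4/n^2$ and of the $+n^{-7/6}$ margin are precisely to secure this sign over the (wide) admissible range. First I would write $\hat X_{n,m}=Q_{n,m}\,\Ai(z_{n,m})$ with $z_{n,m}:=a_1+\bb^{1/3}(m+1)n^{-1/3}$ and $Q_{n,m}$ the polynomial prefactor, and divide the target inequality by $\Ai(z_{n,m})>0$ (positivity holds since $z_{n,m}\ge a_1$). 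This reduces the claim to
\[
Q_{n,m}\hat s_n \ \ge\ \mu^{(d)}_{n,m}Q_{n-1,m+1}\frac{\Ai(z_{n-1,m+1})}{\Ai(z_{n,m})}+\nu^{(d)}_{n,m}Q_{n-1,m-1}\frac{\Ai(z_{n-1,m-1})}{\Ai(z_{n,m})}.
\]

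The arguments $z_{n-1,m\pm1}$ differ from $z_{n,m}$ by $\delta_\pm=\pm\bb^{1/3}n^{-1/3}+\tfrac13\bb^{1/3}(m+1)n^{-4/3}+\cdots$, coming jointly from $m\mapsto m\pm1$ and $n\mapsto n-1$. Taylor-expanding $\Ai(z+\delta_\pm)/\Ai(z)$ and replacing every occurrence of $\Ai''(z)$ by $z\,\Ai(z)$ via $\Ai''=z\Ai$ (and iterating for higher derivatives) turns each ratio into $1+(\Ai'/\Ai)(z)\,\delta_\pm+\tfrac12 z\,\delta_\pm^2+\cdots$, an expansion whose coefficients are polynomials in $z$ and in the single transcendental $g(z):=(\Ai'/\Ai)(z)$. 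Next I would multiply out the right-hand side, expand $\mu^{(d)}_{n,m}$, $\nu^{(d)}_{n,m}$, and the $Q$-ratios in $n^{-1/3}$, and collect against the left-hand side $Q_{n,m}\hat s_n$. The coefficients of $Q_{n,m}$ (namely $\tfrac{2d-1}{3(d+1)}$ and $\tfrac{3d^2+12d-11}{6(d+1)}$) and of $\hat s_n$ (namely $a_1\bb^{2/3}$ and $\tfrac{3d^2-5d+4}{3(d+1)}$) are exactly the solutions of the linear systems that force the orders $n^0$, $n^{-1/3}$, $n^{-2/3}$, and $n^{-1}$ to cancel; these cancellations are the content of the Maple verification in~\cite{Wa} and proceed identically to the $d=2$ computation of~\cite[Proposition~5]{FuYuZh}.

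After cancellation, the difference $\mathrm{LHS}-\mathrm{RHS}$ equals the explicit $+n^{-7/6}$ contribution (arising from the sign of the last term of $\hat s_n$) plus a remainder whose leading piece is a \emph{negative} multiple of $m^4/n^2$ with coefficient $\tfrac{(2d-1)^2}{18(d+1)^2}$ — this is the square $\tfrac12\bigl(\tfrac{2d-1}{3(d+1)}\bigr)^2$ produced when the quadratic term $\tfrac{2d-1}{3(d+1)}\tfrac{m^2}{n}$ of $Q$ feeds into the second-order Airy expansion — together with the positive contribution $+\eta m^4/n^2$ and strictly higher-order terms. For bounded or slowly growing $m$ the explicit $+n^{-7/6}$ term dominates the $\LandauO(n^{-4/3})$ remainder, giving positivity for all $n\ge\hat n_0$; for larger $m$ the negative $m^4/n^2$ piece is dominated by $\eta m^4/n^2$ exactly because $\eta>\tfrac{(2d-1)^2}{18(d+1)^2}$, which is where the threshold in the hypothesis originates.

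The main obstacle is establishing this nonnegativity uniformly over the whole range $0\le m<n^{1-\varepsilon}$, which is far wider than the range $m<n^{2/3-\varepsilon}$ permitted in Proposition~\ref{lem:AiryXLower}, and is precisely why the supersolution needs the degree-four correction absent from the subsolution. For $m$ up to $n^{1-\varepsilon}$ the argument $z_{n,m}$ grows like $n^{2/3-\varepsilon}$, so the Airy Taylor remainders are no longer automatically negligible and must be controlled through the genuine asymptotics $\Ai(z)\sim\tfrac1{2\sqrt\pi}z^{-1/4}e^{-\frac23 z^{3/2}}$ and $g(z)\sim-\sqrt z$. The delicate point is to run the two competing effects simultaneously — the super-exponential decay of $\Ai$ against the polynomial growth of $Q_{n,m}$ — verifying both that $Q_{n,m}$ stays positive throughout the admissible range and that the $\eta$-term's positive contribution outweighs every negative contribution; handling this uniform bookkeeping is the technically demanding part of the argument, and it mirrors the corresponding step in~\cite{ElFaWa,ElFaWa2020}.
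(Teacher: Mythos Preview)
Your proposal is correct and follows essentially the same approach as the paper: both arguments Taylor-expand the Airy factors, use the differential equation $\Ai''=z\Ai$ to reduce everything to a combination of $\Ai$ and $\Ai'$, verify (via computer algebra, deferred to~\cite{Wa,ElFaWa,ElFaWa2020}) that the specific coefficients in $\hat X_{n,m}$ and $\hat s_n$ kill the leading orders, and then check the sign of the remainder by splitting the range of $m$. The only cosmetic difference is that the paper keeps the decomposition $P_{n,m}=p_{n,m}\Ai(\alpha)+p'_{n,m}\Ai'(\alpha)$ rather than dividing through by $\Ai(z_{n,m})$ and working with $g=\Ai'/\Ai$; your identification of the threshold $\eta>\tfrac12\bigl(\tfrac{2d-1}{3(d+1)}\bigr)^2$ as the square of the quadratic coefficient is a nice addition that the paper leaves implicit.
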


These Propositions allow us now to prove Theorem~\ref{gen-tc-max-k} on the asymptotics of general $d$-combining tree-child networks with $n$ leaves.

\begin{proof}[Proof of Theorem~\ref{gen-tc-max-k}]
    Let us start with the lower bound.

    We first define a sequence $X_{n,m} : =\max\{\tilde{X}_{n,m},0\}$ which satisfies the inequality of Proposition~\ref{lem:AiryXLower} for \emph{all} $m \leq n$.
    Then, we define an explicit sequence $\tilde{h}_n := \tilde{s}_n \tilde{h}_{n-1}$ for $n >0$ and $\tilde{h}_0 = \tilde{s}_0$.
    From this, we get by induction that $\dd_{n,m} \geq C_0 \tilde{h}_{n} X_{n,m}$ for some constant $C_0>0$ and all $n \geq \tilde{n}_0$ and all $0 \leq m \leq n$. Hence,
   \begin{align*}
       \dd_{2n,0}
       &\geq C_0 \tilde{h}_{2n} X_{2n,0}\\
       &\geq C_0 \prod_{i=1}^{2n} \left(2+\frac{a_1 \bb^{2/3}}{i^{2/3}} - \frac{3d^2-5d+4}{3(d+1)i} - \frac{1}{i^{7/6}} \right){\rm Ai}\left(a_{1}+\frac{\bb^{1/3}}{(2n)^{1/3}}\right) \\
       &\geq C_1 4^n e^{3 a_1 (\bb/2)^{2/3} n^{1/3}} n^{\frac{d^2+d-2}{2(d+1)}}.
   \end{align*}
   Finally, combining this with~\eqref{eq:thetachain} we get the lower bound.

   The upper bound is similar, yet more technical.

   The starting point is Proposition~\ref{lem:AiryXUpper} and a function $X_{n,m}$ that is valid for \emph{all} $0 \leq m \leq n$.
   For this purpose we define a sequence $\ddh_{n,m}$ such that $\ddh_{n,m} := \dd_{n,m}$ for $0 \leq m \leq n^{1-\varepsilon}$ and $\ddh_{n,m} := 0$ otherwise; compare with~\cite{ElFaWa,ElFaWa2020}.
   The missing key step is now to show that $\dd_{2n,0} ={\mathcal O}(\ddh_{2n,0})$.
   Combining this with the analogous computations performed for the lower bound we get
   \begin{align*}
       \ddh_{2n,0}
       &\leq \hat{C}_1 4^n e^{3 a_1 (\bb/2)^{2/3} n^{1/3}} n^{\frac{d^2+d-2}{2(d+1)}}.
   \end{align*}

   To complete the prove we show $\dd_{2n,0} \leq 2 \ddh_{2n,0}$ using lattice path theory and computer algebra.
   The argument follows along the same lines as in~\cite[Appendix]{FuYuZh}.
   We start from Equation~\eqref{eq:recdnm} of $\dd_{n,m}$, which we interpret as a recurrence counting lattice paths.
   They are composed of steps $(1,1)$ weighted by $\mu_{n,m}^{(d)}$ and $(1,-1)$ weighted by $\nu_{n,m}^{(d)}$ when the respective step ends at $(n,m)$.
   The total weight of a path is the product of its weights.
   Now, we are interested in the paths never crossing $y=0$ and ending at $(2n,0)$.
   Let now $p_{\ell,k,2n}$ be the number of such paths starting at $(\ell,k)$ and ending at $(2n,0)$.
   From~\eqref{eq:recdnm} we directly get
   \begin{align*}
       p_{\ell,k,2n} =
        \mu_{\ell+1,k-1}^{(d)} p_{\ell+1,k-1,2n}
        +
        \nu_{\ell+1,k+1}^{(d)} p_{\ell+1,k+1,2n},
   \end{align*}
   with $p_{\ell,-1,2n}=0$ and $p_{2n,k,2n} = \delta_{k,0}$.

   Now, as in~\cite{FuYuZh} we are able to show that
   \begin{align}
        \label{eq:plj2ninequ}
       \frac{p_{\ell,j,2n}}{(j+1)^2} \geq \frac{p_{\ell,k,2n}}{(k+1)^2},
   \end{align}
   for integers $0 \leq j < k  \leq \ell \leq 2n$ such that $2 \mid k-j$.
   For the technical details, using reverse induction on $\ell$, we refer to our accompanying Maple worksheet~\cite{Wa}.

   Finally, from~\eqref{eq:plj2ninequ} we directly get
   \begin{align}
        \label{eq:p-2x2y-2x0}
       p_{2x,2y,2n} \leq (2y+1)^2 p_{2x,0,2n},
   \end{align}
   which we need to apply~\cite[Lemma~4.6]{ElFaWa} together with the bound $\dd_{2x,2y} \leq \binom{2x}{x+y}$, which holds due to the same reasons as in~\cite{FuYuZh}: combining the weights of up an down steps gives a weight less than one.
   This proves $\dd_{2n,0} \leq 2 \ddh_{2n,0}$ and ends the proof of Theorem~\ref{gen-tc-max-k}.
\end{proof}

\begin{rem}
   Note that in~\cite{ElFaWa,ElFaWa2020} a stronger result than~\eqref{eq:plj2ninequ} was proved, where the powers in the denominators are $1$ instead of $2$.
   However, any polynomial coefficient in~\eqref{eq:p-2x2y-2x0} suffices to get the same result using~\cite[Lemma~4.6]{ElFaWa}.

   With the same strategy as in~\cite{ElFaWa,ElFaWa2020}, we could show the stronger result $p_{2x,2y,2n} \leq 2 (2y+1) p_{2x,0,2n}$, however, then other technicalities arise: the value $j=0$ and the range $0 \leq j < k  \leq \ell \leq 2n$ have to be treated separately.
\end{rem}

\subsection{Number of Reticulation Nodes}\label{numb-ret-tc}

This section contains the proof of Theorem~\ref{ll-gen-tc}. Since the proof for $d=2$ and $d\geq 3$ is different, we split it into two subsections (Section~\ref{ll-d-2} and Section~\ref{ll-d-larger-2} below). A final subsection contains the proof of Corollary~\ref{cor-1}.

\subsubsection{Bicombining Networks}\label{ll-d-2}

In this section, we consider the case $d=2$. For convenience, we drop the superindex in the notation.

We start with the following bounds for $\mathrm{TC}_{n,k}$.

\begin{lmm}
For $1\leq k\leq n-1$,
\begin{equation}\label{ub-lb}
\frac{n-k}{k(3n-k-3)}\mathrm{TC}_{n,n-k}\leq\mathrm{TC}_{n,n-1-k}\leq\frac{1}{2k}\mathrm{TC}_{n,n-k}.
\end{equation}
\end{lmm}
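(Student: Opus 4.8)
The right-hand (upper) bound is essentially free: it is the inequality \eqref{tc-number-ub} of Lemma~\ref{main-obs-TC} read with its reticulation index specialised to $n-1-k$. Since $1\le k\le n-1$ forces $0\le n-1-k\le n-2$, this specialisation is admissible and gives directly
\[
\mathrm{TC}_{n,n-1-k}\le \frac{1}{2\,(n-(n-1-k)-1)}\,\mathrm{TC}_{n,n-k}=\frac{1}{2k}\,\mathrm{TC}_{n,n-k}.
\]

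For the lower bound the plan is a double-counting argument over a set $\mathcal{A}$ of ``insertion data''. Given a tree-child network $N$ with $n$ leaves and $n-1-k$ reticulation nodes, I add one reticulation node as follows: choose one of its free edges $f\to c$ (there are $2k$ of them by Lemma~\ref{free-nodes}) and subdivide it by a new node $r$, so that $f\to r\to c$; then choose a second edge $e=(g\to h)$ whose head $h$ is not a reticulation node, subdivide it by a new tree node $p$, and add the arc $p\to r$ (which gives $r$ in-degree $2$, turning it into a reticulation node). Using a \emph{free} edge for $r$ guarantees that $f$ retains a non-reticulation child, and requiring $h$ to be non-reticulation does the same for $p$, so the resulting $N'$ is again tree-child, now with $n-k$ reticulation nodes. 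Let $\mathcal{A}$ be the set of all pairs (network $N$, such an insertion). A short handshaking count shows $N$ has exactly $3n-k-2$ non-reticulation-incoming edges; since the candidate edges $e$ are among these, excluding the chosen free edge, each $N$ admits at most $2k\,(3n-k-3)$ insertions, whence $|\mathcal{A}|\le 2k\,(3n-k-3)\,\mathrm{TC}_{n,n-1-k}$.

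The crux is to show that the induced map $\Phi\colon\mathcal{A}\to\{(N',r)\colon N'\text{ has }n-k\text{ reticulation nodes and }r\text{ is a reticulation node of }N'\}$ is onto with every fibre of size (at least, in fact exactly) two. Given a target $(N',r)$ with $r$ having parents $p_1,p_2$ and child $c$, I recover a preimage by deleting the arc $p_2\to r$ and suppressing the two resulting degree-one vertices $r$ and $p_2$: this merges $p_1\to r\to c$ into a free edge $p_1\to c$ and removes $p_2$, producing a tree-child network with $n-1-k$ reticulation nodes together with the insertion that rebuilds $(N',r)$. Exchanging the roles of $p_1$ and $p_2$ yields a second preimage, and since in $N'$ one cannot intrinsically tell which parent of $r$ played the role of $f$, these are the only two. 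Hence $|\mathcal{A}|=2(n-k)\,\mathrm{TC}_{n,n-k}$, and combining with the upper bound on $|\mathcal{A}|$ gives $2(n-k)\,\mathrm{TC}_{n,n-k}\le 2k\,(3n-k-3)\,\mathrm{TC}_{n,n-1-k}$, which is the left-hand inequality in \eqref{ub-lb}.

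The main obstacle is the bookkeeping behind the two-to-one claim. One must check that \emph{both} desuppressions (via $p_1$ and via $p_2$) always yield genuine tree-child networks — in particular that no reticulation-after-reticulation or root-after-reticulation configuration arises, which follows because the other child of each parent of $r$ is non-reticulation by the tree-child property of $N'$; that the edge carrying the suppressed second parent is a legitimate candidate edge of the smaller network (a parent of $r$ is never a descendant of $r$, so acyclicity is preserved); and that the two preimages are distinct elements of $\mathcal{A}$. The exact candidate-edge count $3n-k-3$ likewise rests on the handshaking identities for the numbers of tree nodes and edges; beyond these verifications the argument is routine.
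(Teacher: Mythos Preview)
Your argument is correct and follows essentially the same double-counting approach as the paper: the upper bound is the specialisation of \eqref{tc-number-ub}, and the lower bound comes from inserting a new reticulation node on a free edge with its second parent on another non-reticulation-incoming edge, then counting preimages of each resulting $(N',r)$. Your write-up is in fact more careful than the paper's, which simply asserts that ``each network is created exactly $2(n-k)$ times'' without spelling out the tree-child and simplicity checks for the reverse operation that you highlight.
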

\pf The upper bound follows from (\ref{tc-number-ub}).

For the lower bound, we generalize the argument from \cite[Lemma 3]{FuYuZh}. Therefore, consider a tree-child network $N$ with $n$ leaves and $n-1-k$ reticulation nodes. From Lemma~\ref{free-nodes}, we know that $N$ has $2k$ free edges. Moreover, $N$ has $3n-k-2$ edges which do not end in a reticulation node; see (\ref{tree-nodes}). Now, by inserting a node into a tree edge and connecting it to a node which is inserted into a free edge, we obtain at most $2k(3n-k-3)\mathrm{TC}_{n,n-1-k}$ tree-child networks with $n$ leaves and $n-k$ reticulation nodes (as those with cycles have to be discarded). On the other hand, each network is created from a latter network exactly $2(n-k)$ times. Thus,
\[
2(n-k)\mathrm{TC}_{n,n-k}\leq 2k(3n-k-3)\mathrm{TC}_{n,n-1-k}.
\]
which proves the lower bound.\qed

From the above bounds, we deduce the following lemma.

\begin{lmm}\label{ub-lb-2}
We have,
\[
\frac{1}{3^kk!}(1+o(1))\mathrm{TC}_{n,n-1}\leq\mathrm{TC}_{n,n-1-k}\leq\frac{1}{2^kk!}\mathrm{TC}_{n,n-1}
\]
uniformly in $k=o(\sqrt{n})$.
\end{lmm}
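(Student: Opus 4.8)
The plan is to derive both bounds by iterating the one-step estimates \eqref{ub-lb} from the preceding lemma, turning them into a telescoping product. Set $u_k := \mathrm{TC}_{n,n-1-k}$, so that $u_0 = \mathrm{TC}_{n,n-1}$ and $\mathrm{TC}_{n,n-k} = u_{k-1}$. Then for $1 \le k \le n-1$ the two inequalities in \eqref{ub-lb} read
\[
\frac{n-k}{k(3n-k-3)}\, u_{k-1} \;\le\; u_k \;\le\; \frac{1}{2k}\, u_{k-1}.
\]

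For the upper bound I would simply iterate the right-hand inequality from index $k$ down to $1$, giving
\[
u_k \;\le\; \Bigg(\prod_{j=1}^{k}\frac{1}{2j}\Bigg)\, u_0 \;=\; \frac{1}{2^k k!}\,\mathrm{TC}_{n,n-1}.
\]
This introduces no error term and in fact holds for every $1 \le k \le n-1$, which is already the claimed upper bound.

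For the lower bound I would iterate the left-hand inequality to obtain
\[
u_k \;\ge\; u_0 \prod_{j=1}^{k}\frac{n-j}{j(3n-j-3)} \;=\; \frac{\mathrm{TC}_{n,n-1}}{k!}\prod_{j=1}^{k}\frac{n-j}{3n-j-3},
\]
and it then remains to show that $\prod_{j=1}^{k}\frac{n-j}{3n-j-3} = 3^{-k}(1+o(1))$ uniformly for $k=o(\sqrt{n})$. Writing each factor as $\frac{n-j}{3n-j-3} = \tfrac{1}{3}\cdot\frac{1-j/n}{1-(j+3)/(3n)}$ and Taylor-expanding shows $\frac{n-j}{3n-j-3} = \tfrac{1}{3}\bigl(1 + O(j/n)\bigr)$ uniformly over $1 \le j \le k$.

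The only technical point, and the step I expect to require the most care, is controlling the accumulated multiplicative error in this product, since uniformity in $k$ must be maintained. Taking logarithms, $\sum_{j=1}^{k}\log\bigl(1+O(j/n)\bigr) = O\bigl(\sum_{j=1}^{k} j/n\bigr) = O(k^2/n)$, which is $o(1)$ precisely because the hypothesis $k=o(\sqrt{n})$ forces $k^2 = o(n)$. Hence $\prod_{j=1}^{k}\frac{n-j}{3n-j-3} = 3^{-k}(1+o(1))$ uniformly in this range, and substituting back yields $u_k \ge \frac{1}{3^k k!}(1+o(1))\,\mathrm{TC}_{n,n-1}$, completing the proof. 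I anticipate no serious obstacle: the restriction $k=o(\sqrt{n})$ is exactly what renders the accumulated error negligible, so one need only keep the $O$-bound on each factor uniform in $j$ and $k$.
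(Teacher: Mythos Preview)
Your proposal is correct and follows essentially the same approach as the paper: both iterate the one-step bounds of \eqref{ub-lb} and control the accumulated multiplicative error in the lower bound via $k^2/n=o(1)$. The only cosmetic difference is that the paper bounds each factor uniformly by $\tfrac{1}{3j}(1+O(k/n))$ (using the final $k$ rather than the running index $j$) before taking the product, whereas you keep the sharper $O(j/n)$ per factor and sum the logarithms; the resulting error estimate $O(k^2/n)$ is identical.
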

\pf The upper bound follows from iterating the upper bound of (\ref{ub-lb}); see also (\ref{TCnk-upper-bound}).

For the lower bound, observe that
\[
\frac{n-k}{k(3n-k-3)}=\frac{1}{3k}\left(1+{\mathcal O}\left(\frac{k}{n}\right)\right).
\]
Thus, by iterating the lower bound in (\ref{ub-lb}):
\[
\frac{1}{3^kk!}\left(1+{\mathcal O}\left(\frac{k}{n}\right)\right)^k\mathrm{TC}_{n,n-1}\leq\mathrm{TC}_{n,n-1-k}.
\]
From this the result follows since for the indicated range of $k$, we have
\[
\left(1+{\mathcal O}\left(\frac{k}{n}\right)\right)^k=1+{\mathcal O}\left(\frac{k^2}{n}\right)=1+o(1).
\]
This concludes the proof of the lemma.\qed

We next denote by $\mathrm{F}_{n,k}$ resp. $\mathrm{NF}_{n,k}$ the number of tree-child networks with $n$ leaves and $k$ reticulation nodes whose child of the root is free resp. not free.

We start with an easy observation.
\begin{lmm}\label{non-free}
For $1\leq k\leq n-1$, we have $(2k) \mathrm{TC}_{n,n-1-k}=\mathrm{NF}_{n,n-k}$.
\end{lmm}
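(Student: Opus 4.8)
The plan is to realize the factor $2k$ by an explicit bijection, building on the construction already used in the proof of Lemma~\ref{main-obs-TC} specialized to $d=2$. Let $N$ be a network counted by $\mathrm{TC}_{n,n-1-k}$; by Lemma~\ref{free-nodes} it has exactly $k$ free tree nodes, hence $2k$ free edges. For each such free edge $e$, I would form a network $M=M(N,e)$ by inserting a tree node $t$ into the root edge, inserting a reticulation node $r$ into $e$, and adding the edge $t\to r$. Since $t$ sits directly below the root it is an ancestor of every other non-root node, so $t\to r$ creates no cycle, and a routine check of the tree-child conditions (a)--(c) shows $M$ is again a tree-child network, now with $n$ leaves and $n-k$ reticulation nodes. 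The point is to identify its image: the root-child of $M$ is precisely $t$, whose two children are $r$ (a reticulation node) and the former root-child of $N$ (not a reticulation node, by condition (a) for $N$); hence $t$ is \emph{not} free and $M$ is counted by $\mathrm{NF}_{n,n-k}$. This produces $2k\cdot\mathrm{TC}_{n,n-1-k}$ pairs $(N,e)$, each mapping into $\mathrm{NF}_{n,n-k}$.

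Next I would write down the inverse map, which is where the equality comes from. Given any $M$ counted by $\mathrm{NF}_{n,n-k}$, its root-child $t$ is a tree node by condition (a), and being not free it has, by condition (c), exactly one reticulation child $r$ and one non-reticulation child. Let $a$ be the second parent of $r$ (distinct from $t$; note $a$ is not the root, which has out-degree $1$) and let $b$ be the child of $r$. I would then delete $t$ (reconnecting the root to the non-reticulation child of $t$) and suppress $r$ (replacing the path $a\to r\to b$ by a single edge $e:=(a,b)$), obtaining a network $N$ with $n$ leaves and $n-1-k$ reticulation nodes together with the distinguished edge $e$.

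The main thing to verify is that this reverse operation is well defined and genuinely inverse to the forward one; in particular that $e$ is a free edge of $N$. The child $b$ of $r$ is not a reticulation node by condition (b), and the other child of $a$ is not a reticulation node by condition (c) applied to $a$ in $M$ (which already has the reticulation child $r$). Thus after suppression $a$ is a free tree node of $N$ and $(a,b)$ is one of its free edges, so the recovered pair $(N,e)$ indeed lies in the domain of the forward map. One then checks directly that $N$ is tree-child and that the two constructions are mutually inverse, matching each $M\in\mathrm{NF}_{n,n-k}$ with exactly one pair $(N,e)$, which gives $(2k)\,\mathrm{TC}_{n,n-1-k}=\mathrm{NF}_{n,n-k}$.

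I expect the only delicate step to be this reverse verification: one must rule out degeneracies (for instance when the second parent $a$ of $r$ coincides with the non-reticulation child of $t$) and confirm that suppressing $r$ always yields a \emph{free} edge. In each such case the tree-child conditions force the relevant neighbours to be non-reticulation nodes, so the argument goes through, but these cases should be spelled out carefully rather than glossed over. Note that this also recovers, and generalizes from the case $k=1$, the bijection sketched in Remark~\ref{eq-max-ret}.
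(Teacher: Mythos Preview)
Your proposal is correct and follows exactly the same bijection as the paper: insert a node into the root edge and a node into a chosen free edge, then connect them. The paper's own proof is a three-line sketch that simply declares the construction ``clearly reversible''; you have supplied the explicit inverse (identify the unique reticulation child $r$ of the root-child $t$, suppress both) and verified that the recovered edge is indeed free, which is precisely the content the paper leaves implicit.
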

\pf A tree-child network with $n$ leaves and $n-1-k$ reticulation nodes has $2k$ free edges; see Lemma~\ref{free-nodes}. Taking any of these free edges and the root edge, inserting nodes in both edges and connecting the node inserted into the root edge with the other node gives a tree-child network with $n$ leaves and $n-k$ reticulation nodes that is not free. Moreover, this construction is clearly reversible.\qed

\begin{rem}
Note that $\mathrm{NF}_{n,n-1}=\mathrm{TC}_{n,n-1}$. Thus, for $k=1$, the above result shows that equality in~\eqref{tc-number-ub} holds for $k=n-2$; compare with Remark~\ref{eq-max-ret}.
\end{rem}

Another easy observation is the following.
\begin{lmm}\label{free-1}
For $1\leq k\leq n-1$,
\[
\mathrm{F}_{n,n-1-k}\leq\frac{1}{2^{k-1}(k-1)!}\mathrm{F}_{n,n-2}.
\]
\end{lmm}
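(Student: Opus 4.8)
The plan is to establish the one-step inequality
\[
2(k-1)\,\mathrm{F}_{n,n-1-k}\le \mathrm{F}_{n,n-k}\qquad(2\le k\le n-1)
\]
and then to iterate it. Applying it successively for the indices $k,k-1,\dots,2$ collects the factors $2(k-1),2(k-2),\dots,2\cdot1$, whose product is $\prod_{i=1}^{k-1}2i=2^{k-1}(k-1)!$, and the iteration terminates at $\mathrm{F}_{n,n-2}$; this is exactly the claimed bound. (For $k=1$ the statement is the trivial identity $\mathrm{F}_{n,n-2}\le\mathrm{F}_{n,n-2}$.) Note also that the iteration stops one step early because $\mathrm{F}_{n,n-1}=0$: a tree-child network with $n-1$ reticulation nodes has, by Lemma~\ref{free-nodes}, no free tree node, so its root-child (which is a tree node by the tree-child property) cannot be free.

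For the one-step inequality I would argue constructively, adapting the insertion used in the proofs of Lemma~\ref{main-obs-TC} and Lemma~\ref{non-free} but arranging it so that the child of the root stays free. Let $N$ be a free network with $n-1-k$ reticulation nodes, and let $v$ be the (free) child of its root, with outgoing free edges $f_1,f_2$. By Lemma~\ref{free-nodes}, $N$ has $2k$ free edges in total, hence $2(k-1)$ free edges not incident to $v$. For each such free edge $e$, I build a network $M$ by subdividing $e$ with a new reticulation node $r$, subdividing $f_1$ with a new tree node $t$, and adding the edge $t\to r$. Then $v$ retains two non-reticulation children ($t$ and the endpoint of $f_2$), so $v$ stays free and $M$ is again free, with $n-k$ reticulation nodes; acyclicity is automatic because $t$ lies just below the root and hence cannot be a descendant of $r$, and the tree-child property is preserved since we only add nodes. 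This yields $2(k-1)$ networks $M$ from each $N$.

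The crux is injectivity: distinct pairs $(N,e)$ must yield distinct networks $M$, which gives $2(k-1)\,\mathrm{F}_{n,n-1-k}\le \mathrm{F}_{n,n-k}$. Here I would exhibit the inverse map: in any $M$ of the above form, the node $t$ is recognizable as the child of the root-child that is a non-free tree node with a reticulation child $r$; deleting $r$ and suppressing $t$ recovers $N$, and the edge into which $r$ had been inserted recovers $e$. The main obstacle is precisely to pin down this canonical choice of $t$ (placing it always on a fixed, canonically chosen free edge of $v$) so that the reverse map is well defined and single-valued, and to confirm that the number of admissible positions for $r$ is exactly $2(k-1)$ — i.e. that the two free edges incident to $v$ must be excluded (placing $r$ there would turn a child of $v$ into a reticulation node and produce a non-free network, consistent with Lemma~\ref{non-free}), while all remaining free edges are admissible.
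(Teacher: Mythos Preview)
Your approach is essentially the same as the paper's: establish the one-step inequality $2(k-1)\,\mathrm{F}_{n,n-1-k}\le \mathrm{F}_{n,n-k}$ by inserting a new tree node on a free edge incident to the root-child $v$ and a new reticulation node on one of the $2k-2$ remaining free edges, then iterate. The only difference is in bookkeeping the multiplicity of the map. You fix one canonical edge $f_1$ at $v$ and aim for injectivity, but you correctly identify the obstacle yourself: in $M$, both children of $v$ may be non-free tree nodes with a reticulation child, so the inserted node $t$ is not intrinsically distinguished, and a canonical choice made in $N$ need not be recoverable from $M$. The paper sidesteps this cleanly by \emph{not} fixing $f_1$: it uses both edges at $v$, obtaining $2(2k-2)$ pairs, and then observes that each resulting network arises at most twice (since there are only two candidates for $t$, namely the two children of $v$). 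This yields $2(2k-2)\,\mathrm{F}_{n,n-1-k}\le 2\,\mathrm{F}_{n,n-k}$, hence the same one-step bound, without any canonical-choice argument.
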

\pf A tree-child network with $n$ leaves and $n-1-k$ reticulation nodes whose child of the root is free has $2k$ free edges of which $2k-2$ are not the edges from the child of the root to its children. By picking one of the latter two edges, one of the remaining $2k-2$ edges, inserting nodes and connecting the former edge to the latter, we obtain a tree-child network with $n$ leaves and $n-k$ reticulation nodes whose child of the root is again free. Conversely, every such network is obtained by this construction at most $2$ times. Thus,
\[
2(2k-2)\mathrm{F}_{n,n-1-k}\leq 2\mathrm{F}_{n,n-k}
\]
or
\[
\mathrm{F}_{n,n-1-k}\leq\frac{1}{2(k-1)}\mathrm{F}_{n,n-k}.
\]
Iterating this gives the claimed result.\qed

The final result we need is the following.
\begin{lmm}\label{free-2}
We have,
\[
\mathrm{F}_{n,n-2}=\mathcal{O}\left(\frac{\mathrm{TC}_{n,n-1}}{n^{2/3}}\right).
\]
\end{lmm}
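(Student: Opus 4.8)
The plan is to first peel off the free-node combinatorics with the tools already available, turning the statement into a sharp comparison of two reticulation counts, and then to obtain that comparison from the lattice-path/Airy analysis underlying Theorem~\ref{gen-tc-max-k}. I would begin from the decomposition $\mathrm{TC}_{n,n-2}=\mathrm{F}_{n,n-2}+\mathrm{NF}_{n,n-2}$ together with Lemma~\ref{non-free}. Its case $k=1$ gives $2\,\mathrm{TC}_{n,n-2}=\mathrm{NF}_{n,n-1}=\mathrm{TC}_{n,n-1}$ (every maximally reticulated network is non-free), and its case $k=2$ gives $\mathrm{NF}_{n,n-2}=4\,\mathrm{TC}_{n,n-3}$. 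Combining these yields the exact identity
\[
\mathrm{F}_{n,n-2}=\tfrac12\,\mathrm{TC}_{n,n-1}-4\,\mathrm{TC}_{n,n-3}.
\]
Since $\mathrm{F}_{n,n-2}\ge 0$ we automatically have $\mathrm{TC}_{n,n-3}\le\tfrac18\mathrm{TC}_{n,n-1}$, so the claim is equivalent to the matching lower bound $\mathrm{TC}_{n,n-3}\ge\tfrac18\mathrm{TC}_{n,n-1}\bigl(1-\mathcal O(n^{-2/3})\bigr)$. I would emphasize that this must be proved independently of the limit law itself: Lemma~\ref{free-1} reduces all of Theorem~\ref{ll-gen-tc}~(i) to $\mathrm{F}_{n,n-2}$, so invoking the Poisson asymptotics here would be circular. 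Note also that the crude bounds of Lemma~\ref{ub-lb} give only the constant $\tfrac{1}{3^2 2!}=\tfrac{1}{18}<\tfrac18$, so a genuine sharpening is needed.

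Using Theorem~\ref{formula-TC} to write $\mathrm{TC}_{n,n-1}=n!\,c_{n-1,n-1}$ and $\mathrm{TC}_{n,n-3}=\tfrac{n!}{4}c_{n-1,n-3}$, the target becomes the refined ratio estimate
\[
\frac{c_{n-1,n-3}}{c_{n-1,n-1}}=\tfrac12-\mathcal O(n^{-2/3}).
\]
Both word counts are accessible through the numbers $b^{(2)}_{n,k,m}$ of Proposition~\ref{rec-cnk}: the sum $c_{n,n}=\sum_m b^{(2)}_{n,n,m}$ was analysed for Theorem~\ref{gen-tc-max-k}, and $c_{n,n-2}=\sum_m b^{(2)}_{n,n-2,m}$ obeys the recurrence~\eqref{rec-bnkm} with the index $k$ lowered by two. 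After the substitutions of Lemmas~\ref{lem:cn-bnm} and~\ref{mod-rec}, both are governed by the lattice-path recurrence~\eqref{eq:recdnm}, whose profile is, by the heuristic leading to~\eqref{eq:fansatz}, asymptotically proportional to $\Ai\bigl(a_1+B^{1/3}(m+1)/n^{1/3}\bigr)$ with $B=2(d-1)/(d+1)$, concentrated on the $\Theta(n^{1/3})$-wide boundary layer near $m=0$ that carries the dominant mass. Lowering $k$ by two perturbs the weights $\mu^{(d)}_{n,m},\nu^{(d)}_{n,m}$ of Lemma~\ref{mod-rec} only through additive $\mathcal O(1/n)$ shifts, and I would track how these shifts modify the Airy profile; since the leading constant $\tfrac12$ already comes from the unperturbed profile, the residual is a boundary-layer correction of relative size $\mathcal O(n^{-2/3})$.

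Concretely, I would run the super-/sub-solution sandwich of Propositions~\ref{lem:AiryXLower} and~\ref{lem:AiryXUpper} for the $k=n-2$ weights, together with the lattice-path monotonicity $p_{\ell,j,2n}/(j+1)^2\ge p_{\ell,k,2n}/(k+1)^2$ from~\eqref{eq:plj2ninequ}, to bound $c_{n-1,n-3}$ from below by $\tfrac12 c_{n-1,n-1}$ minus an explicit error controlled across the three regimes of $m$ employed in the proof of Theorem~\ref{gen-tc-max-k}. The main obstacle is precisely this step: one needs a two-term (not merely $\Theta$-) asymptotic, i.e.\ the Airy sandwich pushed one order further and the error integrated uniformly in $m$. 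The combinatorial reduction of the first paragraph is routine, but isolating the single power $n^{-2/3}$ is delicate because it is an artefact of the vanishing of $\Ai$ at its top zero $a_1$ and of the $n^{1/3}$ width of the boundary layer; it therefore genuinely requires the stretched-exponential machinery rather than an elementary bijection.
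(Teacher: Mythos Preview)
Your combinatorial reduction is correct and elegant: the identity $\mathrm{F}_{n,n-2}=\tfrac12\,\mathrm{TC}_{n,n-1}-4\,\mathrm{TC}_{n,n-3}$ follows exactly as you say from Lemma~\ref{non-free} at $k=1,2$. But the analytic step you arrive at is a genuine gap. You have transformed the problem into showing $c_{n-1,n-3}/c_{n-1,n-1}=\tfrac12-\mathcal O(n^{-2/3})$, i.e.\ that the \emph{difference} of two quantities of the same order is $\mathcal O(n^{-2/3})$ times either. As the paper states explicitly (Remark after Corollary~\ref{cor-2}), the Elvey Price--Fang--Wallner machinery yields only $\Theta$-results; the multiplicative constants in the sandwich of Propositions~\ref{lem:AiryXLower}--\ref{lem:AiryXUpper} are uncontrolled. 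Pushing the Airy sandwich one order further to capture a second term is not a refinement of the existing argument but an open problem. There is a second technical issue: the recurrence for $b^{(2)}_{n,n-2,m}$ is not merely a perturbation of the weights $\mu^{(d)}_{n,m},\nu^{(d)}_{n,m}$ in Lemma~\ref{mod-rec}. That lemma treats only the pure case $k=n$; for $k=n-2$ the recurrence~\eqref{rec-bnkm} has \emph{both} branches present (the last letter may occur twice or thrice), so the reduction to the clean two-term lattice-path recurrence~\eqref{eq:recdnm} does not go through as stated.

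The paper sidesteps all of this by attacking $\mathrm{F}_{n,n-2}$ directly rather than as a difference. Via the encoding of Theorem~\ref{formula-TC}, a network counted by $\mathrm{F}_{n,n-2}$ corresponds to a word in $\mathcal C_{n-1,n-2}^{(2)}$ that \emph{starts with} the unique twice-occurring letter; this gives $\mathrm{F}_{n,n-2}=\tfrac{n!}{2}g_{n-1}$ where $g_n$ counts such words. Because the twice-letter is pinned to the front, the resulting recurrence has the same shape as~\eqref{rec-bnm} but with a coefficient one degree lower in $m$, and a \emph{one-sided upper bound} via the same Airy machinery already suffices. The payoff is an extra $n^{-2/3}$ in the polynomial correction compared to $\mathrm{TC}_{n,n-1}$, giving the lemma immediately. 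The contrast is instructive: your route needs a two-sided second-order estimate on an unconstrained word class, whereas the paper needs only a first-order upper bound on a strictly smaller word class whose constraint is precisely what produces the $n^{-2/3}$ gap.
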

\begin{proof}
Let $N$ be a network with $n$ leaves and $n-2$ reticulation nodes whose child of the root is free. Note that the two words constructed from $N$ in the proof of Theorem~\ref{formula-TC} both start with $a$ and that this is the sole letter which occurs only twice. Conversely, all words with this property arise from networks with $n$ leaves and $n-2$ reticulation nodes whose child of the root is free. Thus, with the same arguments as in the proof of Theorem~\ref{formula-TC}, we have
\[
\mathrm{F}_{n,n-2}=\frac{n!}{2}g_{n-1},
\]
where $g_{n-1}$ is the number of words in $\mathcal{C}_{n-1,n-2}$ which start with $a$ and this is the sole letter which occurs twice. Next, with the same arguments as used in the proof of Proposition~\ref{rec-cnk}:
\[
g_n=\sum_{m\geq 1}h_{n,m}, \qquad\text{where}\qquad h_{n,m}=(n+m+n-4)\sum_{j=1}^{m}h_{n-1,j}.
\]
We now apply to this sequence the same method as in the last section, where we only need an upper bound. This gives
\[
g_n=\mathcal{O}\left(n!12^n e^{a_1(3n)^{1/3}}n^{-4/3}\right)
\]
and thus,
\[
\mathrm{F}_{n,n-2}=\mathcal{O}\left((n!)^2 12^n e^{a_1(3n)^{1/3}}n^{-7/3}\right).
\]
Comparing with the Theta-result for $\mathrm{TC}_{n,n-1}$ from Theorem~\ref{gen-tc-max-k} (which was also the main result of \cite{FuYuZh}) gives the claimed result.
\end{proof}

Now, we can prove the following proposition.

\begin{pro}
We have,
\[
\mathrm{TC}_{n,n-1-k}=\frac{1}{2^kk!}(1+o(1))\mathrm{TC}_{n,n-1}
\]
uniformly for $0\leq k\leq c\log n$ where $c=1/(3\log(3/2))$.
\end{pro}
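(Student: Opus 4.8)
The plan is to exploit the exact relation of Lemma~\ref{non-free}, $\mathrm{NF}_{n,n-j}=(2j)\mathrm{TC}_{n,n-1-j}$, together with the trivial decomposition $\mathrm{TC}_{n,n-j}=\mathrm{F}_{n,n-j}+\mathrm{NF}_{n,n-j}$, to obtain the exact one-step identity
\[
\mathrm{TC}_{n,n-1-j}=\frac{1}{2j}\,\mathrm{TC}_{n,n-j}\left(1-\frac{\mathrm{F}_{n,n-j}}{\mathrm{TC}_{n,n-j}}\right),\qquad (1\le j\le n-1).
\]
If the correction factor is $1-o(1)$ uniformly in $j$, then iterating this identity from $j=k$ down to $j=1$ telescopes to the claimed constant $1/(2^kk!)$; this is also consistent with the crude upper bound $\mathrm{TC}_{n,n-1-j}\le\frac{1}{2j}\mathrm{TC}_{n,n-j}$ of~\eqref{ub-lb}. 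So the whole proof reduces to controlling the ratio $\mathrm{F}_{n,n-j}/\mathrm{TC}_{n,n-j}$.

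The heart of the argument is therefore to show that $\mathrm{F}_{n,n-j}=o(\mathrm{TC}_{n,n-j})$ uniformly for $1\le j\le c\log n$. For the numerator I would bound the free networks from above by combining Lemma~\ref{free-1}, which reduces $\mathrm{F}_{n,n-j}$ to $\mathrm{F}_{n,n-2}$ at the cost of a factor $1/(2^{j-2}(j-2)!)$, with Lemma~\ref{free-2}, which gives $\mathrm{F}_{n,n-2}=\mathcal{O}(\mathrm{TC}_{n,n-1}/n^{2/3})$; the case $j=1$ is trivial since $\mathrm{F}_{n,n-1}=0$ in a maximally reticulated network. For the denominator I would bound $\mathrm{TC}_{n,n-j}$ from below by $\frac{1}{3^{j-1}(j-1)!}(1+o(1))\mathrm{TC}_{n,n-1}$, which is exactly the lower bound of Lemma~\ref{ub-lb-2}. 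Taking the quotient, the factorials essentially cancel and one is left with
\[
\frac{\mathrm{F}_{n,n-j}}{\mathrm{TC}_{n,n-j}}=\mathcal{O}\!\left(j\left(\tfrac{3}{2}\right)^{j}n^{-2/3}\right).
\]

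This is precisely where the constant $c=1/(3\log(3/2))$ enters, and I expect this balancing to be the main obstacle: for $j\le c\log n$ we have $(3/2)^{j}\le n^{1/3}$, so the ratio is $\mathcal{O}((\log n)\,n^{-1/3})=o(1)$, whereas any essentially larger range of $j$ would let the geometric growth $(3/2)^j$ overwhelm the $n^{-2/3}$ gain from Lemma~\ref{free-2} and the free networks could dominate. Feeding $\delta_j:=\mathrm{F}_{n,n-j}/\mathrm{TC}_{n,n-j}=\mathcal{O}((\log n)\,n^{-1/3})$ back into the exact identity and telescoping gives
\[
\mathrm{TC}_{n,n-1-k}=\frac{1}{2^kk!}\,\mathrm{TC}_{n,n-1}\prod_{j=1}^{k}(1-\delta_j).
\]
Since each $\delta_j$ decays polynomially while there are only $k=\mathcal{O}(\log n)$ factors, $\prod_{j=1}^k(1-\delta_j)=1+\mathcal{O}((\log n)^2 n^{-1/3})=1+o(1)$, which yields the stated asymptotic equality. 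The only remaining point requiring care is that all the $o(1)$ and $\mathcal{O}(\cdot)$ estimates borrowed from Lemmas~\ref{ub-lb-2},~\ref{free-1}, and~\ref{free-2} are uniform over the full range $1\le j\le k\le c\log n$, which holds because $c\log n=o(\sqrt n)$, the range in which those lemmas were established.
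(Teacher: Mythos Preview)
Your proposal is correct and follows essentially the same route as the paper: the paper also rewrites $\mathrm{TC}_{n,n-k}=\mathrm{NF}_{n,n-k}+\mathrm{F}_{n,n-k}=(2k)\mathrm{TC}_{n,n-1-k}+\mathrm{F}_{n,n-k}$ via Lemma~\ref{non-free}, bounds $\mathrm{F}_{n,n-k}$ by combining Lemmas~\ref{free-1} and~\ref{free-2} with the lower bound of Lemma~\ref{ub-lb-2} to obtain $\mathrm{F}_{n,n-k}=\mathcal{O}\big((3/2)^k k\,n^{-2/3}\,\mathrm{TC}_{n,n-k}\big)$, and then iterates the resulting one-step relation $\mathrm{TC}_{n,n-1-k}=\frac{1}{2k}\big(1+\mathcal{O}((\log n)n^{-1/3})\big)\mathrm{TC}_{n,n-k}$ over $k\le c\log n$. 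Your explicit product $\prod_{j=1}^{k}(1-\delta_j)$ is just a cosmetic variant of the paper's $(1+\mathcal{O}((\log n)n^{-1/3}))^k$, and your identification of the constant $c$ as the threshold where $(3/2)^j$ balances $n^{-2/3}$ matches the paper's reasoning exactly.
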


\pf First note that
\begin{equation}\label{F-and-NF}
\mathrm{TC}_{n,n-k}=\mathrm{NF}_{n,n-k}+\mathrm{F}_{n,n-k}=(2k) \mathrm{TC}_{n,n-1-k}+\mathrm{F}_{n,n-k},
\end{equation}
where we used Lemma~\ref{non-free}.

Next, by using Lemma~\ref{free-1}, Lemma~\ref{free-2} and the lower bound in Lemma~\ref{ub-lb-2}, we obtain that
\[
\mathrm{F}_{n,n-k}={\mathcal O}\left(\frac{\mathrm{F}_{n,n-2}}{2^k(k-2)!}\right)={\mathcal O}\left(\frac{\mathrm{TC}_{n,n-1}}{2^k(k-2)!n^{2/3}}\right)={\mathcal O}\left(\left(\frac{3}{2}\right)^k\frac{k}{n^{2/3}}\times\mathrm{TC}_{n,n-k}\right)
\]
for $1\leq k\leq n^{1/4}$ (which is within the range of applicability of Lemma~\ref{ub-lb-2}). Thus, for $1\leq k\leq c\log n$,
\[
\mathrm{F}_{n,n-k}={\mathcal O}\left(\frac{\log n}{n^{1/3}}\times\mathrm{TC}_{n,n-k}\right).
\]
Plugging this into (\ref{F-and-NF}), we obtain
\[
\mathrm{TC}_{n,n-1-k}=\frac{1}{2k}\left(1+{\mathcal O}\left(\frac{\log n}{n^{1/3}}\right)\right)\mathrm{TC}_{n,n-k}
\]
and by iteration
\[
\mathrm{TC}_{n,n-1-k}=\frac{1}{2^kk!}\left(1+{\mathcal O}\left(\frac{\log n}{n^{1/3}}\right)\right)^k\mathrm{TC}_{n,n-1}
\]
from which the result follows since
\[
\left(1+{\mathcal O}\left(\frac{\log n}{n^{1/3}}\right)\right)^k=1+{\mathcal O}\left(\frac{\log^2 n}{n^{1/3}}\right)=1+o(1).
\]
This completes the proof.\qed

\begin{rem}
Note that the last result improves the bounds of Lemma~\ref{ub-lb-2}, but for a smaller range of~$k$. (The value of $c$ in the proposition is not best possible; however, it is sufficient for our purpose.)
\end{rem}

\begin{proof}[Proof of Theorem~\ref{ll-gen-tc}]
We first consider the number of tree-child networks with $n$ leaves. Recall that
\[
\mathrm{TC}_{n}=\sum_{k=0}^{n-1}\mathrm{TC}_{n,n-1-k}.
\]
Let $k^{*}=c\log n$ with $c$ from the last proposition. Then,
\[
\mathrm{TC}_{n}=\sum_{k\leq k^{*}}\mathrm{TC}_{n,n-1-k}+\sum_{k^{*}<k\leq n-1}\mathrm{TC}_{n,n-1-k}.
\]
Using the upper bound in (\ref{ub-lb-2}), we obtain
\[
\sum_{k^{*}<k\leq n-1}\mathrm{TC}_{n,n-1-k}\leq\mathrm{TC}_{n,n-1}\sum_{k>k^{*}}\frac{1}{2^kk!}=o(\mathrm{TC}_{n,n-1}).
\]
On the other hand, by the last proposition:
\begin{align*}
\sum_{k\leq k^{*}}\mathrm{TC}_{n,n-1-k}&=(1+o(1))\mathrm{TC}_{n,n-1}\sum_{k\leq k^{*}}\frac{1}{2^kk!}\\ &=(1+o(1))\mathrm{TC}_{n,n-1}\sum_{k=0}^{\infty}\frac{1}{2^kk!}\\
&=(1+o(1))\mathrm{TC}_{n,n-1}e^{1/2}.
\end{align*}
Combining the last two displays gives
\begin{equation}\label{asymp-TCn-TCnn-1}
\mathrm{TC}_{n}\sim e^{1/2}\mathrm{TC}_{n,n-1},\qquad (n\rightarrow\infty).
\end{equation}
Thus, for all fixed $k$:
\[
{\mathbb P}(n-1-R_n=k)=\frac{\mathrm{TC}_{n,n-1-k}}{\mathrm{TC}_{n}}\longrightarrow\frac{e^{-1/2}}{2^kk!},\qquad (n\rightarrow\infty)
\]
which proves the claimed Poisson limit law.
\end{proof}

\begin{rem}\label{d=2-conv-mom}
With the same arguments as used to prove (\ref{asymp-TCn-TCnn-1}), we can also show that
\[
\sum_{k=0}^{n-1}k\mathrm{TC}_{n-1-k}\sim \frac{1}{2}e^{1/2}\mathrm{TC}_{n,n-1}
\]
and thus ${\mathbb E}(n-1-R_n)\sim 1/2$. Moreover, in a similar way, higher moments of $n-1-R_n$ can be shown as well to converge to those of $\mathrm{Poisson}(1/2)$.
\end{rem}

\subsubsection{\texorpdfstring{$d$-combining Networks with $d\geq 3$}{d-combining Networks with d>=3}}\label{ll-d-larger-2}

Here, we prove Theorem~\ref{ll-gen-tc} for $d\geq 3$. For the sake of simplicity, we restrict ourselves to the case $d=3$; the case of larger $d$ follows along similar lines.%

Recall the class of words $\mathcal{C}_n^{(d)}$ from Definition~\ref{def-words}. In addition, we denote by  $\mathcal{TC}_{n,k}^{(d)}$ the set of tree-child networks with $n$ leaves and $k$ reticulation nodes. Note that in Proposition~\ref{formula-TC}, we constructed a bijection $f$ from

\[
f:\ \underbrace{\{0,1\}^{n-k-1}\times\mathcal{TC}_{n,k}^{(d)}}_{\displaystyle{=:2^{n-k-1}\times\mathcal{TC}_{n,k}^{(d)}}}\longmapsto\mathcal{C}_{n-1,k}^{(d)}\times\mathcal{S}_n,
\]

\noindent where $\mathcal{S}_n$ denotes the symmetric group of order $n$.

In particular, this bijection implies that $\mathcal{TC}_{n,n-1}^{(d)}$ is in bijection with $\mathcal{C}^{(d)}_{n-1,n-1}\times\mathcal{S}_n$ for $d=2,3$ ($k=n-1$) and $2\times\mathcal{TC}_{n,n-2}^{(d)}$ is in bijection with $\mathcal{C}^{(d)}_{n-1,n-2}\times\mathcal{S}_n$ for $d=2,3$ ($k=n-2$); see the upper half and lower half of Figure~\ref{process8}. Also, from Remark~\ref{eq-max-ret}, we have a bijection from $\mathcal{TC}_{n,n-1}^{(2)}$ to $2\times\mathcal{TC}_{n,n-2}^{(2)}$ (see the middle arrow in the left half of Figure~\ref{process8}) which gives a $2$-to-$1$ map from $\mathcal{TC}_{n,n-1}^{(2)}$ to $\mathcal{TC}_{n,n-2}^{(2)}$ (take a maximally reticulated network and remove the reticulation node associated with the non-free edge of the child of the root, the non-free edge and its initial node; see Remark~\ref{eq-max-ret}) followed by a $1$-to-$2$ map from $\mathcal{TC}_{n,n-2}^{(2)}$ to $2\times\mathcal{TC}_{n,n-2}^{(2)}$ (by picking the newly created free edge which contained the reticulation node in the previous construction).

Using this, we can now prove the following result.

\begin{figure}[!t]
    \centering
    \includegraphics[scale=1]{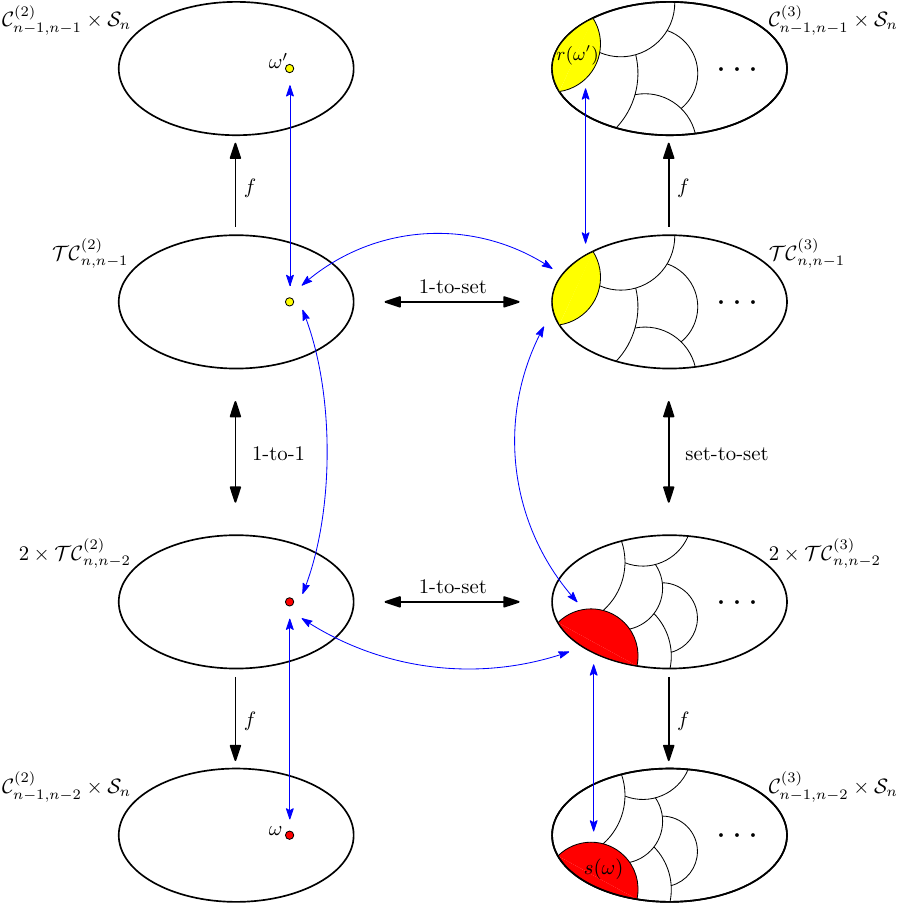}
    \caption{The constructions and maps which are used in the proof of Lemma~\ref{main-lemma}.}\label{process8}
\end{figure}

\begin{lmm}\label{main-lemma}
We have,
\[
\mathrm{TC}_{n,n-2}^{(3)}=o(\mathrm{TC}_{n,n-1}^{(3)}).
\]
\end{lmm}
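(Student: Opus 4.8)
The plan is to move the statement into the language of the word classes $\mathcal{C}^{(3)}_{n-1,k}$ and then to compare the two counts asymptotically. By Theorem~\ref{formula-TC} (the relevant bijections being exactly the ones drawn in Figure~\ref{process8}) we have $\mathrm{TC}^{(3)}_{n,n-1}=n!\,c^{(3)}_{n-1,n-1}$ and $2\,\mathrm{TC}^{(3)}_{n,n-2}=n!\,c^{(3)}_{n-1,n-2}$, so that, writing $N=n-1$, the claim is equivalent to $c^{(3)}_{N,N-1}=o(c^{(3)}_{N,N})$. Both word classes use the same number $N$ of letters and hence share the same factorial and exponential growth; the assertion is therefore genuinely a statement about the sub-exponential correction factors, exactly as in the bicombining analogue Lemma~\ref{free-2}. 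Note that the crude bound $\mathrm{TC}^{(3)}_{n,n-2}\le\tfrac12\mathrm{TC}^{(3)}_{n,n-1}$ from Lemma~\ref{main-obs-TC} only gives a constant factor and must be sharpened to a vanishing one.

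First I would split $c^{(3)}_{N,N-1}=\sum_m b^{(3)}_{N,N-1,m}$ using Proposition~\ref{rec-cnk} with $k=N-1$, according to whether the largest letter $\omega_N$ occurs twice (Case~A) or $d+1=4$ times (Case~B). In Case~A one deletes $\omega_N$ and is left with an all-quadruple word on $N-1$ letters, so this part is at most $\sum_{m=1}^{N}\sum_{j=1}^{m}b^{(3)}_{N-1,N-1,j}\le N\,c^{(3)}_{N-1,N-1}$. Since the Theta-result of Theorem~\ref{gen-tc-max-k} yields $c^{(3)}_{N-1,N-1}/c^{(3)}_{N,N}=\Theta(N^{-2})$ (the $N^{-2}$ coming from the squared factorial), Case~A is already $O(c^{(3)}_{N,N}/N)=o(c^{(3)}_{N,N})$. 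Case~B is the essential part: there $\omega_N$ is quadruple and the unique double letter lies among $\omega_1,\dots,\omega_{N-1}$, and by Proposition~\ref{rec-cnk} it satisfies the \emph{same} recurrence as the pure sequence $b^{(3)}_{N,m}=\binom{3N+m-2}{2}\sum_{j\le m}b^{(3)}_{N-1,j}$ from \eqref{rec-bnm}, except that the binomial weight is now the shifted $\binom{3N+m-4}{2}$ and the driving sequence is the one-double sequence at level $N-1$.

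The main step is then to run the Airy-type analysis of Section~\ref{asymp-count} (Lemma~\ref{mod-rec} and Propositions~\ref{lem:AiryXLower} and~\ref{lem:AiryXUpper}) on this shifted recurrence, treating the Case~A contribution as a lower-order inhomogeneity to be absorbed into the sub- and super-solutions. Because $\binom{3N+m-4}{2}$ and $\binom{3N+m-2}{2}$ agree to leading order and differ only at relative order $1-O(1/N)$, the transformation of Lemma~\ref{mod-rec} produces the same constant $\gamma(3)=32$ and the same Airy scale, hence the same stretched-exponential factor $e^{3a_1\beta(3)N^{1/3}}$; only the $n^{-1}$ terms in $\tilde{s}_n,\hat{s}_n$ are affected, and these are exactly the terms that fix the polynomial exponent. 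I would show that the shift strictly lowers this exponent, giving $c^{(3)}_{N,N-1}=O(N^{-\delta}c^{(3)}_{N,N})$ for some $\delta>0$, which is the assertion. This mirrors the bicombining computation of Lemma~\ref{free-2}, where the analogous shift by $-2$ in the weight produced the relative saving $n^{-2/3}$.

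The hard part will be making this last step fully rigorous, and in particular ruling out that the Case~A inhomogeneity props the ratio up to a positive constant: a naive recursive comparison of the form $b^{(3)}_{N-1,N-2,j}\le\rho_{N-1}\,b^{(3)}_{N-1,N-1,j}$ only yields a recursion $\rho_N\le\rho_{N-1}(1-c/N)+C/N$, whose fixed point is a nonzero constant and is therefore insufficient. One genuinely needs the precise polynomial exponent coming out of the Airy analysis, i.e.\ the sub/super-solutions $\tilde{X}_{n,m},\hat{X}_{n,m}$ adapted to the shifted weight together with the lattice-path domination estimate \eqref{eq:plj2ninequ}. As this is the same machinery already developed for Theorem~\ref{gen-tc-max-k} (now applied to the one-defect class), I would carry out the coefficient bookkeeping with computer algebra, as was done for Propositions~\ref{lem:AiryXLower} and~\ref{lem:AiryXUpper}, and only indicate the changes caused by the shift.
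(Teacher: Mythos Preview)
Your route is genuinely different from the paper's, and it is worth knowing both.

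The paper does \emph{not} re-run any Airy machinery for $d=3$. Instead it leverages the known bicombining identity $\mathrm{TC}^{(2)}_{n,n-1}=2\,\mathrm{TC}^{(2)}_{n,n-2}$ (Remark~\ref{eq-max-ret}) combinatorially. Every word $\omega'\in\mathcal{C}^{(2)}_{n-1,n-1}$ corresponds to a word $\omega\in\mathcal{C}^{(2)}_{n-1,n-2}$ by deleting the first letter. Each $\omega'$ blows up to a \emph{class} of $r(\omega')$ words in $\mathcal{C}^{(3)}_{n-1,n-1}$ (by inserting one extra copy of each letter before its current first occurrence), and similarly $\omega$ blows up to $s(\omega)$ words in $\mathcal{C}^{(3)}_{n-1,n-2}$; these classes partition the two $d=3$ word sets. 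One then computes directly
\[
\frac{r(\omega')}{s(\omega)}=\frac{k_1k_2\cdots k_n}{(k_2-2)\cdots(k_n-2)}\ \ge\ \prod_{i=2}^{n}\frac{4i-3}{4i-5}\ =\ \Theta(n^{1/2}),
\]
where $k_i$ is the position of the $i$th ``first parent'' in $\omega'$. Since this bound is uniform over all $\omega$, the lemma follows immediately with the explicit saving $O(n^{-1/2})$. No asymptotic analysis, no inhomogeneities, no computer algebra.

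Your plan is sound in outline: the translation to $c^{(3)}_{N,N-1}=o(c^{(3)}_{N,N})$ is correct, your Case~A bound $O(c^{(3)}_{N,N}/N)$ is correct, and your identification of the shifted weight $\binom{3N+m-4}{2}$ in Case~B is correct. But the heart of the argument---that the inhomogeneous shifted recurrence has a strictly smaller polynomial exponent---is left as a promissory note (``I would carry out the coefficient bookkeeping with computer algebra''). You yourself diagnose the obstacle precisely: the Case~A source term feeds back into Case~B at every level, and a term-by-term ratio comparison collapses to a recursion whose fixed point is a nonzero constant. A heuristic $\rho_n\approx(1-4/(3n))\rho_{n-1}+c/n^2$ does suggest $\rho_n=O(1/n)$, so the approach is likely salvageable, but turning this into an honest upper bound means redoing Propositions~\ref{lem:AiryXLower}--\ref{lem:AiryXUpper} for an \emph{inhomogeneous} recurrence, which is more than a bookkeeping change. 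Until that is actually done, the proof has a gap at its decisive step. The paper's argument sidesteps the whole issue with a four-line elementary estimate.
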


\begin{proof}
We first explain a construction of $\mathcal{TC}_{n,n-1}^{(3)}$ from $\mathcal{TC}_{n,n-1}^{(2)}$:  for a network from $\mathcal{TC}_{n,n-1}^{(2)}$, we add a new parent (and corresponding edge) for each reticulation node to an edge on the path-components we pass before we read the first parent of the reticulation node in the construction of the word and permutation from the proof of Theorem~\ref{formula-TC}, where the reticulation nodes are processed consecutively (in any order).  Depending on the choice of the edges, we get several networks in $\mathcal{TC}_{n,n-1}^{(3)}$ which all have essentially the same path-component structure as the network from $\mathcal{TC}_{n,n-1}^{(2)}$ (with respect to the encoding from Section~\ref{enc-words}). Conversely, removing the first parent (and corresponding edge) of each reticulation node of a network in $\mathcal{TC}_{n,n-1}^{(3)}$ gives a network in $\mathcal{TC}_{n,n-1}^{(2)}$. Thus, this gives a bijection between networks in $\mathcal{TC}_{n,n-1}^{(2)}$ and classes of networks from $\mathcal{TC}_{n,n-1}^{(3)}$, where these classes form a partition of $\mathcal{TC}_{n,n-1}^{(3)}$; see the second row in the top half of Figure~\ref{process8}.

Equivalently, when viewing networks as words and permutations (where the permutation however is here irrelevant because it does not change in the construction), any word in $\mathcal{C}_{n-1,n-1}^{(3)}$ can be obtained by adding a new $\omega_{i}$  at any position before the first occurrence of $\omega_{i}$ in a word from $\mathcal{C}_{n-1,n-1}^{(2)}$, where $\omega_i$ runs through all letters.

 For example: $baaabb$ (a word in $\mathcal{C}_{2,2}^{(2)}$) leads to the class $\{bbaaaabb, babaaabb, abbaaabb\}$ (words in $\mathcal{C}_{2,2}^{(3)}$); see Figure~\ref{drawex} for the corresponding networks.

 \begin{figure}[!t]
    \centering
    \includegraphics[scale=1.05]{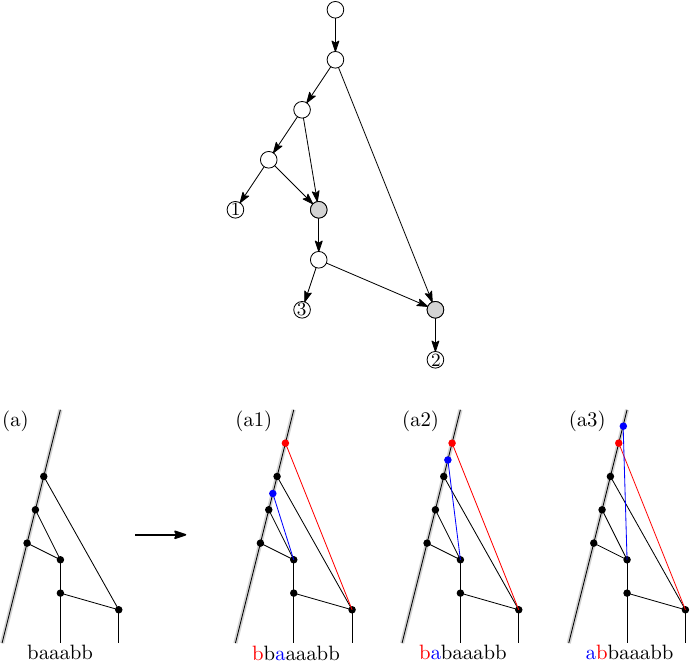}
    \vspace*{0.15cm}
    \caption{The construction of $\mathcal{TC}_{n,n-1}^{(3)}$ from $\mathcal{TC}_{n,n-1}^{(2)}$. Top: a network from $\mathcal{TC}_{n,n-1}^{(2)}$. Bottom: the three networks from $\mathcal{TC}_{n,n-1}^{(3)}$ constructed from the top network; the corresponding words are below each network. (The permutation in each case is $132$; that is why we refrained from indicating it.)}\label{drawex}
\end{figure}

In the next step, we construct $2\times\mathcal{TC}_{n,n-2}^{(3)}$ from $2\times\mathcal{TC}_{n,n-2}^{(2)}$ in a similar way (where we turn networks into maximally reticulated networks as in the proof of Theorem~\ref{formula-TC}). In particular, since every network in $2\times\mathcal{TC}_{n,n-2}^{(2)}$ corresponds to a word from $\mathcal{C}_{n-1,n-2}^{(2)}$ and a permutation (which is however again irrelevant), we apply to the words from $\mathcal{C}_{n-1,n-2}^{(2)}$ the same construction as above with the only difference that we only use the $\omega_i$'s which are repeated $3$ times. Like this, we obtain all words from $\mathcal{C}_{n-1,n-2}^{(3)}$.

For example, take $abbab+132$ and $aabbb+312$ (which encode the same network in $\mathcal{TC}_{3,1}$); $abbab$ leads to the words $\{babbab, abbbab\}$ from $\mathcal{C}_{2,1}^{(3)}$ and $aabbb$ leads to the words $\{baabbbb, ababbb, aabbbb\}$ from $\mathcal{C}_{2,1}^{(2)}$; see Figure~\ref{2 to 3} for a plot of the corresponding networks from $\mathcal{TC}_{3,1}^{(3)}$.

\begin{figure}[!t]
    \centering
    \includegraphics[scale=1.15]{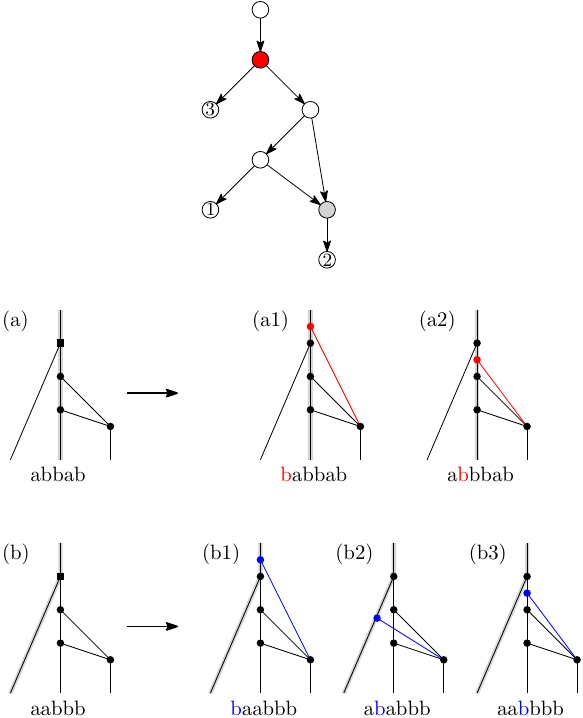}
    \caption{The construction of $\mathcal{TC}_{n,n-2}^{(3)}$ from $\mathcal{TC}_{n,n-2}^{(2)}$. Top: a network from $\mathcal{TC}_{n,n-2}^{(2)}$. Bottom left: the two corresponding networks from $2\times\mathcal{TC}_{n,n-2}^{(2)}$ where the first path-component (using the indexing from the proof of Theorem~\ref{formula-TC}) is in bold. Bottom right: the networks from $\mathcal{TC}_{n,n-2}^{(3)}$ constructed from each of the two networks; the corresponding words are below the networks.}\label{2 to 3}
\end{figure}

Next, as mentioned in the paragraph before the lemma, there is a bijection between $\mathcal{TC}_{n,n-1}^{(2)}$ and $2\times\mathcal{TC}_{n,n-2}^{(2)}$. This bijection gives rise to a bijection between $\mathcal{C}_{n-1,n-1}^{(2)}\times\mathcal{S}_n$ and $\mathcal{C}_{n-1,n-2}^{(2)}\times\mathcal{S}_n$ (which just removes the first letter from a word $\omega'$ of the former to obtain a word $\omega$ of the latter); see left half of Figure~\ref{process8}. (The permutation remains unchanged.) Note that $\omega'$ is bijectively mapped onto a set of words from $\mathcal{C}_{n,n-1}^{(3)}$ and $\omega$ onto a set of words from $\mathcal{C}_{n,n-2}^{(2)}$; see the top and bottom half of Figure~\ref{process8}. Denote the cardinality of these sets by $r(\omega')$ and $s(\omega)$, respectively. Then, to show that $\mathrm{TC}_{n,n-2}^{(3)}=o(\mathrm{TC}_{n,n-1}^{(3)})$, it suffices to show that $s(\omega)=o(r(\omega'))$ uniformly over all $\omega$ in $\mathcal{C}_{n,n-2}^{(2)}$, or equivalently, we have to find a uniform lower bound of the ratio $r(\omega')/s(\omega)$ that tends to infinity. We consider this ratio next.

For example, if $\omega=abbccabc$, then $\omega'=aabbccabc$ and the ratio becomes
\[
\frac{r(aabbccabc)}{s(abbccabc)}=\frac{1\cdot 4\cdot 7}{2\cdot 5}.
\]
More generally,
\[ \frac{r(\omega')}{s(\omega)}=\frac{k_{1}\cdot k_{2}\cdots k_{n}}{(k_{2}-2)\cdots(k_{n}-2)},\]
where $i$ denotes the $i$th  first parent of the letters in $\omega'$ and $k_{i}$ indicates the number of  possibilities of adding an additional parent for $i$ by the above method. Note that $k_1=1$ and the $k_{i}$'s increase, thus, the $k_i/(k_{i}-2)$'s decrease. Moreover, note that for each $k_{i}$, we have $k_{i}\leq4(i-1)+1$ since the upper bound is the extremal case, i.e., the case that each of the previous $i-1$ letters occur $4$ times. Consequently,
\begin{align*}
\frac{r(\omega')}{s(\omega)}=\frac{k_{2}\cdots k_{n}}{(k_{2}-2)\cdots(k_{n}-2)}\geq \frac{5\cdot9\cdots(4n-3)}{3\cdot7\ldots(4n-5)}=
\Theta\left(\frac{\Gamma(n+\frac{1}{4})}{ \Gamma(n-\frac{1}{4})}\right)=\Theta(n^{1/2}),
\end{align*}
 where we used Stirling's formula for the gamma function in the last step. This implies that, $s(\omega)=o(r(\omega'))$ which (as explained above) in turn implies that $\mathrm{TC}_{n,n-2}^{(3)}=o(\mathrm{TC}_{n,n-1}^{(3)})$. This is the claimed result.
\end{proof}

We can now prove the second case of Theorem~\ref{ll-gen-tc}.

\begin{proof}[Proof of Theorem~\ref{ll-gen-tc}-(ii)] First, observe that from (\ref{tc-number-ub}) and the previous lemma, we have
\begin{equation}\label{pf-step-1}
\mathrm{TC}_{n,n-1-k}^{(3)}=o(\mathrm{TC}_{n,n-1}^{(3)})
\end{equation}
for all fixed $k\geq 1$.

Next, by iterating (\ref{tc-number-ub}),
\[
\mathrm{TC}_{n,n-1-k}^{(3)}\leq\frac{1}{2^{k-1}k!}\mathrm{TC}_{n,n-2}^{(3)}
\]
for all $1\leq k\leq n-1$. Consequently,
\[
\sum_{k=2}^{n-1}\mathrm{TC}_{n,n-1-k}^{(3)}={\mathcal O}(\mathrm{TC}_{n,n-2}^{(3)})=o(\mathrm{TC}_{n,n-1}^{(3)}).
\]
Thus,
\begin{equation}\label{pf-step-2}
\mathrm{TC}_n^{(3)}=\mathrm{TC}_{n,n-1}^{(3)}+\mathrm{TC}_{n,n-2}^{(3)}+\sum_{k=2}^{n-1}\mathrm{TC}_{n,n-1-k}^{(3)}\sim \mathrm{TC}_{n,n-1}^{(3)}.
\end{equation}

Now, we can prove the claim:
\[
{\mathbb P}(n-1-T_n^{(3)}=k)={\mathbb P}(T_n^{(3)}=n-1-k)=\frac{\mathrm{TC}_{n,n-1-k}^{(3)}}{\mathrm{TC}_n^{(3)}}\longrightarrow\begin{cases} 1,&\text{if}\ k=0;\\ 0,&\text{if}\ k\geq 1,\end{cases}
\]
where the last step follows from (\ref{pf-step-1}) and (\ref{pf-step-2}).
\end{proof}

\begin{rem}\label{d>2-conv-mom}
Similar as in Remark~\ref{d=2-conv-mom}, we can prove that all moments of $n-1-T_n^{(3)}$ converge to $0$.
\end{rem}

\subsubsection{Proof of Corollary~\ref{cor-1}}
\label{sec:proofcor-1}

Note that
\[
{\mathbb E}(W_n^{(d)})=\sum_{\ell\geq 0}{\mathbb P}(W_n^{(d)}>\ell)\leq\sum_{\ell\geq 0}{\mathbb P}(n-1-T_n^{(d)}>\ell)={\mathbb E}(n-1-T_n^{(d)}).
\]
where the inequality follows from the fact that each twig is a free tree node and the number of free tree nodes is given by $n-1-k$ where $k$ is the number of reticulation nodes; see Lemma~\ref{free-nodes}. The result follows now from:
\[
{\mathbb E}(n-1-T_n^{(d)})\longrightarrow\begin{cases}1/2,&\text{if}\ d=2;\\ 0,&\text{if}\ d\geq 3;\end{cases}
\]
see Remark~\ref{d=2-conv-mom} and Remark~\ref{d>2-conv-mom}.

\section{Conclusion and Open Questions}\label{con}

The main purpose of this paper was to propose and investigate the class of {\it $d$-combining tree-child networks} which generalizes the class of bicombining tree-child networks. The latter class is one of the most important classes of phylogenetic networks; it has been widely studied in recent years. One of our major reasons for generalizing it was that a better understanding of its combinatorial properties was gained by placing it into a larger framework. We give a short summary of the results we obtained.

First, for one-component $d$-combining tree-child network, the availability of an easy counting formula (see Section~\ref{exact-count}) made it possible to derive asymptotic counting results and distributional results for the number of reticulation nodes of random one-component $d$-combining tree-child networks by standard methods (see Section~\ref{numb-ret-otc}). Moreover, we also investigated the Sackin index for this class (see Section~\ref{Sackin}) following \cite{CaZh} where the bicombining case was considered. However, whereas we derived convergence of all moments and limit law results for the number of reticulation nodes, we only proved a Theta-result for the mean of the Sackin index. It would be interesting to derive higher moments and prove a limit law result for the Sackin index, too. Moreover, it would be interesting to prove similar results also for other shape parameters such as the number of cherries.

For general $d$-combining tree-child networks, counting them turned out to be more involved. For the bicombining case, in \cite{PoBa} an encoding by words was proposed which led to a counting formula for this class of networks. However, this encoding was just conjectural. In Section~\ref{enc-words}, we proposed a slightly modified encoding which could be rigorously established (and extends from $d=2$ to $d\geq 3$). Our encoding again led to a formula for the number of networks providing a recursive way of computing these numbers for small values of $n,k,d$ (see Appendix~\ref{App-A}). In addition, we used our encoding to establish a Theta-result for the number of networks (see Section~\ref{asymp-count}) and again proved convergence of moments and a limit distribution result for the number of reticulation nodes (see Section~\ref{numb-ret-tc}). From the latter result, we also derived a preliminary result about the distribution of cherries. Proving more detailed stochastic results for this number as well as investigating the Sackin index (and other parameters) for general $d$-combining tree-child networks is still a challenge and we leave these questions open for further investigations. In addition, it would be also interesting to know whether the results from Appendix~\ref{App-B}, where results for fixed $k$ are discussed with a generalization of the method from \cite{CaZh} from the bicombining to the $d$-combining case, can be derived with our encoding from Section~\ref{enc-words}? Moreover, also the question whether the conjecture from \cite{PoBa} can be generalized to $d$-combining networks is open. (We might treat these questions elsewhere.)

A straightforward (and natural) further generalization of our class of $d$-combining networks would be to allow a finite set, say $\{d_1,\ldots,d_m\}$, of in-degrees for reticulation nodes. (Allowing an infinite set would make the counting problem meaningless.) Indeed, many of our exact results seem to generalize to this situation and asymptotic results should be doable as well. However, so far, we cannot see what can be gained from such a further generalization; the results of this paper seem to continue to hold and no new phenomena seem to arise. That is why we refrained from doing this here and unless such a generalization leads to considerable new mathematical challenges and/or different phenomena and/or additional insights about the (most important) bicombining case, this level of generality will only be explored in the PhD thesis of the first author.

\section{Acknowledgments}

We thank the reviewer for many useful suggestions. Also, we acknowledge financial support from: Y.-S.~Chang and M.~Fuchs were supported by National Science and Technology Council (NSTC) under the grant NSTC-111-2115-M-004-002-MY2; M.~Wallner was supported by the Austrian Science Fund (FWF): P~34142; G.-R.~Yu was supported by NSTC under the grant NSTC-110-2115-M-017-003-MY3.

{
\small

}

\appendix
\begin{landscape}
\section{Tables}\label{App-A}

\vspace*{0.6cm}
\begin{table}[!htb]
\centering
\begin{tabular}{c|cccccccc} \hline
\multicolumn{1}{c|}{$n\backslash{}k$} & \multicolumn{1}{c}{0} & \multicolumn{1}{c}{1} & \multicolumn{1}{c}{2} & \multicolumn{1}{c}{3} & \multicolumn{1}{c}{4} & \multicolumn{1}{c}{5} & \multicolumn{1}{c}{6} & \multicolumn{1}{c}{7} \\ \hline
2 & 1 & 2 \\
3 & 3 & 21 & 42 \\
4 & 15 & 228 & 1272 & 2544 \\
5 & 105 & 2805 & 30300 & 154500 & 309000 \\
6 & 945 & 39330 & 696600 & 6494400 & 31534200 & 63068400 \\
7 & 10395 & 623385 & 16418430 & 241204950 & 2068516800 & 9737380800 & 19474761600 \\
8 & 135135 & 11055240 & 405755280 & 8609378400 & 113376463200 & 920900131200 & 4242782275200 & 8485564550400 \\
 \hline
\end{tabular}
\caption{\label{tab:TCN_d2}$\mathrm{TC}^{(2)}_{n,k}$ for $2\leq n\leq 8$ and $0 \leq k < n$; see also \cite{CaZh}.}
\end{table}

\vspace*{0.6cm}
\begin{table}[!htb]
\centering
\begin{tabular}{c|ccccccc} \hline
\multicolumn{1}{c|}{$n\backslash{}k$} & \multicolumn{1}{c}{0} & \multicolumn{1}{c}{1} & \multicolumn{1}{c}{2} & \multicolumn{1}{c}{3} & \multicolumn{1}{c}{4} & \multicolumn{1}{c}{5} & \multicolumn{1}{c}{6} \\ \hline
2 & 1 & 2 \\
3 & 3 & 33 & 150 \\
4 & 15 & 492 & 7908 & 55320 \\
5 & 105 & 7725 & 291420 & 6179940 & 57939000 \\
6 & 945 & 132030 & 9603270 & 430105320 & 11292075000 & 132120450000 \\
7 & 10395 & 2471805 & 307525050 & 24586633890 & 1284266876760 & 40079165452200 & 560319972030000 \\ \hline
\end{tabular}
\caption{\label{tab:TCN_d3}$\mathrm{TC}^{(3)}_{n,k}$ for $2\leq n\leq7$ and $0 \leq k < n$.}
\end{table}

\begin{table}[!htb]
\centering
\begin{tabular}{c|cccccc} \hline
\multicolumn{1}{c|}{$n\backslash{}k$} & \multicolumn{1}{c}{0} & \multicolumn{1}{c}{1} & \multicolumn{1}{c}{2} & \multicolumn{1}{c}{3} & \multicolumn{1}{c}{4} & \multicolumn{1}{c}{5} \\ \hline
2 & 1 & 2 \\
3 & 3 & 48 & 546 \\
4 & 15 & 942 & 45132 & 1243704 \\
5 & 105 & 18375 & 2394360 & 227116260 & 11351644920 \\
6 & 945 & 375705 & 107314200 & 23919407460 & 3724353682560 & 291451508298720 \\ \hline
\end{tabular}
\caption{\label{tab:TCN_d4}$\mathrm{TC}^{(4)}_{n,k}$ for $2\leq n\leq 6$ and $0 \leq k < n$.}
\end{table}

\begin{table}[!htb]
\centering
\begin{tabular}{c|ccccc} \hline
\multicolumn{1}{c|}{$n\backslash{}k$} & \multicolumn{1}{c}{0} & \multicolumn{1}{c}{1} & \multicolumn{1}{c}{2} & \multicolumn{1}{c}{3} & \multicolumn{1}{c}{4} \\ \hline
2 & 1 & 2 \\
3 & 3 & 66 & 2016 \\
4 & 15 & 1650 & 242496 & 28710864 \\
5 & 105 & 39135 & 17566470 & 7876446840 & 2307919133520 \\ \hline
\end{tabular}
\caption{\label{tab:TCN_d5}$\mathrm{TC}^{(5)}_{n,k}$ for $2\leq n\leq 5$ and $0 \leq k < n$.}
\end{table}

\begin{table}[!htb]
\centering
\begin{tabular}{c|ccccc} \hline
\multicolumn{1}{c|}{$n\backslash{}k$} & \multicolumn{1}{c}{0} & \multicolumn{1}{c}{1} & \multicolumn{1}{c}{2} & \multicolumn{1}{c}{3} & \multicolumn{1}{c}{4} \\ \hline
2 & 1 & 2 \\
3 & 3 & 87 & 7524 \\
4 & 15 & 2700 & 1246740 & 676431360 \\
5 & 105 & 76515 & 118491090 & 262058953860 & 483098464854720 \\ \hline
\end{tabular}
\caption{\label{tab:TCN_d6}$\mathrm{TC}^{(6)}_{n,k}$ for $2\leq n\leq 5$ and  $0 \leq k < n$.}
\end{table}
\end{landscape}

\section{The Method of Component Graphs}\label{App-B}

The purpose of this appendix is to explain that the method of component graphs from \cite{CaZh} also extends from the bicombining case to the $d$-combining case. The method yields another formula for $\mathrm{TC}_{n,k}^{(d)}$ (see Section~\ref{ano-form-TC} below) which in the bicombining case was used in \cite{CaZh} to (a) compute values for small $n$ and $k$ and (b) derive formulas for $k=1$ and $k=2$. The same can be done with our extension in the $d$-combining case (see Section~\ref{formulas-fixed-k} below), however, the computation of values is more effective with the method proposed in Section~\ref{formula-TC} (even in the bicombining case). Moreover, the method from \cite{CaZh} was used in \cite{FuHuYu} to obtain the first-order asymptotics of $\mathrm{TC}_{n,k}^{(2)}$ for fixed $k$ as $n$ tends to infinity; again this carries over to the $d$-combining case (see Section~\ref{asymp-fixed-k} below).

\subsection{Exact Counting}\label{ano-form-TC}

The main idea of \cite{CaZh} was to reduce each tree-child network to its component graph and then conversely build all tree-child networks from their component graphs. Before explaining this in detail, we give the definition of a component graph.

\begin{df}[Component graph]
A component graph is a (rooted) vertex-labeled DAG such that the following properties hold.
\begin{enumerate}
\item[(i)] Every non-root node has in-degree equal to $d$;
\item[(ii)] the root has in-degree equal to $0$;
\item[(iii)] multi-edges between two nodes are allowed.
\end{enumerate}
\end{df}

We denote by $\mathcal{K}_m^{(d)}$ the set of component graphs with $m$ nodes and by $\mathcal{K}_{m,s}^{(d)}$ the set of component graphs with $m$ nodes of which $s$ are leaves. Set $k_{m}^{(d)} = \vert \mathcal{K}_m^{(d)}\vert$ and $k_{m,s}^{(d)}=\vert\mathcal{K}_{m, s}^{(d)}\vert$. Then,
\[
k_{m}^{(d)}=\sum_{s=1}^{m-1}k_{m,s}^{(d)},
\]
since the number of leaves of a component graph is at least $1$ and at most $m-1$ (for the star-component graph; see Figure~\ref{star-component}).

This, together with the following recursive formula for $k_{m,s}^{(d)}$, makes it possible to compute $k_m^{(d)}$. (See \cite[Theorem~15]{CaZh} for the bicombining case.)

\begin{thm}\label{numb-comp-graphs} For $m\geq 2$,
\[
k_{m,s}^{(d)} = \sum_{1\leq t\leq m-s-1}\, \binom{m}{s} \beta^{(d)}(m,s,t)\, k_{m-s,t}^{(d)},\qquad (1\leq s\leq m-1),
\]
with initial condition $k_{1,1}^{(d)}=1$ and
\[
\beta^{(d)}(m,s,t) = \sum_{0\leq\ell\leq t} (-1)^{\ell}\binom{t}{\ell}\binom{m-s-\ell+d-1}{d}.
\]
\end{thm}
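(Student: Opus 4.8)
The plan is to establish the recurrence bijectively, by peeling the leaves off a component graph. Given $G\in\mathcal{K}_{m,s}^{(d)}$, I would delete its $s$ leaves together with the edges entering them, obtaining a graph $G'$ on $m-s$ nodes. Because leaves are sinks, this deletion removes no in-edge of any surviving node, so every non-root node of $G'$ still has in-degree $d$, the root is untouched, and acyclicity is preserved; hence $G'\in\mathcal{K}_{m-s,t}^{(d)}$ for some $t$. The first point to nail down is the meaning of $t$: a surviving node is a leaf of $G'$ precisely when all of its children in $G$ were among the deleted leaves, so $t$ counts exactly those nodes that were internal in $G$ but turn into sinks once the $s$ leaves are removed. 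This also fixes the admissible range of $t$ and isolates the single-node base case $k_{1,1}^{(d)}=1$, which must be handled on its own since a one-node graph is its own leaf.

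For the reverse direction I would reconstruct every element of $\mathcal{K}_{m,s}^{(d)}$ from a triple of independent data. Start from $G'\in\mathcal{K}_{m-s,t}^{(d)}$; choose which $s$ of the labels $\{1,\dots,m\}$ are carried by the new leaves, in $\binom{m}{s}$ ways (after an order-isomorphic relabeling of $G'$, under which all subsequent counts are invariant); and finally attach $s$ new sink-nodes. Since multi-edges are allowed, each new leaf receives a size-$d$ multiset of parents drawn from the $m-s$ nodes of $G'$; because the new nodes are sinks they create no cycles, and their in-degree $d\ge 1$ makes reachability from the root automatic. The single remaining requirement is that the reconstructed graph have \emph{exactly} $s$ leaves, which is equivalent to demanding that each of the $t$ former leaves of $G'$ gain at least one child among the new leaves (otherwise it would survive as a leaf of $G$).

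The crux is to count these admissible attachments, and this is exactly where $\beta^{(d)}(m,s,t)$ enters via inclusion--exclusion on the former leaves left uncovered: if a fixed $\ell$-element subset of the $t$ former leaves is forbidden as a parent, then only $m-s-\ell$ nodes remain available and a new leaf has $\binom{m-s-\ell+d-1}{d}$ choices of parent-multiset, so combining the choices for the new leaves and summing $(-1)^{\ell}\binom{t}{\ell}$ over $0\le\ell\le t$ yields $\beta^{(d)}(m,s,t)$. Multiplying the three independent contributions gives $\binom{m}{s}\beta^{(d)}(m,s,t)\,k_{m-s,t}^{(d)}$, and summing over the admissible values of $t$ gives the claimed recurrence. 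I expect the main obstacle to be precisely this attachment step: verifying that ``exactly $s$ surviving leaves'' translates faithfully into ``every former leaf is covered,'' and carefully tracking how the $s$ new sink-nodes jointly contribute to the count (so that the inclusion--exclusion is set up over the right event), is the delicate part. I would cross-check the resulting formula against the small tabulated values of $k_{m,s}^{(d)}$ before trusting it in general.
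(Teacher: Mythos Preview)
Your proposal is correct and follows essentially the same approach as the paper's own proof: both peel off the $s$ sink-nodes to obtain a smaller component graph $G'\in\mathcal{K}_{m-s,t}^{(d)}$, then reconstruct $G$ by choosing the $s$ leaf labels (the factor $\binom{m}{s}$, with the order-consistent relabeling of $G'$) and by counting admissible parent-multisets for the new leaves via inclusion--exclusion over which of the $t$ former leaves of $G'$ remain uncovered. Your phrase ``combining the choices for the new leaves'' correctly produces the exponent $s$ on the multiset binomial $\binom{m-s-\ell+d-1}{d}$, which is exactly what the paper's proof writes; the printed statement omits this exponent, so your derivation in fact matches the proof rather than the (typo-afflicted) display.
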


\begin{proof}
The recurrence can be obtained by the following way of constructing all component graphs in $\mathcal{K}_{m,s}^{(d)}$ from those in $\mathcal{K}_{m-s,t}^{(d)}$:
\begin{enumerate}
\item[(i)] Choose $t$ with $1\leq t\leq m-s-1$ and a graph $G$ in $\mathcal{K}_{m-s,t}^{(d)}$;
\item[(ii)] Add $s$ new nodes, labelled by $\{1',\cdots,s'\}$, to $G$ such that (a) these nodes become the new leaves and (b)  all old leaves have at least one out-going edge (i.e., all of them become internal nodes). By the inclusion–exclusion principle, there are
\[
\beta^{(d)}(m,s,t)=\sum_{0 \leq\ell\leq t} (-1)^{\ell}\binom{t}{\ell} \binom{m-s-\ell+d-1}{d}^s
\]
ways of choosing the $d$ incoming edges for the new leaves; here, the counting is done such that $\ell$ of the old leaves are not used as parents of the new leaves;
\item[(iii)] Choose $s$ leaves from the set of labels $\{1,\ldots,m\}$ and use them to re-label the new leaves; use the remaining labels to re-label the remaining nodes in an order-consistent way.\qedhere
\end{enumerate}
\end{proof}

\begin{figure}[t]
\centering
\includegraphics[scale=1]{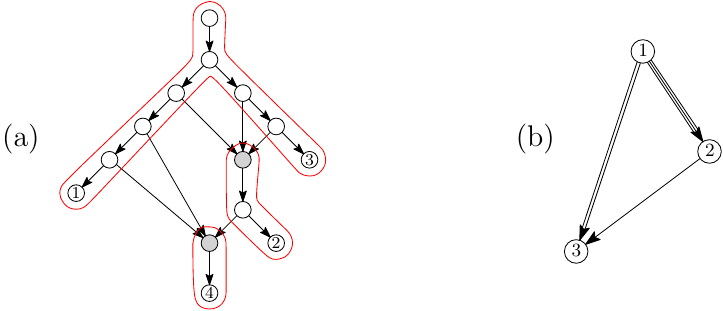}
\caption{(a) The $3$-combining network (with two reticulation nodes) from Figure~\ref{pn-fig}, (a). The three tree-components when incoming edges of the reticulation nodes are removed are encircled in red. (b) The corresponding component graph.}\label{3-comg-ex}
\end{figure}

Now assume that a tree-child networks $N$ with $k$ reticulation nodes is given. Then, its component graph is obtained as follows. First, remove all the incoming edges of the reticulation nodes. The resulting graph is a forest consisting of $k+1$ (directed) trees (called {\it tree-components}) which are either rooted at the network root or at a reticulation node; see Figure~\ref{3-comg-ex}, (a). The component graph of $N$ is then the DAG whose vertex set corresponds to the set of tree-components with an edge between vertices if there was an edge (which was deleted in the first step above) connecting the two tree-components. The vertices are labeled as follows: consider the set of leaf-labels of each tree-component which partition $\{1,\ldots,n\}$. The blocks of this partition can be indexed by the rank of the smallest element of the block; these indices are then given as label to the corresponding nodes of the component graph; see Figure~\ref{3-comg-ex}, (b) for an example.

The above construction reduces every tree-child network with $k$ reticulation nodes to a component graph with $k+1$ nodes. Conversely, all tree-child networks with $k$ reticulation nodes can be obtained from the component graphs with $k+1$ nodes by a blow-up procedure: first choose a component graph with $k+1$ nodes and a set partition of $\{1,\ldots,n\}$ into $k+1$ blocks. Order the blocks according to the ranks of their smallest element and distribute them to the vertices of the (chosen) component graph so that the node with label $j$ receives the $j$th block ($1\leq j\leq k+1$). For each node, choose a phylogenetic tree whose set of leaf-labels corresponds to the block of the partition which was assigned to the node. Next, add nodes on the edges of the phylogenetic trees such that the number of added nodes equals the out-degrees of the nodes of the component graph. Finally, connect these nodes to the roots of the phylogenetic trees in the same way as the corresponding nodes in the component graph are connected. The resulting networks are all tree-child networks with $k$ reticulation nodes.

The blow-up procedure just described immediately translates into a formula for $\mathrm{TC}_{n,k}^{(d)}$. (See \cite[Theorem~16]{CaZh} for the bicombining case.)

\begin{thm}
Let $\prod_{n,k+1}$ be the set of partitions of $\{1,\ldots,n\}$ into $k+1$ blocks. Then,
\begin{equation}\label{formula-TC-2}
\mathrm{TC}_{n,k}^{(d)} =\frac{1}{2^{n-k-1}}\,\sum_{\{B_j\}_{j=1}^{k+1} \in \prod_{n,k+1}}\,\sum_{G\in \mathcal{K}_{k+1}^{(d)}} \,\prod_{j=1}^{k+1}\,\frac{(2b_j+g_j-2)!}{(b_j-1)! \prod_{\ell=1}^{k+1}(g_{j,\ell})!},
\end{equation}
where $b_j=\vert B_j\vert$ for $1 \leq j \leq k+1$, $g_{j,\ell}$ is the number of edges in $G$ which are directed from node $j$ to node $\ell$, and $g_j = \sum_{\ell} g_{j,\ell}$ is the out-degree of node $j$ for $1 \leq j \leq k+1$.
\end{thm}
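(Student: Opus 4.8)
The plan is to read off \eqref{formula-TC-2} directly from the blow-up procedure described above and then to confirm that this procedure is a bijection, so that the resulting product counts are exact. Every tree-child network $N$ with $n$ leaves and $k$ reticulation nodes is reduced canonically to a pair consisting of its component graph $G\in\mathcal{K}_{k+1}^{(d)}$ and the set partition $\{B_j\}_{j=1}^{k+1}\in\prod_{n,k+1}$ of the leaf labels induced by the tree-components; the blocks are ordered by their smallest element, so that $B_j$ is the block assigned to the node of $G$ carrying label $j$. Conversely, a network is rebuilt from such a pair by making, independently for each component node $j$, a local choice of a phylogenetic tree on the labels $B_j$ and a placement of the $g_j$ outgoing edges of $j$. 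I would therefore fix $(G,\{B_j\})$, write its contribution as a product $\prod_{j=1}^{k+1}N_j$ over the component nodes, and then sum over all pairs; the two outer sums are exactly those in the statement.

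Next I would evaluate the local factor $N_j$, which splits into two independent choices. First, the phylogenetic tree on the $b_j=|B_j|$ labels of the $j$th component can be chosen in $\mathrm{OTC}^{(d)}_{b_j,0}=(2b_j-3)!!=(2b_j-2)!/(2^{b_j-1}(b_j-1)!)$ ways by \eqref{eq:phylotreeformula}. Second, I place the $g_j$ nodes that serve as the sources of the out-edges of $j$. As in Remark~\ref{rem:OTCdirect}, such a tree has exactly $2b_j-1$ edges available for insertion, namely its $2b_j-2$ internal edges together with the root edge of the component. Recording the vertical order of the inserted nodes along each edge, the placement of $g_j$ distinguishable source nodes into these $2b_j-1$ linearly ordered edges can be carried out in $(2b_j-1)(2b_j)\cdots(2b_j+g_j-2)=(2b_j+g_j-2)!/(2b_j-2)!$ ways; since the $g_{j,\ell}$ parallel edges from $j$ to a fixed target $\ell$ are indistinguishable (they are multi-edges of $G$, and the parents of a reticulation node carry no order), I divide by $\prod_{\ell}g_{j,\ell}!$. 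Multiplying the two choices and extracting the factor $2^{b_j-1}$ from the double factorial yields
\[
N_j=\frac{1}{2^{b_j-1}}\cdot\frac{(2b_j+g_j-2)!}{(b_j-1)!\,\prod_{\ell=1}^{k+1}g_{j,\ell}!}.
\]
Taking the product over $j$ and using $\sum_{j}(b_j-1)=n-(k+1)$ pulls out the global prefactor $2^{-(n-k-1)}$, giving precisely the summand of \eqref{formula-TC-2}.

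Finally, and this is where the actual content lies, I would verify that the correspondence $N\leftrightarrow(G,\{B_j\},\text{trees},\text{placements})$ is a genuine bijection, so that nothing is over- or undercounted. The reduction of $N$ to its component graph is canonical once the block-ordering convention is fixed; the reverse blow-up must be shown to produce a valid tree-child network and to send distinct local data to distinct networks. I expect the main obstacle to be getting the insertion count exactly right: the delicate points are (a) confirming that all $2b_j-1$ edges are genuinely insertable — in particular the component root edge, and including the degenerate single-leaf case $b_j=1$, where $(2b_j-3)!!=1$ and there is exactly one insertable edge — and that inserting parent nodes there never violates the tree-child property; and (b) justifying the symmetry factor $\prod_{\ell}g_{j,\ell}!$, i.e. that two placements differing only by a permutation of the sources feeding a common target yield the same network, whereas placements differing in vertical order or in target assignment do not. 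I would also check that the labelling of $G$ by the ranks of the block minima is forced, so that the double sum over $\mathcal{K}_{k+1}^{(d)}$ and $\prod_{n,k+1}$ visits each network exactly once.
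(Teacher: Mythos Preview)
Your proposal is correct and follows essentially the same approach as the paper's proof: fix $(G,\{B_j\})$, count phylogenetic trees on each block via $(2b_j-3)!!$, count ordered insertions of the $g_j$ source nodes along the $2b_j-1$ edges as a rising factorial, divide by $\prod_\ell g_{j,\ell}!$ for the indistinguishable parallel edges, and collect the powers of $2$. Your explicit discussion of the delicate points (a) and (b) and of the labelling convention adds rigor that the paper delegates to the blow-up description preceding the theorem, but the argument is the same.
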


\begin{proof}
By the description preceding the theorem, the formula is explained as follows:
\begin{enumerate}
\item[(i)] $G$ is the chosen graph in $\mathcal{K}_{k+1}^{(d)}$ and $\{B_j\}_{j=1}^{k+1}$ is the chosen partition in $\prod_{n,k+1}$.
\item[(ii)] The number of possible phylogenetic trees assigned to the nodes of $G$ is:
\begin{equation}\label{num-pts}
\prod_{j=1}^{k+1}(2b_j-3)!! = \prod_{j=1}^{k+1} \frac{(2b_j-2)!}{2^{b_j-1}(b_j-1)!},
\end{equation}
where we assume that $B_j$ is the block belonging to node $j$ in $G$.
\item[(iii)] The number of ways of adding nodes to the phylogenetic tree of node $j$ is:
\[
(2b_j-1)\cdots(2b_j-1+g_j-1)
\]
\item[(iv)] Connecting the nodes which have been added to the phylogenetic tree of node $j$ to the root of the phylogenetic tree of node $\ell$ gives every tree-child networks $(g_{j,\ell})!$ times. Thus, the number of tree-child networks arising from a fixed choice of $G$, $\{B_j\}_{j=1}^{k+1}$ and a set of phylogenetic trees (whose number of leaves equals to $b_j$ for $1\leq j\leq k+1$) is:
\begin{equation}\label{num-edges}
\prod_{j=1}^{k+1}\frac{(2b_j-1)\cdots(2b_j-1+g_j-1)}{\prod_{\ell=1}^{k+1}(g_{j,\ell})!}
\end{equation}
\item[(v)] Finally, multiplying (\ref{num-pts}) and (\ref{num-edges}) gives
\begin{align*}
\prod_{j=1}^{k+1} \frac{(2b_j-2)!}{2^{b_j-1}(b_j-1)!}
\frac{(2b_j-1)\cdots(2b_j-1+g_j-1)}{\prod_{\ell=1}^{k+1}(g_{j,\ell})!}&=\frac{1}{2^{(\sum_{j}b_j)-k-1}}\prod_{j=1}^{k+1}\,\frac{(2b_j+g_j-2)!}{(b_j-1)! \prod_{\ell=1}^{k+1}(g_{j,\ell})!}\\
&=\frac{1}{2^{n-k-1}}\prod_{j=1}^{k+1}\,\frac{(2b_j+g_j-2)!}{(b_j-1)! \prod_{\ell=1}^{k+1}(g_{j,\ell})!},
\end{align*}
which is the claimed factor (in front of and inside) of the double sum in (\ref{formula-TC-2}).\qedhere
\end{enumerate}
\end{proof}

\subsection{Values and Formulas for Small \texorpdfstring{$k$}{k}}\label{formulas-fixed-k}

Formula \eqref{formula-TC-2} makes it possible to compute the values of $\mathrm{TC}_{n,k}^{(d)}$ for small values of $n,k,d$. (In \cite{CaZh}, this was done for the special case $d=2$.) However, the method presented in Section~\ref{formula-TC} to achieve the same task is less computation-intensive since using \eqref{formula-TC-2} makes it necessary to generate all component graphs with $k+1$ vertices. (The number of component graphs increases rapidly as can be seen from Theorem~\ref{numb-comp-graphs}.)

Another application of (\ref{formula-TC-2}), which was also given in \cite{CaZh}, is the derivation of formulas for small values of $k$ (which hold for all $n$), e.g., in \cite{CaZh} such formulas were obtained for $d=2$ and $k=1$ and $k=2$. (See also \cite{FuGiMa2,PoBa} for related formulas.) Indeed, with (\ref{formula-TC-2}), we can give now generalizations for general $d\geq 2$.

We start with expressions which contain generating functions.

\begin{pro} Set
\[
f_{d}(z):=\sum_{m\geq 1}\frac{(2m+d-2)!}{(m-1)!m!}z^m.
\]
\begin{itemize}
\item[(i)] For $k=1$,
\begin{equation}\label{TCn1}
\mathrm{TC}_{n,1}^{(d)}=\frac{n!}{d!2^{n-2}}[z^n]f_d(z)f_0(z).
\end{equation}
\item[(ii)] For $k=2$,
\begin{equation}\label{TCn2}
\mathrm{TC}_{n,2}^{(d)}=\frac{n!}{d!\,2^{n-3}}\,\sum_{\ell=0}^{d}\,\frac{1}{(d-\ell)!\,\ell!}\,[z^n] f_{2d-\ell}(z)\,f_{\ell}(z)\,f_0(z)-\frac{n!}{(d!)^22^{n-2}}[z^n]\,f_{2d}(z)\,f_{0}^2(z).
\end{equation}
\end{itemize}
\end{pro}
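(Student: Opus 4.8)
The plan is to specialize the component-graph formula~\eqref{formula-TC-2} to $k=1$ and $k=2$, where the inner sum runs over the finitely many graphs in $\mathcal{K}_2^{(d)}$ and $\mathcal{K}_3^{(d)}$, and then to repackage the resulting finite sums over block sizes as coefficient extractions from products of the series $f_g$. The crucial bookkeeping identity is
\[
n!\,[z^n]\prod_{j}f_{g_j}(z)=\sum_{b_1+\cdots+b_p=n}\frac{n!}{b_1!\cdots b_p!}\prod_j\frac{(2b_j+g_j-2)!}{(b_j-1)!},
\]
so that $f_g$ is the series attached to a component of out-degree $g$: its coefficient $\frac{(2m+g-2)!}{(m-1)!m!}$ supplies exactly the per-node factor $\frac{(2b_j+g_j-2)!}{(b_j-1)!}$ of~\eqref{formula-TC-2} together with one power $\frac{1}{b_j!}$ that, after multiplication by $n!$, becomes the multinomial distributing the $n$ leaf-labels among the components. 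The global $2^{-(n-k-1)}$ of~\eqref{formula-TC-2} is reproduced by $\prod_j 2^{b_j-1}=2^{n-k-1}$, and the multi-edge multiplicities $\prod_{j,\ell}(g_{j,\ell})!$ are pulled out as numerical prefactors.

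For part (i) I would first observe that $\mathcal{K}_2^{(d)}$ consists of exactly two graphs: the root joined to the single leaf by $d$ parallel edges, with either vertex carrying the label $1$ (i.e.\ with leaf $1$ lying in the root component or in the reticulation component). These have out-degree sequences $(d,0)$ and $(0,d)$ and each contributes multi-edge weight $d!$. Summing both contributions of~\eqref{formula-TC-2} over all two-block partitions and matching against $\frac{n!}{d!2^{n-2}}[z^n]f_d(z)f_0(z)$ then reduces to the elementary identity $\binom{n-1}{b_1-1}+\binom{n-1}{b_1}=\binom{n}{b_1}$: the free multinomial $\binom{n}{b_1}$ produced by the product $f_d f_0$ is precisely the sum of the two min-ordered partition counts coming from the two vertex-labelings, which gives~\eqref{TCn1}.

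For part (ii) the work is in enumerating $\mathcal{K}_3^{(d)}$, where I expect exactly two structural types. The \emph{chain} type has its two non-root components linearly ordered below the root, with $\ell$ edges from the middle component to the bottom one and the remaining $d-\ell$ incoming edges of the bottom component coming directly from the root ($1\le \ell\le d$); its out-degree sequence is $(2d-\ell,\ell,0)$ and its multi-edge weight is $d!\,(d-\ell)!\,\ell!$, so by the translation above it contributes $\frac{n!}{d!(d-\ell)!\ell!\,2^{n-3}}[z^n]f_{2d-\ell}f_\ell f_0$. The \emph{star} type has both non-root components as leaves attached to the root by $d$ parallel edges each; its out-degree sequence is $(2d,0,0)$, its multi-edge weight is $(d!)^2$, and its two leaf components are interchangeable, so an automorphism factor $\tfrac12$ must be inserted, giving the contribution $\frac{n!}{(d!)^2 2^{n-2}}[z^n]f_{2d}f_0^2$. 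Adding the chain contributions for $1\le\ell\le d$ to this star contribution yields $\mathrm{TC}_{n,2}^{(d)}$. To reach the stated form one then notices that the $\ell=0$ instance of the chain expression equals $\frac{n!}{(d!)^2 2^{n-3}}[z^n]f_{2d}f_0^2$, i.e.\ exactly twice the star's contribution; hence writing the sum from $\ell=0$ and subtracting the excess $\frac{n!}{(d!)^2 2^{n-2}}[z^n]f_{2d}f_0^2$ gives formula~\eqref{TCn2}.

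The main obstacle is the label bookkeeping, i.e.\ checking that the min-ordered set-partition sum together with the sum over vertex-labeled component graphs in~\eqref{formula-TC-2} really coincides with the free multinomial distribution carried by the generating-function products. For the chain type the three components play distinct structural roles, so this matching is a bijection; for the symmetric star one must correctly identify the automorphism factor $\tfrac12$, and this is exactly what the subtracted term in~\eqref{TCn2} encodes. A careful, exhaustive enumeration of $\mathcal{K}_3^{(d)}$ (ensuring that no shape is omitted and none counted twice) is therefore the most delicate and error-prone step of the argument.
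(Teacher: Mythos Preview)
Your proposal is correct and follows essentially the same route as the paper. The paper also specializes~\eqref{formula-TC-2}, lists the two labeled graphs in $\mathcal{K}_2^{(d)}$ and (with a harmless reparameterization $\ell\leftrightarrow d-\ell$) the chain/star shapes in $\mathcal{K}_3^{(d)}$, and uses the Pascal identity $\binom{n-1}{b_1-1}+\binom{n-1}{b_1}=\binom{n}{b_1}$ (for $k=1$) and the count of $3$ versus $3!$ vertex-labelings (your automorphism factor $\tfrac12$, for $k=2$) to match the min-ordered partition sum against the free multinomial carried by the products $f_{g_1}\cdots f_{g_{k+1}}$; your add-the-$\ell=0$-term-and-subtract trick to reach~\eqref{TCn2} is exactly the final rewriting the paper performs.
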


\begin{proof}
We start with $k=1$. By using (\ref{formula-TC-2}), we have
\[
\mathrm{TC}_{n,1}^{(d)}=\frac{1}{2^{n-2}}\,\sum_{\{B_j\}_{j=1}^{2}\in\prod_{n,2}}\,\sum_{G\in\mathcal{K}_{2}^{(d)}}\,\prod_{j=1}^{2}\,\frac{(2b_j+g_j-2)!}{(b_j-1)!\,\prod_{\ell=1}^{2}(g_{j,\ell})!}.
\]
Observe that $\mathcal{K}_{2}^{(d)}$ contains only two graphs, namely, the graph consisting of a root to which a child is attached by $d$ edges and either the root has label $1$ or $2$. Consequently,
\[
\mathrm{TC}_{n,1}^{(d)}=\frac{1}{d!\,2^{n-2}}\,\sum_{\{B_j\}_{j=1}^{2}\in\prod_{n,2}}\,\left(\frac{(2b_1+d-2)!}{(b_1-1)!}\cdot\frac{(2b_2-2)!}{(b_2-1)!}+ \frac{(2b_1-2)!}{(b_1-1)!}\cdot\frac{(2b_2+d-2)!}{(b_2-1)!}\right).
\]
Summing according to the size of the blocks in the partition $\{B_j\}_{j=1}^{2}$, we have
\begin{align*}
\mathrm{TC}_{n,1}^{(d)}&=\frac{1}{d!\,2^{n-2}}\,\sum_{\substack{b_1 + b_2 = n,\\ b_1,b_2\geq 1}}\,\binom{n-1}{b_1-1}\,\frac{(2b_1+d-2)!\,(2b_2-2)!+(2b_1-2)!\,(2b_2+d-2)!}{(b_1-1)!\,(b_2-1)!} \\
&=\frac{1}{d!\,2^{n-2}}\,\sum_{b=1}^{n-1}\,\binom{n}{b}\,\frac{(2b+d-2)!\,(2n-2b-2)!}{(b-1)!\,(n-b-1)!}\end{align*}
which translates into (\ref{TCn1}) by using the generating function $f_d(z)$.

Next, we consider $k=2$. Here, again by (\ref{formula-TC-2}),
\[
\mathrm{TC}_{n,2}^{(d)}=\frac{1}{2^{n-3}}\,\sum_{\{B_j\}_{j=1}^{3}\in\prod_{n,3}}\,\sum_{G\in\mathcal{K}_{3}^{(d)}}\,\prod_{j=1}^{3}\,\frac{(2b_j+g_j-2)!}{(b_j-1)!\,\prod_{\ell=1}^{3}(g_{j,\ell})!}.
\]
In contrast to $k=1$, there are now more possibilities for $G$: $G$ has three vertices $v_1,v_2,v_3$ one of which is the root (say $v_1$); $v_1$ is connected to $v_2$ by $d$ edges and to $v_3$ by $\ell$ edges with $0\leq\ell\leq d$; moreover, $v_2$ is connected to $v_3$ by $d-\ell$ edges; finally, there are $3$ possible labelings if $\ell=d$ and $3!=6$ possible labelings if $\ell<d$. Thus,
\begin{align*}
\mathrm{TC}_{n,2}^{(d)}=&\frac{1}{2^{n-3}}\sum_{b_1+b_2+b_3=n}\binom{n}{b_1,b_2,b_3}\frac{1}{3!}\\
&\qquad\times\left(\sum_{\ell=0}^{d-1}\frac{3!}{d!\ell!(d-\ell)!}\cdot\frac{(2b_1+d+\ell-2)!}{(b_1-1)!}\cdot\frac{(2b_2+d-\ell-2)!}{(b_2-1)!}\cdot\frac{(2b_3-2)!}{(b_3-1)!}\right.\\
&\qquad\qquad\left.+\frac{3}{(d!)^2}\cdot\frac{(2b_1+2d-2)!}{(b_1-1)!}\cdot\frac{(2b_2-2)!}{(b_2-1)!}\cdot\frac{(2b_3-2)!}{(b_3-1)!}\right).
\end{align*}
From this, by using the generating $f_d(z)$, we obtain
\[
\mathrm{TC}_{n,2}^{(d)}=\frac{n!}{d!2^{n-3}}\sum_{\ell=1}^{d}\frac{1}{\ell!(d-\ell)!}[z^n]f_{2d-\ell}(z)f_{\ell}(z)f_0(z)+\frac{n!}{(d!)^22^{n-2}}[z^n]f_{2d}(z)f_0^2(z)
\]
which is equivalent to (\ref{TCn2}).
\end{proof}

In order to simplify these expressions, we need two technical lemmas. The first gives a recursive way of computing $f_d(z)$.

\begin{lmm}\label{rec-fdX}
Set $X:=\sqrt{1-4z}$. Then, $f_d(X)$ can be recursively computed as
\[
f_d(X)=(-X^{-1}+X)f_{d-1}'(X)+(d-2)f_{d-1}(X),\qquad(d\geq 1)
\]
with initial condition $f_0(X)=1/2-X/2$.
\end{lmm}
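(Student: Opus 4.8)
The plan is to derive a differential-functional identity relating $f_d$ and $f_{d-1}$ directly at the level of coefficients, and then translate it into the variable $X=\sqrt{1-4z}$ via the chain rule. First I would observe that the coefficients satisfy the simple relation
\[
[z^m]f_d(z)=\frac{(2m+d-2)!}{(m-1)!\,m!}=(2m+d-2)\,\frac{(2m+d-3)!}{(m-1)!\,m!}=(2m+d-2)\,[z^m]f_{d-1}(z).
\]
Splitting $2m+d-2=2m+(d-2)$ and using that $\sum_m m\,a_m z^m=z\frac{d}{dz}\sum_m a_m z^m$, this gives
\[
f_d(z)=2z\,f_{d-1}'(z)+(d-2)\,f_{d-1}(z),
\]
where here the prime denotes $d/dz$.

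Next I would perform the change of variables $X=\sqrt{1-4z}$, so that $z=(1-X^2)/4$ and hence $dz/dX=-X/2$. Writing (abusing notation) $f_d$ also for the function of $X$ obtained by this substitution, the chain rule gives $f_{d-1}'(z)=-\tfrac{2}{X}\,f_{d-1}'(X)$, where the prime on the right is now $d/dX$. Substituting $z=(1-X^2)/4$ and this expression into the identity above, the coefficient of $f_{d-1}'(X)$ becomes
\[
2\cdot\frac{1-X^2}{4}\cdot\Bigl(-\frac{2}{X}\Bigr)=\frac{X^2-1}{X}=-X^{-1}+X,
\]
which yields exactly the claimed recurrence.

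Finally I would verify the initial condition. Recognizing $[z^m]f_0(z)=\frac{(2m-2)!}{(m-1)!\,m!}=C_{m-1}$ as the $(m-1)$st Catalan number, we have $f_0(z)=z\,C(z)$ with $C(z)=(1-\sqrt{1-4z})/(2z)$ the Catalan generating function, so $f_0(z)=(1-\sqrt{1-4z})/2=1/2-X/2$, as required. I expect no serious obstacle here: the computation is routine once the coefficient relation is spotted. The only point demanding care is the bookkeeping of the chain rule—keeping track of whether the prime means $d/dz$ or $d/dX$—since the statement silently reuses the symbol $f_d$ for the function of $X$ obtained after substitution.
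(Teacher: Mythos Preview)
Your proof is correct and follows essentially the same route as the paper's: both derive the identity $f_d(z)=2z\,f_{d-1}'(z)+(d-2)\,f_{d-1}(z)$ from the coefficient relation $(2m+d-2)=2m+(d-2)$, then pass to the variable $X$ by the chain rule, and verify the initial condition via the Catalan generating function. You spell out the chain-rule computation more explicitly than the paper does, but the argument is otherwise identical.
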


\begin{proof}
First, for the initial condition, observe that
\[
\frac{(2m-2)!}{(m-1)!m!}=\frac{1}{m}\binom{2m-2}{m-1}
\]
and thus $f_0(z)$ is the generating function of the (shifted) Catalan numbers. Consequently,
\[
f_0(z)=\frac{1-\sqrt{1-4z}}{2}=\frac{1-X}{2}.
\]

Next, in order to prove the recurrence, we have
\begin{align*}
f_d(z)&=2\sum_{m\geq 1}\frac{m(2m+d-3)!}{(m-1)!m!}z^m+(d-2)\sum_{m\geq 1}\frac{(2m+d-3)!}{(m-1)!m!}z^m\\
&=2zf_{d-1}'(z)+(d-2)f_{d-1}'(z).
\end{align*}
Writing this in terms of $X$ gives the claimed result.
\end{proof}

From this lemma, we see that the first few values of $f_d(X)$ are:
\[
f_1(X)=\frac{1}{2X}-\frac{1}{2},\quad f_2(X)=\frac{1}{2X^3}-\frac{1}{2X},\quad f_3(X)=\frac{3}{2X^5}-\frac{3}{2X^3}.
\]
In particular, it is not hard to see that $f_d(X)$ for $d\geq 2$ is a (finite) linear combinations of terms of the form $X^{-m}$ with $m$ odd.

We need a second technical lemma that helps us with the extraction of coefficients.

\begin{lmm}
Let $X:=\sqrt{1-4z}$. Then, for odd $m$
\[
[z^n]\,X^{-m}=\frac{1}{\binom{m-1}{(m-1)/2}}\,\binom{n+(m-1)/2}{(m-1)/2}\,\binom{2n+m-1}{n+(m-1)/2}
\]
and for even $m$
\[
[z^n]\,X^{-m}=4^n\,\binom{n+(m-2)/2}{(m-2)/2}.
\]
\end{lmm}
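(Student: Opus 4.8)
The plan is to extract the coefficient directly from the generalized binomial series. Since $X^{-m}=(1-4z)^{-m/2}$, the generalized binomial theorem gives
\[
[z^n]\,X^{-m}=\binom{-m/2}{n}(-4)^n=\frac{4^n}{n!}\prod_{j=0}^{n-1}\Big(\frac{m}{2}+j\Big),
\]
that is, $4^n$ times the rising factorial $(m/2)(m/2+1)\cdots(m/2+n-1)/n!$. The entire lemma then reduces to rewriting this Pochhammer symbol as the claimed products of binomials, and I would treat the two parities of $m$ separately.

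For even $m$, write $m=2p$ so that $m/2=p$ is a positive integer. Then $\prod_{j=0}^{n-1}(p+j)=(p+n-1)!/(p-1)!$, whence
\[
[z^n]\,X^{-m}=4^n\,\frac{(p+n-1)!}{(p-1)!\,n!}=4^n\binom{n+p-1}{n}=4^n\binom{n+(m-2)/2}{(m-2)/2},
\]
which is exactly the second formula.

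For odd $m$, write $m=2p+1$ so that $m/2=p+\tfrac12$ and $(m-1)/2=p$. Factoring $2^{-n}$ out of the product turns it into a product of $n$ consecutive odd integers $(2p+1)(2p+3)\cdots(2p+2n-1)$, which I would evaluate by complementation: the product of all integers from $2p+1$ to $2p+2n$ equals $(2p+2n)!/(2p)!$, while the even factors in that range contribute $2^n(p+n)!/p!$; dividing one by the other expresses the product of the odd factors as $p!\,(2p+2n)!/\bigl(4^n(2p)!\,(p+n)!\bigr)$. Substituting back and cancelling the powers of $4$ yields
\[
[z^n]\,X^{-m}=\frac{p!\,(2n+2p)!}{n!\,(2p)!\,(n+p)!},
\]
and a direct factorial expansion confirms this equals $\binom{2p}{p}^{-1}\binom{n+p}{p}\binom{2n+2p}{n+p}$, i.e.\ the first formula with $p=(m-1)/2$.

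The only slightly delicate point is the bookkeeping in the odd case, namely tracking the factors of $2^n$ and $4^n$ so that they cancel correctly and then regrouping the resulting ratio of factorials into the prescribed three binomial coefficients; everything else is routine. As a sanity check I would verify the small cases $m=1$ (giving $[z^n]X^{-1}=\binom{2n}{n}$, consistent with $X^{-1}$ being the generating function of the central binomial coefficients) and $m=2$ (giving $[z^n]X^{-2}=4^n$), both of which match the stated formulas.
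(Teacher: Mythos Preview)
Your proof is correct and follows essentially the same approach as the paper: both start from the generalized binomial expansion $[z^n]X^{-m}=\binom{-m/2}{n}(-4)^n$ and then reduce the resulting Pochhammer-type product to the stated binomial expressions by routine manipulations. The paper in fact gives fewer details than you do, stopping at the expression $2^n m(m+2)\cdots(m+2n-2)/n!$ and declaring the rest ``standard manipulations''; your treatment of the odd case via complementation makes those manipulations explicit.
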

\begin{proof}
Note that
\[
[z^n]\,X^{-m}=\binom{-m/2}{n}\,(-4)^n= \frac{m(m+2)\cdots(m+2n-2)}{n!}\,2^n.
\]
From this both claims follow by standard manipulations.
\end{proof}

Now, we can simplify (\ref{TCn1}) for small values of $d$;
recall the notation~\eqref{eq:doublfac} of double factorials.

\begin{cor}
The number of tree-child networks with $n$ leaves and $1$ reticulation node is
\begin{itemize}
\item[(i)] for $d=2$:
\[
\mathrm{TC}_{n,1}^{(2)}=n \big((2n-1)!!-(2n-2)!! \big);
\]
\item[(ii)] for $d=3$:
\[
\mathrm{TC}_{n,1}^{(3)}=\frac{n(2n+1)}{3}(2n-1)!!-n^2(2n-2)!! \, .
\]
\end{itemize}
\begin{proof}
First, from (\ref{TCn1}) and the initial condition in Lemma~\ref{rec-fdX}:
\begin{equation}\label{st-1}
\mathrm{TC}_{n,1}^{(d)}=\frac{n!}{d!2^{n-1}}[z^n]f_d(z)-\frac{n!}{d!2^{n-1}}[z^n]f_d(z)X=\frac{(2n+d-2)!}{d!2^{n-1}(n-1)!}-\frac{n!}{d!2^{n-1}}[z^n]f_d(z)X.
\end{equation}
The second term becomes for $d=2$,
\[
[z^n]f_2(z)X=[z^{n}]\left(\frac{1}{2\,X}- \frac{1}{2\,X^3}\right)X=-\frac{1}{2}[z^{n}]X^{-2}=-\frac{4^n}{2}
\]
and for $d=3$,
\[
[z^n]f_3(z)X=[z^{n}]\left(\frac{3}{2X^5}-\frac{3}{2X^3}\right)X=\frac{3}{2}[z^n]X^{-4}-\frac{3}{2}[z^n]X^{-2}
=\frac{3\cdot 4^n}{2}n.
\]
Plugging this into (\ref{st-1}) and standard manipulations give the claimed result.
\end{proof}
\end{cor}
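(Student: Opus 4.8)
The plan is to reduce the coefficient extraction in (\ref{TCn1}) to a single manageable coefficient by splitting $f_0$ into its two summands. First I would substitute the closed form $f_0(z) = (1-X)/2$ with $X=\sqrt{1-4z}$ from Lemma~\ref{rec-fdX} into (\ref{TCn1}), which gives
\[
[z^n]f_d(z)f_0(z) = \tfrac{1}{2}[z^n]f_d(z) - \tfrac{1}{2}[z^n]f_d(z)X.
\]
The first coefficient is immediate from the definition of $f_d$, namely $[z^n]f_d(z)=\frac{(2n+d-2)!}{(n-1)!\,n!}$, and after multiplying by the prefactor $\frac{n!}{d!\,2^{n-2}}$ this produces exactly the first (explicit factorial) term of (\ref{st-1}). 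So everything reduces to computing $[z^n]f_d(z)X$.

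The crux is then to exploit the structure of $f_d(X)$. By Lemma~\ref{rec-fdX} and the values listed right after it, $f_d(X)$ for $d\ge 2$ is a finite linear combination of \emph{odd} powers $X^{-m}$. Multiplying by $X$ turns $f_d(z)X$ into a linear combination of \emph{even} powers $X^{-m}$ (up to an additive constant), and for these the second coefficient-extraction lemma gives the clean value $[z^n]X^{-m}=4^n\binom{n+(m-2)/2}{(m-2)/2}$. Concretely, for $d=2$ one has $f_2(z)X = \tfrac12 X^{-2}-\tfrac12$, whence $[z^n]f_2(z)X = \tfrac12\,4^n$ for $n\ge 1$; for $d=3$ one has $f_3(z)X = \tfrac32 X^{-4}-\tfrac32 X^{-2}$, whence $[z^n]f_3(z)X = \tfrac32\bigl(4^n(n+1)-4^n\bigr)=\tfrac32\,4^n n$. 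The additive constants contribute only at $n=0$ and are therefore irrelevant.

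Finally I would substitute these two values back into (\ref{st-1}) and rewrite the resulting factorials in double-factorial form, using (\ref{eq:doublfac}) together with $(2n)! = 2^n n!\,(2n-1)!!$, $(2n-2)!! = 2^{n-1}(n-1)!$, and $(2n+1)!! = (2n+1)(2n-1)!!$. For $d=2$ the first term becomes $n(2n-1)!!$ and the subtracted term becomes $n(2n-2)!!$; for $d=3$ the first term becomes $\tfrac{n(2n+1)}{3}(2n-1)!!$ and the subtracted term becomes $n^2(2n-2)!!$, yielding the two claimed formulas. I expect the only real obstacle to be bookkeeping: keeping the powers of two straight in the double-factorial conversions, and in particular confirming the sign of $[z^n]f_d(z)X$ directly from $f_2(X)=\tfrac12 X^{-3}-\tfrac12 X^{-1}$ (resp.\ $f_3(X)=\tfrac32 X^{-5}-\tfrac32 X^{-3}$), since a sign slip there would incorrectly flip the minus sign in the final expressions.
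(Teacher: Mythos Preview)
Your proposal is correct and follows essentially the same route as the paper: substitute $f_0=(1-X)/2$ into (\ref{TCn1}), read off $[z^n]f_d$ directly, express $f_d(z)X$ as a linear combination of even powers $X^{-m}$ via the explicit forms of $f_2,f_3$, and extract coefficients using the second lemma. Your sign bookkeeping is in fact cleaner than the paper's displayed computation for $d=2$ (where the two summands of $f_2$ appear with swapped signs, though the final answer is of course correct).
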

\begin{rem}
The formula for $d=2$ is known; see, e.g., \cite{CaZh,FuGiMa2,PoBa}.
\end{rem}

\begin{rem}
The method of proof gives the following structural result for general $d$ and $n \geq 2$:
\begin{align*}
\mathrm{TC}_{n,1}^{(d)}=\binom{2n+d-2}{d} (2n-3)!! - p_d(n) (2n-2)!!,
\end{align*}
where $p_d(n)$ is a polynomial of degree $d-1$.
Note that $\mathrm{TC}_{1,1}^{(d)}=0$.
\end{rem}

Likewise, we can also simplify (\ref{TCn2}) for small values of $d$.

\begin{cor}
The number of tree-child networks with $n$ leaves and $2$ reticulation nodes is
\begin{itemize}
\item[(i)] for $d=2$:
\begin{align*}
\mathrm{TC}_{n,2}^{(2)}=n(n-1)\left( \frac{3n+2}{3}\,(2n-1)!! - (2n)!! \right);
\end{align*}
\item[(ii)] for $d=3$:
\begin{align*}
\mathrm{TC}_{n,2}^{(3)}= n(n-1)\left( \frac{70n^2+244n+177}{315}\,(2n+1)!! - \frac{16n+13}{48}\,(2n+2)!! \right).
\end{align*}
\end{itemize}
\end{cor}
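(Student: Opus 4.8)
The plan is to mirror the proof of the preceding corollary for $\mathrm{TC}_{n,1}^{(d)}$, reducing every quantity to coefficient extractions of negative powers of $X=\sqrt{1-4z}$. I would begin from the generating-function identity~(\ref{TCn2}) and replace each factor $f_j$ by its closed form. Iterating the recurrence of Lemma~\ref{rec-fdX} starting from $f_0(X)=(1-X)/2$ expresses every $f_j$ with $j\ge 2$ as a finite linear combination of odd negative powers $X^{-m}$; concretely, for $d=2$ I need $f_1,f_2,f_3,f_4$ and for $d=3$ I need $f_3,f_4,f_5,f_6$, all reachable in boundedly many steps of the recurrence.

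I would then expand the two products occurring in~(\ref{TCn2}). By Lemma~\ref{rec-fdX} each $f_j$ with $j\ge 2$ is a combination of odd negative powers $X^{-m}$, while $f_0=(1-X)/2$ and $f_1=(X^{-1}-1)/2$ are given explicitly; multiplying everything out, the products $f_{2d-\ell}f_\ell f_0$ and $f_{2d}f_0^2$ become explicit linear combinations of powers $X^{-m}$ in which \emph{both} parities of $m$ occur. Applying the second technical lemma termwise — the even-$m$ formula producing the factor $4^n$ and the odd-$m$ formula producing the more intricate product of binomials — and then multiplying by the prefactors $n!/(d!\,2^{n-3})$ and $n!/((d!)^2 2^{n-2})$, each contribution collapses, via $(2n-1)!!=(2n)!/(2^n n!)$, $(2n)!!=2^n n!$ and their shifts, into a double-factorial expression. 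Collecting the pieces should yield the two claimed closed forms, with the odd-power terms assembling into the $(2n-1)!!$ (resp.\ $(2n+1)!!$) contribution and the even-power terms into the $(2n)!!$ (resp.\ $(2n+2)!!$) contribution.

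The main obstacle is bookkeeping rather than any new idea: for $d=3$ the products spread over many powers $X^{-m}$ and the rational polynomial prefactors have larger degree, so one must carefully track which terms feed the odd-power extraction and which feed the even-power extraction, and then verify that the accumulated coefficients collapse exactly to the stated rational polynomials $(70n^2+244n+177)/315$ and $(16n+13)/48$. I would perform the final collection with computer algebra. A convenient end check is that both formulas vanish at $n=1$ and reproduce the tabulated values $\mathrm{TC}_{3,2}^{(2)}=42$, $\mathrm{TC}_{4,2}^{(2)}=1272$ and $\mathrm{TC}_{3,2}^{(3)}=150$, $\mathrm{TC}_{4,2}^{(3)}=7908$ from Appendix~\ref{App-A}.
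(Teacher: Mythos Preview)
Your proposal is correct and follows exactly the approach the paper intends: the paper does not write out a proof for this corollary, but from context it is the straightforward continuation of the $k=1$ computation, namely expand~(\ref{TCn2}) via Lemma~\ref{rec-fdX}, extract coefficients with the second technical lemma, and simplify. One small slip: for $d=3$ you also need $f_1$ and $f_2$ (for the $\ell=1,2$ terms in the sum), not only $f_3,\dots,f_6$; but these are already listed after Lemma~\ref{rec-fdX}, so this does not affect the argument.
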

\begin{rem}
The formula for $d=2$ is again known; see, e.g., \cite{CaZh,FuGiMa2,PoBa}.
\end{rem}

\subsection{Asymptotics for Fixed \texorpdfstring{$k$}{k}}\label{asymp-fixed-k}

In this subsection, we give a final application of the method of component graphs, namely, we derive the first-order asymptotics of $\mathrm{TC}_{n,k}^{(d)}$ for fixed $k$ as $n\rightarrow\infty$. This extends the main result from \cite{FuHuYu} to general $d$.

The main observation of \cite{FuHuYu} was that the main asymptotic contribution to the asymptotics of $\mathrm{TC}_{n,k}^{(d)}$ comes from the tree-child networks constructed from the star-component graph with $k$ leaves; see Figure~\ref{star-component}. (In fact, since component graphs are labeled, there are $k+1$ star-component graphs depending on the label of the root vertex.)

\begin{figure}[t]
\centering
\includegraphics[scale=1.3]{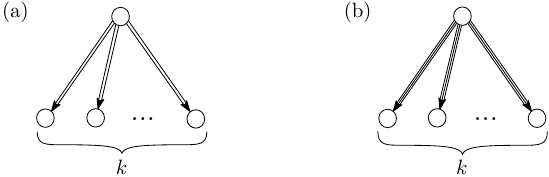}
\caption{Star-component graphs for generating the tree-child networks whose numbers dominate the asymptotics of $\mathrm{TC}_{n,k}^{(d)}$ for fixed $k$ as $n$ tends to infinity. (Left: $d=2$; Right: $d=3$. Labels of nodes are removed.)}\label{star-component}
\end{figure}

We denote by $S_{n,k}^{(d)}$ the number of tree-child networks arising from the star-component graph(s) with $k$ leaves. Then, we have the following formula for this number which generalizes the formula from \cite[Lemma~5]{FuHuYu} for $d=2$.

\begin{lmm}\label{Snk-d}
We have,
\[
S_{n,k}^{(d)}=\frac{n!}{(d!)^{k}2^{n-k-1}(k-1)!}\sum_{j=1}^{n-k}\frac{(2j+dk-2)!}{j!(j-1)!}\cdot\frac{(2n-k-2j-1)!}{(n-k-j)!(n-j)!}.
\]
\end{lmm}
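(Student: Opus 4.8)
The plan is to specialize the component-graph formula~(\ref{formula-TC-2}) to the star-component graphs and then evaluate the resulting double sum explicitly. First I would record the out-degree data of a star-component graph with $k$ leaves. Writing $r$ for its (unique) root vertex, the root satisfies $g_{r,\ell}=d$ for each of the $k$ leaves $\ell$, hence $g_r=dk$, while every leaf $\ell$ has $g_\ell=0$. Substituting this into the product in~(\ref{formula-TC-2}) collapses the per-node factors to
\[
\frac{(2b_r+dk-2)!}{(b_r-1)!\,(d!)^{k}}\prod_{\ell\neq r}\frac{(2b_\ell-2)!}{(b_\ell-1)!},
\]
since the root contributes $\prod_\ell (g_{r,\ell})!=(d!)^k$ in its denominator and each leaf $\ell$ contributes $(2b_\ell-2)!/(b_\ell-1)!$.

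Next I would reorganize the outer sum. Summing over all set partitions together with the $k+1$ labelled star-component graphs (one per choice of root label) is, after unwinding the order-consistent labelling convention, the same as independently choosing the root block $A$ and then a set partition of its complement into $k$ nonempty blocks; this is a genuine bijection because the root block determines its own label via the smallest-element ordering, so no spurious multiplicity from the (symmetric but distinctly labelled) leaves is introduced. Writing $j=|A|$, which ranges over $1\le j\le n-k$ because each of the $k$ leaf blocks must be nonempty, there are $\binom{n}{j}$ choices of $A$, the root factor depends only on $j$, and the leaf contribution depends only on $n-j$.

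The key analytic step is to evaluate the weighted block sum. Setting $w(m)=(2m-2)!/(m-1)!$, so that $\sum_{m\geq1} w(m)x^m/m!=f_0(x)$ with $f_0$ as in Lemma~\ref{rec-fdX}, the exponential formula gives that the sum of $\prod_C w(|C|)$ over all partitions of a set of size $n-j$ into exactly $k$ blocks equals $(n-j)!\,[x^{n-j}]f_0(x)^k/k!$. Because $f_0(x)=xC(x)$ with $C$ the Catalan generating function, Lagrange inversion yields the closed form $[x^{s}]f_0(x)^k=\tfrac{k}{2s-k}\binom{2s-k}{s-k}=\tfrac{k\,(2s-k-1)!}{(s-k)!\,s!}$, which at $s=n-j$ produces the factor $(2n-k-2j-1)!/((n-k-j)!\,(n-j)!)$.

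Finally I would assemble the pieces. Writing $\binom{n}{j}=n!/(j!\,(n-j)!)$ and $k/k!=1/(k-1)!$, the factor $(n-j)!$ created by the exponential formula cancels against the one in the denominator of the Catalan-power coefficient, and collecting the remaining constants reproduces the claimed identity (with $j$ the block size of the root tree-component). I expect the main obstacle to be the bookkeeping in the reorganization step—correctly handling the labelling convention so that the $k+1$ star graphs and the set partitions recombine without over- or undercounting—and ensuring the normalization is exactly right, so that the $k!$ from the unordered SET and the extra factor $k$ in the Catalan-power coefficient combine precisely to the $(k-1)!$ in the denominator of the statement.
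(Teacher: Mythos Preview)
Your proof is correct and follows essentially the same approach as the paper: both identify $j$ as the size of the root block, isolate the root factor $(2j+dk-2)!/((j-1)!(d!)^k)$, reduce the remaining contribution to counting an unordered forest of $k$ phylogenetic trees on $n-j$ leaves, and evaluate that coefficient via Lagrange inversion. The only cosmetic difference is that the paper phrases the decomposition as ``one-component network with $j+k$ leaves followed by replacing the $k$ reticulation leaves by phylogenetic trees'' and works with $T(z)=1-\sqrt{1-2z}$, whereas you specialize the general component-graph formula~(\ref{formula-TC-2}) and work with $f_0(z)=\tfrac12 T(2z)$; the computations are equivalent.
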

\begin{proof}
The proof is very similar to the one of \cite[Lemma~5]{FuHuYu}. We give a short sketch.

First, by a combinatorial argument, we have
\[
S_{n,k}^{(d)}=\sum_{j=1}^{n-k}\binom{n}{j}\frac{(2j+dk-2)!}{(d!)^k2^{j-1}(j-1)!}\cdot\frac{1}{k!}(n-j)![z^{n-j}]T(z)^k,
\]
where
\[
T(z)=\sum_{n\geq 1}(2n-1)!!\cdot\frac{z^n}{n!}=1-\sqrt{1-2z}
\]
is the exponential generating function of the number of phylogenetic trees.

Briefly, the construction on which this combinatorial argument is based works as follows: first, we pick a one-component tree-child network with $j+k$ leaves where the leaves with labels $\{1,\ldots,k\}$ are the ones below the reticulation nodes. Then, we replace the leaves below reticulation nodes by phylogenetic trees. (For this, we need a forest of $k$ phylogenetic trees.) Finally, we re-label the leaves.

Next, by a standard application of the Lagrange inversion formula, we obtain
\[
[z^{n-j}]T(z)^k=\frac{k}{n-j}2^{j+k-n}\binom{2n-k-2j-1}{n-k-j}.
\]
Plugging this into the expression above and straightforward manipulations yield the claimed result.
\end{proof}

Applying to this the Laplace method, we have the following asymptotic result.

\begin{pro}
For fixed $k$, as $n\rightarrow\infty$,
\[
S^{(d)}_{n,k}\sim\frac{2^{dk-1}}{(d!)^{k}\,k!\sqrt{\pi}}n!2^nn^{dk-3/2}.
\]
\end{pro}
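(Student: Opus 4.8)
The plan is to read off the first-order asymptotics directly from the exact formula in Lemma~\ref{Snk-d} by identifying the dominant range of the summation index and then summing an explicit convergent series. First I would substitute $m=n-k-j$, which splits the summand into two factors with a pleasant structure: the second factor becomes \emph{exactly} $\frac{(2m+k-1)!}{m!(m+k)!}$, independent of $n$, while the first becomes $c_m^{(n)}:=\frac{(2N+dk-2)!}{N!(N-1)!}$ with $N:=n-k-m$. Writing $P_n:=\frac{n!}{(d!)^{k}2^{n-k-1}(k-1)!}$, Lemma~\ref{Snk-d} then reads
\[
S_{n,k}^{(d)}=P_n\sum_{m=0}^{n-k-1}c_m^{(n)}\,\frac{(2m+k-1)!}{m!(m+k)!}.
\]

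Next I would apply Stirling's formula to $c_m^{(n)}$. Using $c_m^{(n)}=N\cdot\frac{(2N+dk-2)!}{N!\,N!}$ and $\frac{(2N+dk-2)!}{(2N)!}\sim(2N)^{dk-2}$, one obtains for each fixed $m$, as $N=n-k-m\to\infty$,
\[
c_m^{(n)}\sim\frac{2^{dk-2}}{\sqrt{\pi}}\,4^{N}N^{dk-3/2}\sim K_n\,4^{-m},\qquad K_n:=\frac{2^{dk-2}}{\sqrt{\pi}}\,4^{n-k}n^{dk-3/2},
\]
since $4^{N}=4^{n-k}4^{-m}$ and $(n-k-m)^{dk-3/2}\sim n^{dk-3/2}$. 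Hence the $m$-th term of the normalized sum $S_{n,k}^{(d)}/(P_nK_n)$ converges pointwise to $\frac{4^{-m}(2m+k-1)!}{m!(m+k)!}$.

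The step I expect to be the main obstacle is upgrading this pointwise convergence to convergence of the whole sum (the rigorous form of the Laplace/concentration heuristic), which requires a summable dominating bound uniform in $n$. Two-sided Stirling estimates give $c_m^{(n)}\le C\,4^{N}N^{dk-3/2}$ for all $N\ge1$, whence $c_m^{(n)}/K_n\le C'4^{-m}\big(\tfrac{n-k-m}{n}\big)^{dk-3/2}\le C'4^{-m}$; the last inequality uses that $dk\ge2$, so the exponent $dk-3/2>0$ and $\tfrac{n-k-m}{n}\le1$. Combined with the elementary bound $\frac{(2m+k-1)!}{m!(m+k)!}\le C''4^{m}(m+1)^{-3/2}$, the general term is dominated by $C'''(m+1)^{-3/2}$, which is summable. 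Dominated convergence then yields
\[
\frac{S_{n,k}^{(d)}}{P_nK_n}\longrightarrow\Sigma:=\sum_{m\ge0}\frac{4^{-m}(2m+k-1)!}{m!(m+k)!}.
\]

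It remains to evaluate $\Sigma$ and collect constants. Writing $\frac{(2m+k-1)!}{m!(m+k)!}=\frac{1}{2m+k}\binom{2m+k}{m}$ and invoking the classical Catalan-power identity $C(z)^{k}=\sum_{m\ge0}\frac{k}{2m+k}\binom{2m+k}{m}z^{m}$, where $C(z)=\frac{1-\sqrt{1-4z}}{2z}$ (so that $C(z)=f_0(z)/z$ in the notation of Lemma~\ref{rel-Sackin-P}'s companion Lemma~\ref{rec-fdX}), I would set $z=\tfrac14$. Since $C(\tfrac14)=2$ and the series converges at the boundary (its terms are $\Theta(m^{-3/2})$, so Abel's theorem applies), this gives $\Sigma=\frac{1}{k}\,C(\tfrac14)^{k}=\frac{2^{k}}{k}$. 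Finally, substituting $K_n$ and $\Sigma=2^k/k$ into $S_{n,k}^{(d)}\sim P_nK_n\Sigma$, using $(k-1)!\,k=k!$ and combining the powers of two (the exponents sum to $-(n-k-1)+(dk-2)+2n-2k+k=n+dk-1$), produces exactly
\[
S_{n,k}^{(d)}\sim\frac{2^{dk-1}}{(d!)^{k}\,k!\sqrt{\pi}}\,n!\,2^{n}n^{dk-3/2},
\]
as claimed. Everything besides the uniform dominating bound is routine bookkeeping with Stirling's formula and a single known generating-function evaluation.
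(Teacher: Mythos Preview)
Your proposal is correct and follows essentially the same route as the paper: the same substitution $m=n-k-j$, the same Stirling expansion of the large factor, and the same Laplace/dominated-convergence argument. The only noteworthy difference is that you evaluate the tail series $\sum_{m\ge0}\frac{4^{-m}(2m+k-1)!}{m!(m+k)!}=2^{k}/k$ directly via the Catalan-power identity and Abel's theorem, whereas the paper simply quotes this identity from \cite[Lemma~7]{FuHuYu}; your derivation is self-contained, and your explicit dominating bound (using $dk\ge 2$) makes the Laplace step more transparent than the paper's sketch.
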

\begin{proof}
The proof is similar to \cite[Lemma~6]{FuHuYu}. Again, we just give a sketch.

First, it suffices to consider the asymptotics of the sum in the expression for $S_{n,k}^{(d)}$ from Lemma~\ref{Snk-d} as the factor in front of it has already the right shape. Thus, we set:
\begin{align*}
\Sigma_{n,k}&:=\sum_{j=1}^{n-k}\frac{(2j+dk-2)!}{j!(j-1)!}\cdot\frac{(2n-k-2j-1)!}{(n-k-j)!(n-j)!}\\
&=\sum_{j=0}^{n-k-1}\frac{(2n+(d-2)k-2j-2)!}{(n-k-j)!(n-k-j-1)!}\cdot\frac{(2j+k-1)!}{j!(j+k)!}.
\end{align*}

Now, observe that the first term in the last sum is decreasing in $j$ and has the expansion:
\[
\frac{(2n+(d-2)k-2j-2)!}{(n-k-j)!(n-k-j-1)!}=\frac{2^{(d-2)k}}{\sqrt{\pi}}4^{n-j-1}n^{dk-3/2}\left(1+\mathcal{O}\left(\frac{1+j}{n}\right)\right)
\]
uniformly in $j$ as $j=o(n)$. Thus, by a standard application of the Laplace method:
\[
\Sigma_{n,k}\sim\frac{2^{(d-2)k}}{\sqrt{\pi}}\left(\sum_{j=0}^{\infty}\frac{(2j+k-1)!}{ j!(j+k)!}4^{-j}\right)4^{n-1}n^{dk-3/2}=\frac{2^{(d-1)k}}{k\sqrt{\pi}}4^{n-1}n^{dk-3/2},
\]
where we used \cite[Lemma~7]{FuHuYu} in the last step.

From the above asymptotics multiplied with the factor in front of the sum in the formula for $S_{n,k}^{(d)}$ from Lemma~\ref{Snk-d}, we obtain the claimed result.
\end{proof}

Finally, using the same arguments as in \cite{FuHuYu}, we can show that also here the contribution from tree-child networks arising from the star-component graph(s) dominates; we leave details to the reader.

\begin{thm}
For the number of $d$-combining tree-child networks with $n$ leaves and $k$ reticulation nodes, we have for fixed $k$, as $n\rightarrow\infty$,
\[
\mathrm{TC}^{(d)}_{n,k} \sim \frac{2^{dk-1}}{(d!)^{k}\,k!\,\sqrt{\pi}}\,n!\,2^n\,n^{dk-3/2}.
\]
\end{thm}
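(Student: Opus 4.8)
The plan is to prove the statement by showing that the first-order asymptotics of $\mathrm{TC}^{(d)}_{n,k}$ is entirely carried by the star-component graphs, i.e.\ that $\mathrm{TC}^{(d)}_{n,k}\sim S^{(d)}_{n,k}$; the claim then follows from the preceding proposition, which already gives the asymptotics of $S^{(d)}_{n,k}$. The natural starting point is the exact formula~\eqref{formula-TC-2}, which writes $\mathrm{TC}^{(d)}_{n,k}$ as a sum over set partitions $\{B_j\}_{j=1}^{k+1}\in\prod_{n,k+1}$ and component graphs $G\in\mathcal{K}^{(d)}_{k+1}$. Because $k$ is fixed, $\vert\mathcal{K}^{(d)}_{k+1}\vert$ is a constant, so it suffices to treat each $G$ separately and to compare the resulting contributions
\[
T_G(n):=\frac{1}{2^{n-k-1}}\sum_{\{B_j\}\in\prod_{n,k+1}}\prod_{j=1}^{k+1}\frac{(2b_j+g_j-2)!}{(b_j-1)!\,\prod_{\ell=1}^{k+1}(g_{j,\ell})!},
\]
where $g_j$ is the out-degree of node $j$ in $G$ and $b_j=\vert B_j\vert$.

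First I would record the structural identity $\sum_{j=1}^{k+1}g_j=dk$, valid for every $G$, since the number of edges equals the sum of the in-degrees, which is $dk$ as each of the $k$ non-root nodes has in-degree $d$. Next, for a fixed $G$ I would evaluate $T_G(n)$ by the Laplace method exactly as in the proof of the preceding proposition (following \cite{FuHuYu}): grouping partitions by their block-size profile turns the partition sum into a sum over $b_1+\dots+b_{k+1}=n$ weighted by the multinomial $n!/(b_1!\cdots b_{k+1}!)$, and the dominant profiles are those in which one block, assigned to some node $j_0$, has size $b_{j_0}=n-O(1)$ while the other $k$ blocks remain bounded. Using Stirling's formula in the form
\[
\frac{(2b+g-2)!}{(b-1)!\,b!}\sim\frac{2^{g-2}}{\sqrt{\pi}}\,4^{b}\,b^{g-3/2}\qquad(b\to\infty),
\]
the large block at $j_0$ produces the factor $n!\,2^{n}\,n^{g_{j_0}-3/2}$, while the bounded blocks and the multiplicities $g_{j,\ell}!$ contribute only to the constant; summing over the choice of $j_0$ gives $T_G(n)=\Theta\!\bigl(n!\,2^{n}\,n^{g_{\max}(G)-3/2}\bigr)$ with $g_{\max}(G):=\max_{1\le j\le k+1}g_j$.

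It then remains to maximize $g_{\max}(G)$ over $\mathcal{K}^{(d)}_{k+1}$. Since $\sum_j g_j=dk$, one always has $g_{\max}(G)\le dk$, with equality precisely when a single node emits all $dk$ edges. Because the root is the unique node of in-degree $0$, no edge enters it; hence a node of out-degree $dk$ must send $d$ parallel edges to each of the remaining $k$ nodes while receiving none, which forces it to be the root and $G$ to be a star-component graph. Thus $g_{\max}(G)=dk$ holds only for the $k+1$ star-component graphs, whose combined contribution is exactly $S^{(d)}_{n,k}$, and every other $G$ has $g_{\max}(G)\le dk-1$, so that $T_G(n)=O\!\bigl(n!\,2^{n}\,n^{dk-1-3/2}\bigr)=o\!\bigl(S^{(d)}_{n,k}\bigr)$. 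Summing over the finitely many component graphs yields $\mathrm{TC}^{(d)}_{n,k}\sim S^{(d)}_{n,k}$, and the preceding proposition finishes the proof. I expect the main obstacle to be making the Laplace evaluation of $T_G(n)$ uniform, both over the placement $j_0$ of the large block and over the interaction between the block sizes and the out-degree distribution of $G$, so that the order $n!\,2^{n}\,n^{g_{\max}(G)-3/2}$ can be rigorously extracted; this is the technical heart already developed in \cite{FuHuYu}, with the remaining steps being routine bookkeeping.
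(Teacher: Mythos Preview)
Your proposal is correct and follows exactly the approach the paper indicates (which simply defers to \cite{FuHuYu}): show that $\mathrm{TC}^{(d)}_{n,k}\sim S^{(d)}_{n,k}$ by arguing via \eqref{formula-TC-2} that every non-star component graph $G$ satisfies $g_{\max}(G)\le dk-1$ and hence contributes only $O(n!\,2^n n^{dk-5/2})$, then invoke the preceding proposition. Your identification of the star graphs as the unique maximizers of $g_{\max}$ and your outline of the Laplace-method estimate for $T_G(n)$ match the argument of \cite{FuHuYu} that the paper is citing.
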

\begin{rem}
This result was stated in \cite[Theorem~8]{ChFuLiWaYu} without proof. (Note that this also corrects two typos in the statement of \cite[Theorem~8]{ChFuLiWaYu}.)
\end{rem}

\section{Proofs of Propositions~\ref{lem:AiryXLower} and \ref{lem:AiryXUpper}}\label{App-C}

    The proofs follow nearly verbatim the steps of \cite[Lemmas~4.2 and 4.4]{ElFaWa} as well as \cite[Lemmas~7 and 9]{ElFaWa2020}.
    For the convenience of the reader, we repeat the main steps for Proposition~\ref{lem:AiryXLower} and point out the main differences.
    Full details of all computations are given in our accompanying Maple worksheet~\cite{Wa}.

    We start by defining the following sequence
    \begin{align*}
        P_{n,m} :=
            -{Z}_{n,m}{s}_{n}
            +\mu_{n,m}^{(d)} {Z}_{n-1,m+1}
            + \nu_{n,m}^{(d)} {Z}_{n-1,m-1},
    \end{align*}
    where we use the ansatz
    \begin{align*}
          s_n&:= \sigma_0 + \frac{\sigma_1}{n^{1/3}} + \frac{\sigma_2}{n^{2/3}}+\frac{\sigma_3}{n} + \frac{\sigma_4}{n^{7/6}}\\
           Z_{n,m}&:= \left(1+\frac{\tau_2 m^2+\tau_1 m}{n}\right)\Ai\left(a_{1}+\frac{\bb^{1/3}(m+1)}{n^{1/3}}\right),
    \end{align*}
    with parameters $\sigma_i, \tau_j \in\mathbb{R}$, and $\bb$ from~\eqref{eq:fansatz}.
    Note that in \cite{ElFaWa,ElFaWa2020} we had $B=2$, whereas here $B$ depends on $d$.
	Then the claimed inequality is equivalent to $P_{n,m} \geq 0$ with the parameters chosen accordingly.
	
	To prove it, we expand the Airy function $\Ai(z)$ in a neighborhood of
	\begin{align}
		\label{eq:Pnmexpansionlow}
		\aiarg = a_{1}+\frac{\bb^{1/3} m}{n^{1/3}},
	\end{align}
	and, due to the defining differential equation $\Ai''(x) = x \Ai(x)$, we get the following expansion
	\begin{align*}
		P_{n,m} &=  p_{n,m} \Ai(\aiarg) + p'_{n,m} \Ai'(\aiarg),
	\end{align*}
	where $p_{n,m}$ and $p'_{n,m}$ are functions of $m$ and $n^{-1}$ and may be expanded as power series in $n^{-1/6}$ whose coefficients are polynomials in $m$.
	As the Airy function is entire, these series converge absolutely for $n>1$ and $m<n$.
	
	Now we proceed with the technical analysis.
	First, we show that $[m^i n^{j}]P_{n,m} = 0$ for $i+j > 1$, $i,j \in \mathbb{Q}$.
	We omit this step, as it is analogous to the previous cases.
	Second, we use computer algebra to strengthen this result by choosing suitable values $\sigma_i$ and $\tau_j$ to eliminate more terms; see Figure~\ref{fig:PosLowerBound1}.
	A solid diamond at $(i,j)$ indicates that the coefficient $[m^in^j]P_{n,m}$ is non-zero for generic values of $\sigma_i$ and $\tau_j$;
	an empty diamond indicates that the specific choice of $\sigma_i$ and $\tau_j$ makes it vanish.
	The convex hull is formed by the following three lines
	\begin{align*}
		L_1 &: j = -\frac{7}{6} - \frac{7i}{18}, &
		L_2 &: j = -\frac{1}{3} - \frac{2i}{3}, &
		L_3 &: j = 1 - i.
	\end{align*}
	
	\begin{figure}
		\centering
		\includegraphics[width=0.48\textwidth]{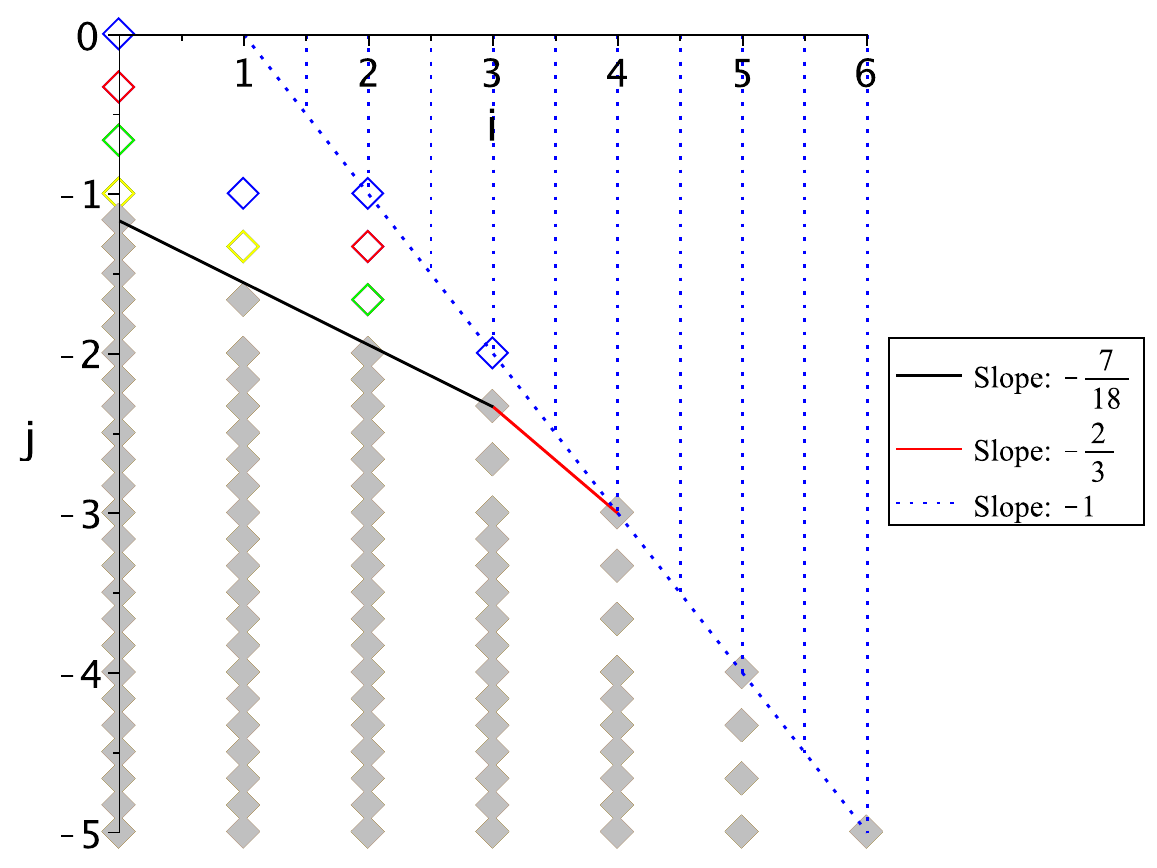}%
		\quad
		\includegraphics[width=0.48\textwidth]{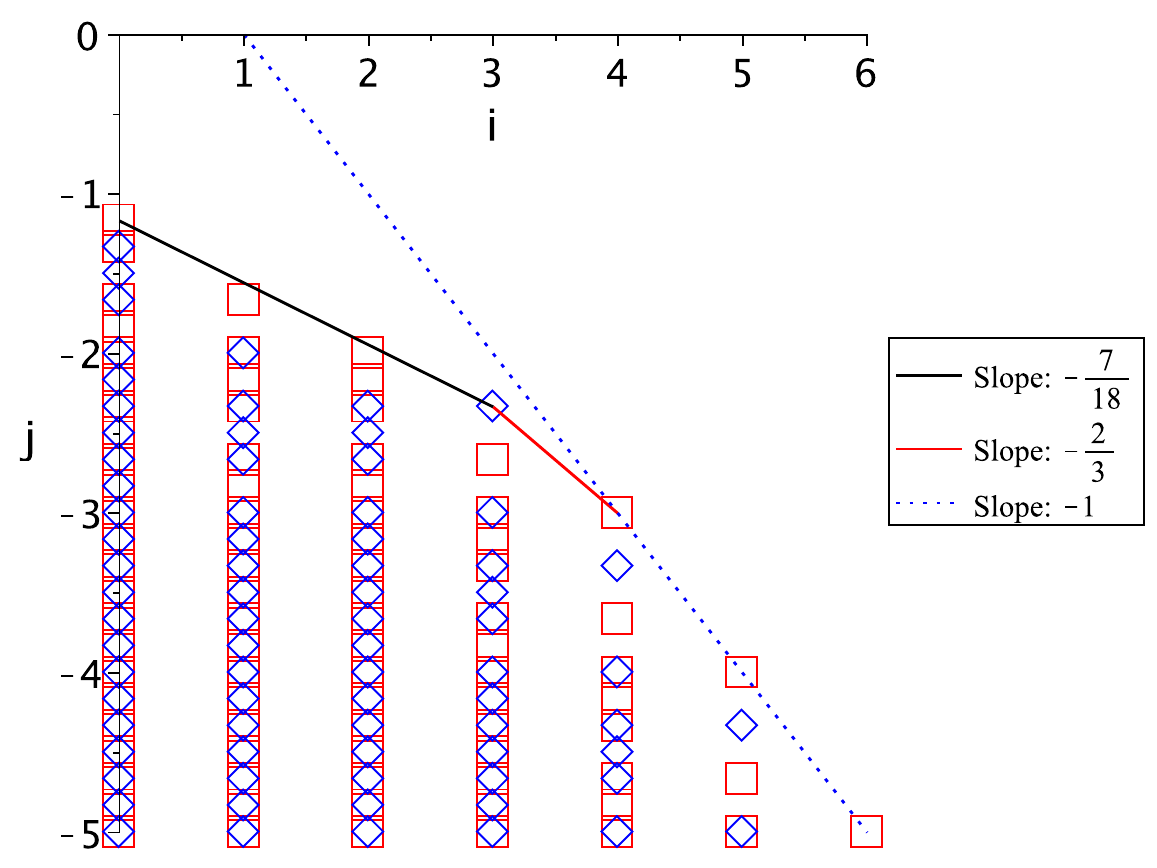}%
		\caption{(Left) Non-zero coefficients of $P_{n,m} = \sum {a_{i,j}} m^i n^j$ shown by diamonds for $s_n := \sigma_0 + \frac{\sigma_1}{n^{1/3}} + \frac{\sigma_2}{n^{2/3}}+\frac{\sigma_3}{n} + \frac{\sigma_4}{n^{7/6}}$ and $Z_{n,m} := \left(1+\frac{\tau_2 m^2+\tau_1 m}{n}\right)\Ai\left(a_{1}+\frac{2^{1/3}(m+1)}{n^{1/3}}\right)$. There are no terms in the blue dashed area. The blue terms vanish for $\sigma_0=2$, the red terms vanish for $\sigma_1=0$, the green terms vanish for $\sigma_2=B^{2/3}a_1$, and the yellow terms vanish for
		$\sigma_3=-\frac{3d^2-5d+4}{3(d+1)}$ and $\tau_2=-\frac{2d-1}{3(d+1)}$.
		The black, red, and blue lines represent the parts $L_1$, $L_2$, and $L_3$, respectively, of the convex hull.
		(Right) The solid gray diamonds are decomposed into the coefficients $p_{n,m}$ of $\Ai(\alpha)$ (red boxes) and $p'_{n,m}$ of $\Ai'(\alpha)$ (blue diamonds).}
		\label{fig:PosLowerBound1}
	\end{figure}
	
	In a final step, we distinguish between $p_{n,m}$ and $p'_{n,m}$; see Figure~\ref{fig:PosLowerBound2}.
	The expansions for $n$ tending to infinity start as follows, where the elements on the convex hull are written in color:
	\begin{align*}
		&P_{n,m} =\Ai(\alpha) \left(
			\textcolor{red}{-\frac{\sigma_4}{n^{7/6}}}
			- \frac{B^{5/3} a_1 m}{3 n^{5/3}}
			\textcolor{red}{- \frac{(23d^2-14d+5) m^2}{9(d+1)^2 n^2}}
			- \frac{2(2d-1)(3d-1) B^{5/3} a_1 m^3}{9 (d+1)^2 n^{8/3}}
			\right. \\
			& \left. \qquad \qquad \qquad \quad
			\textcolor{red}{- \frac{(2d-1)(23d-9) B m^4}{18(d+1)^2 n^3}
			+ \frac{(2d - 1)(209d^2 - 258d + 129) B m^5}{270 (d+1)^3 n^4}}
			+  \ldots
					\right) + \\
			    &~\Ai'(\alpha) \left(\!
			\textcolor{blue}{- \frac{B^{1/3}\sigma_4}{n^{3/2}}}
			\!-\! \frac{4 (d - 2) B a_1 m}{9(d+1) n^2}
			\!-\! \frac{2(9d^3 + 50d^2 - 67d + 21) B^{1/3} m^2}{9 (d+1)^2 n^{7/3}}\textcolor{blue}{- \frac{ 4(2d-1)^2 B^{1/3} m^3} {9(d+1)^2 n^{7/3}}}
			 \right.\\
			& \left. \qquad \qquad \textcolor{blue}{- \frac{ (2d-1)(5d-1) B^{4/3} m^4}{18(d+1)^2 n^{10/3}}}\textcolor{blue}{- \frac{(2d-1)(119d^2+32d-51)B^{4/3} m^5}{270 (d+1)^3 n^{13/3}}}
			+ \ldots
			\right).
	\end{align*}	
	
	\begin{figure}
		\centering
		\includegraphics[width=0.48\textwidth]{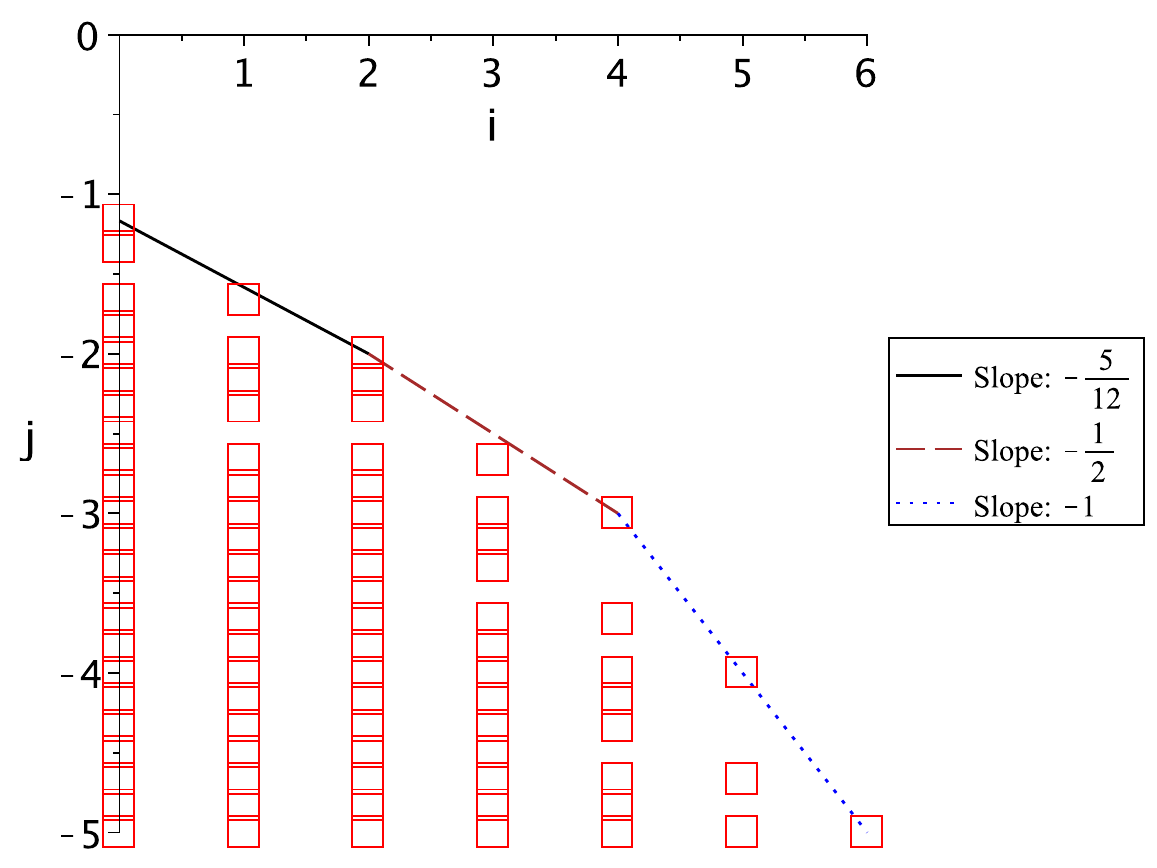}%
		\quad
		\includegraphics[width=0.48\textwidth]{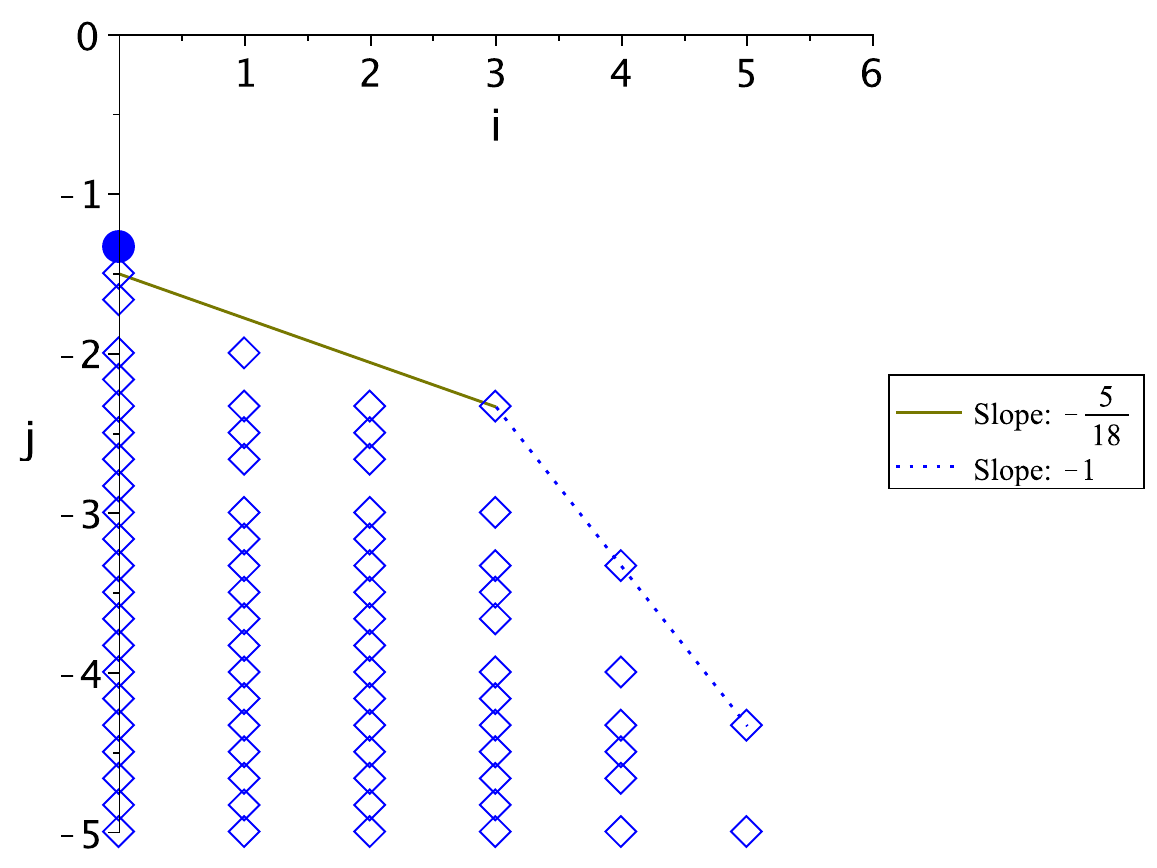}%
		\caption{Non-zero coefficients $p_{n,m} = \sum \tilde{a}_{i,j} m^i n^j$ (red) and $p'_{n,m} = \sum \tilde{a}_{i,j}' m^i n^j$ (blue) of the expansion~\eqref{eq:Pnmexpansionlow} for $P_{n,m}$. The coefficient of $n^{-4/3}$ in the right picture depicted as a solid blue circle disappears for $\tau_1=\frac{3d^2+12d-11}{6(d+1)}$.}
		\label{fig:PosLowerBound2}
	\end{figure}
	
	We now choose $\sigma_4=-1$ which leads to a positive term $\Ai(\alpha)n^{-7/6}$.
	Next, for fixed (large) $n$ we prove that for all $m$ the dominant contributions in $P_{n,m}$ are positive.
	Motivated by Figures~\ref{fig:PosLowerBound1} and \ref{fig:PosLowerBound2}, we consider three different regimes:
	\[
	\text{(i)}\quad m \leq C n^{1/3};\qquad\text{(ii)}\quad C n^{1/3} < m \leq n^{7/18};\qquad\text{(iii)}\quad n^{7/18} < m <n^{2/3-\epsilon}
	\]
	for a suitable constant $C>0$.
	This part is analogous to the one of \cite[Lemma~4.2]{ElFaWa}, which is why we omit the technical details.
	In the end we get that there exists an $N>0$ such that all terms are positive for $n > N$ and all $m < n^{2/3}$, which ends the proof of Proposition~\ref{lem:AiryXLower}.
	
	The proof of Proposition~\ref{lem:AiryXUpper} follows analogously.
 \qedhere

\end{document}